\newcommand{\comment}[1]{}
\newcommand{\al}{\alpha}
\newcommand{\be}{\beta}
\newcommand{\ga}{\gamma}
\newcommand{\de}{\delta}
\newcommand{\ep}{\epsilon}
\newcommand{\la}{\lambda}
\newcommand{\om}{\omega}
\newcommand{\si}{\sigma}
\newcommand{\bbe}{\overline{\beta}}
\newcommand{\De}{\Delta}
\newcommand{\Ga}{\Gamma}
\newcommand{\La}{\Lambda}
\newcommand{\Om}{\Omega}
\renewcommand{\bar}{\overline}
\renewcommand{\tilde}{\widetilde}
\renewcommand{\hat}{\widehat}
\newcommand{\Z}{\mathbb{Z}}
\newcommand{\Q}{\mathbb{Q}}
\newcommand{\C}{\mathbb{C}}
\newcommand{\K}{\mathbb{K}}
\newcommand{\LL}{\mathbb{L}}
\newcommand{\W}{\mathbb{W}}
\newcommand{\HH}{\mathbb{H}}
\renewcommand{\H}{\mathcal{H}}
\newcommand{\cF}{\mathcal{F}}
\newcommand{\cT}{\mathcal{T}}
\newcommand{\hcF}{\hat{\cF}}
\newcommand{\hcT}{\hat{\cT}}
\newcommand{\B}{\mathcal{B}}
\newcommand{\QB}{\mathcal{QB}}
\newcommand{\QBR}{\overleftarrow{\mathcal{QB}}}
\newcommand{\QBG}{\mathrm{QBG}}
\newcommand{\fg}{\mathfrak{g}}
\newcommand{\fh}{\mathfrak{h}}
\newcommand{\fsl}{\mathfrak{sl}}
\newcommand{\tW}{\tilde{W}}
\DeclareMathOperator{\dir}{\mathrm{dir}}
\DeclareMathOperator{\wt}{\mathrm{wt}}
\DeclareMathOperator{\qwt}{\mathrm{qwt}}
\DeclareMathOperator{\ord}{\mathrm{ord}}
\DeclareMathOperator{\ept}{\mathrm{end}}
\newcommand{\Hom}{\mathrm{Hom}}
\newcommand{\End}{\mathrm{End}}
\newcommand{\id}{\mathrm{id}}
\newcommand{\Inv}{\mathrm{Inv}}
\newcommand{\Fun}{\mathrm{Fun}}
\newcommand{\Frac}{\mathrm{Frac}}
\newcommand{\ut}{(\mathbf{u})}
\newcommand{\du}{(\mathbf{d})}
\newcommand{\twovec}[2]{\left[\begin{array}{c}#1\\#2\end{array}\right]}
\newcommand{\pair}[2]{\langle #1, #2\rangle}
\DeclareMathOperator{\ch}{ch}
\newcommand{\tX}{\tilde{X}}
\newcommand{\tY}{\tilde{Y}}
\newcommand{\tI}{\tilde{I}}
\newcommand{\tS}{\tilde{S}}
\newcommand{\hY}{\hat{Y}}
\newcommand{\uw}{\underline{w}}
\newtheorem{lem}{Lemma}
\newtheorem{thm}[lem]{Theorem}
\newtheorem{prop}[lem]{Proposition}
\newtheorem{cor}[lem]{Corollary}
\theoremstyle{remark}
\newtheorem{rem}[lem]{Remark}
\numberwithin{equation}{section}
\numberwithin{lem}{section}
\newcounter{tmp}
\newcommand{\h}{\mathfrak{h}}
\newcommand{\T}{\otimes}
\newcommand{\msl}{\mathfrak{sl}}
\newcommand{\U}{\mathrm U}
\newcommand{\bC}{{\mathbb C}}
\newcommand{\bZ}{{\mathbb Z}}
\newcommand{\fn}{{\mathfrak n}}
\begin{document}

\title[]{Generalized Weyl modules and nonsymmetric $q$-Whittaker functions}

\author{Evgeny Feigin}
\address{Evgeny Feigin:\newline
Department of Mathematics,\newline
National Research University Higher School of Economics,\newline
Vavilova str. 7, 117312, Moscow, Russia,\newline
{\it and }\newline
Tamm Theory Division, Lebedev Physics Institute
}
\email{evgfeig@gmail.com}

\author{Ievgen Makedonskyi}
\address{Ievgen Makedonskyi:\newline
Department of Mathematics,\newline
National Research University Higher School of Economics,\newline
Vavilova str. 7, 117312, Moscow, Russia
}
\email{makedonskii\_e@mail.ru}

\author{Daniel Orr}
\address{Daniel Orr:\newline
Department of Mathematics (MC 0123)\newline
460 McBryde Hall, Virginia Tech\newline
225 Stanger St.\newline
Blacksburg, VA 24061 USA}
\email{dorr@vt.edu}

\date{\today}

\begin{abstract}
We introduce generalized global Weyl modules and relate their graded characters to nonsymmetric Macdonald polynomials and nonsymmetric $q$-Whittaker functions. In particular, we show that the series part of the nonsymmetric $q$-Whittaker function is a generating function for the graded characters of generalized global Weyl modules.
\end{abstract}

\maketitle

\tableofcontents

\section{Introduction}

\begingroup
\renewcommand\thelem{\Alph{lem}}

Let $\fg$ be a finite-dimensional simple Lie algebra over $\C$. Let $X$ be the weight lattice and $W$ the Weyl group of $\fg$.
Given a Cartan decomposition $\fg=\fn_-\oplus\fh\oplus\fn_+$, let
\begin{align*}
\fn^{af} = \fg\otimes t\C[t] \oplus \fn_+\otimes 1.
\end{align*}
be the affine nilpotent subalgebra of the current algebra $\fg\otimes\C[t]$.
Generalized local Weyl modules $W_{\sigma(\la)}$ were introduced in \cite{FM3}; these are finite-dimensional
cyclic $\fn^{af}$-modules indexed by an antidominant weight $\la\in X_-$ and a Weyl group element $\sigma\in W$.
The dimension of $W_{\sigma(\la)}$ does not depend on $\sigma$, but the $\fn^{af}$-module structure of course does.
The module $W_\la$ (where $\si$ stabilizes $\la$) is a module over $\fg\otimes\C[t]$ isomorphic to the local Weyl module $W(w_0\la)$ of \cite{CP,CL,FL2}, where $w_0$ is the longest element in the Weyl group.

The main goal of the present paper is to introduce the generalized {\em global} Weyl modules and to study their connection to the theory of nonsymmetric Macdonald polynomials and nonsymmetric $q$-Whittaker functions. The global Weyl modules $\W_{\si(\la)}$ are defined as cyclic $\fn^{af}$-modules with a cyclic vector $v$ subject to the following defining relations:
\begin{align*}
(f_{\alpha}\otimes t) v=0,&\quad \forall\,\alpha \in \sigma(\Delta_-)\cap \Delta_-;\\
(e_\alpha\otimes 1) v=0,&\quad \forall\,\alpha \in \sigma(\Delta_-)\cap\Delta_+;\\
(f_{\sigma(\alpha)}\otimes t)^{-\langle \alpha^\vee, \lambda \rangle+1} v=0,&\quad \forall\,\alpha \in \Delta_+\cap \si^{-1}(\Delta_-);\\
(e_{\sigma(\alpha)}\otimes 1)^{-\langle \alpha^\vee, \lambda \rangle+1} v=0,&\quad \forall\,\alpha \in \Delta_+\cap \si^{-1}(\Delta_+).
\end{align*}
Here $\De=\De_+\cup\De_-\subset\fh^*$ are the roots of $\fg$, split into positive and negative subsets according to the Cartan decomposition.
Let $A_{\sigma(\la)}={\rm U}(\fh\T t\bC[t])v$ be the highest weight algebra of $\W_{\sigma(\la)}$.
Our main results on the modules $\W_{\si(\la)}$ can be summarized as follows:
\begin{thm}
\label{T:A}
Let $\la\in X_-$ and $\si\in W$.
\begin{enumerate}
\item The algebra $A_{\sigma(\la)}$ acts freely on $\W_{\sigma(\la)}$, with the quotient isomorphic to $W_{\sigma(\la)}$. (Theorem~\ref{T:struct}(2) and Corollary~\ref{estimate})
\item The algebra $A_{\sigma(\la)}$ does not depend on $\sigma$. In particular, $A_{\sigma(\la)}\simeq A(w_0\la)$, the classical highest weight algebra. (Proposition~\ref{Asi})
\item Suppose that $\langle\la,\al_i^\vee\rangle<0$ for some $i=1,\dots,n$. Then there exists an embedding of $\fn^{af}$-modules
$\W_{\sigma(\la+\om_i)}\to \W_{\sigma(\la)}$. The cokernel can be filtered in such a way that each subquotient is isomorphic to a generalized global
Weyl module of the form $\W_{\tau(\la+\om_i)}$ for some $\tau\in W$. The number of these subquotients is equal to the dimension of the fundamental
local Weyl module $W(\om_i)$. (Theorem~\ref{filtration})
\end{enumerate}
\end{thm}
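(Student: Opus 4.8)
The plan is to prove the three parts together, using Part~(1) as the linchpin: once $\W_{\si(\la)}$ is known to be free over $A_{\si(\la)}$ with fibre $W_{\si(\la)}$, every question about $\W_{\si(\la)}$ reduces to the finite-dimensional generalized local Weyl module $W_{\si(\la)}$ --- whose graded character is a $t=0$ specialized nonsymmetric Macdonald polynomial by \cite{FM3} --- together with the Hilbert series of $A_{\si(\la)}$. The recurring technique will be a ``sandwich'': a PBW-type upper bound for each module, meeting an explicit lower bound assembled from generalized local Weyl modules and from the combinatorics of nonsymmetric Macdonald polynomials / the nonsymmetric $q$-Whittaker function.

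For Part~(1), I would first invoke the PBW theorem for $\U(\fn^{af})$ and the four families of defining relations --- together with their $\fsl_2$-triple consequences in the spirit of Garland's and Chari--Pressley's identities --- to produce a finite set $S$ of ordered monomials in root vectors with $\W_{\si(\la)}=\sum_{m\in S}A_{\si(\la)}\,mv$ and $|S|\le\dim W_{\si(\la)}$; together with the evident surjection $\W_{\si(\la)}\T_{A_{\si(\la)}}\bC\twoheadrightarrow W_{\si(\la)}$ this already pins the fibre dimension to $\dim W_{\si(\la)}$. What remains is freeness over $A_{\si(\la)}$, which I would obtain by exhibiting inside $\W_{\si(\la)}$ a free $A_{\si(\la)}$-submodule of rank $\dim W_{\si(\la)}$ (lifting a basis of $W_{\si(\la)}$ and checking $A_{\si(\la)}$-independence by a grading argument), or by transferring the statement from the case $\si=e$ --- where $\W_{e(\la)}$ coincides with the classical global Weyl module $\W(w_0\la)$ of \cite{CP,CL,FL2}, known to be free over $A(w_0\la)$ --- along a twisting functor attached to $\si$. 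Part~(2) then follows: the $\fsl_2$-triple relations extracted above show that $h\T t^k\mapsto(h\T t^k)v$ factors through the classical highest-weight algebra $A(w_0\la)$, a tensor product of rings of symmetric functions, giving a graded surjection $A(w_0\la)\twoheadrightarrow A_{\si(\la)}$; since by Part~(1) both algebras act freely on modules whose fibres have the same graded dimension, their Hilbert series coincide and the surjection is an isomorphism.

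For Part~(3), assume $\pair{\la}{\al_i^\vee}<0$, so that $\la+\om_i\in X_-$, and write $A$ for the common algebra $A_{\cdot(\la+\om_i)}$ of Part~(2). I would construct the embedding by hand: take $v'\in\W_{\si(\la)}$ to be the image of $v$ under a specific element of $\U(\fn^{af})$ attached to the $i$-th simple reflection --- the module-theoretic counterpart of the intertwiner step $\la\mapsto\la+\om_i$ in the nonsymmetric Macdonald recursion --- and verify that $v'$ satisfies the four families of defining relations of $\W_{\si(\la+\om_i)}$; this is a finite computation inside the relevant rank-one affine subalgebra using only the relations of $\W_{\si(\la)}$, and it produces a morphism $\phi\colon\W_{\si(\la+\om_i)}\to\W_{\si(\la)}$. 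To organize the quotient $\W_{\si(\la)}/\phi(\W_{\si(\la+\om_i)})$ I would exhibit the requisite further extremal weight vectors $v_\tau\in\W_{\si(\la)}$ and check that each satisfies the defining relations of a generalized global Weyl module $\W_{\tau(\la+\om_i)}$, so that the submodule it generates is a graded quotient of $\W_{\tau(\la+\om_i)}$; passing to the quotient and ordering by degree then arranges these into a filtration of the cokernel whose subquotients are \emph{quotients} of the modules $\W_{\tau(\la+\om_i)}$.

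It then remains to see that all these surjections are isomorphisms, which I would do by a character count. By Part~(1) every module in play is free over $A$ with fibre the corresponding generalized local Weyl module, and by the known factorization $\dim W(\mu)=\prod_i(\dim W(\om_i))^{\pair{\mu}{\al_i^\vee}}$ of local Weyl-module dimensions, together with the $\si$-independence of $\dim W_{\si(\la+\om_i)}$, the graded dimension of $\W_{\si(\la)}$ is governed by $\dim W(\om_i)$. Comparing the PBW upper bound for $\ch\W_{\si(\la)}$ with the lower bound $\ch\W_{\si(\la+\om_i)}$ plus the contributions of the $v_\tau$, and matching both with the intertwiner (Knop--Sahi, Cherednik) recursion relating $E_{\si(\la+\om_i)}$ to $E_{\si(\la)}$ at $t=0$ --- equivalently, with the Demazure-operator recursion for the series part of the nonsymmetric $q$-Whittaker function --- forces equality throughout: $\phi$ is injective, each $v_\tau$ generates the full $\W_{\tau(\la+\om_i)}$, and the cokernel acquires the asserted filtration with grading shifts read off from the length function. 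The main obstacle is exactly this tightness: the PBW upper bound and the extremal-vector lower bound have to meet on the nose, which hinges on (a) the freeness of Part~(1) to transport everything to the fibres, (b) the identification in \cite{FM3} of $\ch W_{\si(\la)}$ with the specialized nonsymmetric Macdonald polynomial, and (c) enough control of the affine Weyl-group combinatorics to pin down the set $\{\tau\}$ and the grading shifts in the recursion; by contrast, constructing the maps and checking the defining relations, while requiring care, are in essence rank-one ($\widehat{\fsl}_2$) computations and should not be the bottleneck.
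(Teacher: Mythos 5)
There is a genuine gap, and it sits exactly where you predicted the bottleneck would \emph{not} be: in how Part~(1) gets off the ground. Your two proposed routes to freeness both fail as stated. The ``twisting functor attached to $\si$'' does not exist for general $\si$: the twist $\widehat\si$ is only defined on the generators $f_{-\al}\otimes t$, $e_\al\otimes 1$ and is not an automorphism of $\fn^{af}$, and indeed the modules $W_{\si(\la)}$ for different $\si$ have genuinely different $\fn^{af}$-module structures (only their dimensions agree), so there is no equivalence transporting freeness from $\si=\id$. The alternative --- lifting a basis of $W_{\si(\la)}$ and ``checking $A_{\si(\la)}$-independence by a grading argument'' --- is precisely the hard content of the theorem; no direct argument of this kind is known even in the classical case, where freeness (Theorem~\ref{loc-glob-classical}) is itself a nontrivial result of \cite{CFK}. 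This gap propagates: your Part~(2) deduces $A_{\si(\la)}\simeq A(w_0\la)$ from the surjection $A(w_0\la)\twoheadrightarrow A_{\si(\la)}$ plus Part~(1), but freeness alone does not determine $\ch A_{\si(\la)}$ (it only gives $\ch\W_{\si(\la)}=\ch W_{\si(\la)}\cdot\ch A_{\si(\la)}$, with both unknowns on the right); you need an independent \emph{lower} bound on $\ch A_{\si(\la)}$. Finally, your Part~(3) sandwich explicitly assumes Part~(1) for all the modules in play while simultaneously claiming to force the injectivity and isomorphism statements --- if (1) is only available through (3), the argument is circular.

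The paper breaks this circle by reordering the logic. First, Proposition~\ref{Asi} proves $A_{\si(\la)}\simeq A(w_0\la)$ \emph{independently of freeness}: the surjection $A(w_0\la)\to A_{\si(\la)}$ comes from the $\fsl_2$-triple argument you describe, and the reverse bound comes from a one-factor fusion filtration of the classical module $\W(w_0\la)$ based at the extremal vector $v_\si$ of weight $\si(\la)$, whose associated graded is a quotient of $\W_{\si(\la)}$; this yields a surjection $A_{\si(\la)}\to{\rm U}(\fh\otimes t\C[t])v_\si$, a space of character $(q)_{w_0\la}^{-1}$. This gives the clean upper bound $\ch\W_{\si(\la)}\le\ch W_{\si(\la)}/(q)_{w_0\la}$ (Corollary~\ref{estimate}), with equality iff the action is free. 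Then Parts~(1) and~(3) are proved together by induction on $\ell(\si)$ (and on the characteristic $m$ along a reduced word for $t_{-\om_i}$), one simple reflection at a time: the base case $\ell(\si)=0$ is the classical theorem, and the inductive step matches the \emph{local} character identity of Lemma~\ref{m=1} (a consequence of \cite[Theorem~2.18]{FM3} and the quantum Bruhat graph) against the surjections of Lemma~\ref{L:to-ker}, forcing every inequality in the sandwich to be an equality. Your global character-count via the Macdonald/Whittaker recursion is the right instinct and is essentially what the paper does, but it must be run as this fine-grained induction over single QBG edges, with the local theory of \cite{FM3} supplying the matching identities --- not assumed wholesale after a direct proof of freeness that is not available.
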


The main tools we use to prove Theorem~\ref{T:A} are the combinatorics of the quantum Bruhat graph and known properties of generalized local Weyl modules from~\cite{FM3}. We note that modules similar to the generalized Weyl modules were studied in the quantum setting in \cite{INS,NS,NNS}.

In order to establish a connection between generalized Weyl modules and nonsymmetric $q$-Whittaker functions, we study a family of polynomials
$E_\la^\sigma(X;q,v)$ indexed by $\la\in X$ and $\sigma\in W$. Up to a normalization, the polynomial $E_\la^\sigma$ is obtained by applying Demazure-Lusztig operators to the nonsymmetric Macdonald polynomial $E_\la$ (see \eqref{E:def-E-u}). In particular, we have $E_\la^\id=E_\la$. It is immediate from the definition that for any fixed $\sigma\in W$, the set $\{E_\la^\sigma\}_{\la \in X}$ is a basis for the polynomial module of DAHA. The Ram-Yip formula \cite{RY} gives an explicit expression for $E_\la^\si$ in terms of alcove walks. We use this to express the graded characters of local and global Weyl modules (extended to modules over $\fn^{af}\oplus\fh\otimes 1$) in terms of $E_\la^\si$ for $\la\in X_-$ and $\si\in W$; see \eqref{E:mac-weyl} and \eqref{E:mac-glob}. This may be viewed as an extension of results of \cite{CI,I,San} expressing the $v=0$ specializations of Macdonald polynomials as graded characters. We also mention the related works \cite{NNS,NS} and \cite{LNSSS1}--\cite{LNSSS4} in this direction. The works \cite{I,LNSSS4,San} realize the nonsymmetric Macdonald polynomials $E_\la(X;q,0)$ for arbitrary $\la\in X$ as graded characters of Demazure submodules of Weyl modules, while \cite{NNS} considers the $v=\infty$ specializations for $\la\in X$ (which are essentially our $E_\la^{w_0}(X;q,0)$---see \eqref{E:Tw0}).

In type $A$, the polynomials $E_\la^{\sigma}$ have been studied recently by Alexandersson~\cite{A}, using the combinatorics of non-attacking fillings with  general basement (determined by $\si$), which results in a natural extension to $E_\la^\si$ of the Haglund-Haiman-Loehr formula for $E_\la$ \cite{HHL}.

\comment{
There is an auxiliary lattice $Y$, which is either the weight lattice of $\fg$ or that of its Langlands dual $\fg^\vee$. Consider the group algebra $\K[Y]$ with its natural $W$-action. The $q$-Toda difference operators make up a homomorphism of algebras
\begin{align}\label{E:q-toda}
\cT: \K[Y]^W \to \End(\Q(q)[Z]).
\end{align}
For instance, when $\fg=\fsl_2$ and $Y$ is the weight lattice of $\fg$. We have
$$ \cT(Y^\om+Y^{-\om}) = (1-Z^{-2})\Gamma + \Gamma^{-1}. $$
Let $\Fun(W,\Q(q)[Z])$ be the set of all functions $f: W\to\Q(q)[Z]$. We have $\Fun(W,\Q(q)[Z])=\oplus_{\si\in W}\Q(q)[Z]e_\si$ where $e_\si$ is the characteristic function of $\si\in W$. The $q$-Toda Dunkl operators give a lifting of \eqref{E:q-toda} to
\begin{align*}
\hcT: \K[Y] \to \End(\Q(q)[Z]^{|W|})
\end{align*}
}

We apply our study of the polynomials $E_\la^\si$ to establish a connection between the characters of generalized global Weyl modules and the nonsymmetric $q$-Whittaker function of \cite{CO}. The nonsymmetric $q$-Whittaker function $\Om(Z,X)$ is a joint eigenfunction for the $q$-Toda Dunkl operators. These operators are discussed briefly in \S\ref{SS:whitt}, but we remark here that the $q$-Toda Dunkl operators can be symmetrized to the $q$-Toda difference operators studied in \cite{E,R,S,BF1,BF2,FFJMM,GLO,GiL} and elsewhere. The applications of $q$-Toda difference operators are remarkably diverse, and the natural expectation is that the same should be true for the $q$-Toda Dunkl operators. We refer the interested reader to \cite{CO} for more details on $q$-Toda Dunkl operators.

The connection between nonsymmetric $q$-Whittaker functions and generalized Weyl modules can be formulated as follows:

\begin{thm}[Theorem~\ref{T:whitt}]
\label{T:B}
Suppose that $\fg$ is simply-laced, and let $(\cdot,\cdot)$ be the invariant form on $\fh^*$ such that $(\al,\al)=2$ for all roots $\al\in\De$. Let $\{e_\si\}_{\si \in W}$ be a basis for a $|W|$-dimensional vector space. Then the vector-valued generating function
\begin{align}\label{E:GF}
\sum_{\la\in X_-} q^{\frac{(\la,\la)}{2}}Z^{-\la} \sum_{\si\in W} \ch \W_{\si(\la)}\, e_\si,
\end{align}
where $q$ is a formal parameter and $\ch \W_{\si(\la)}$ is the graded $q$-character, is the series part of the nonsymmetric $q$-Whittaker function $\Om(Z,X)$.
\end{thm}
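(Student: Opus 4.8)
The plan is to identify both sides of the claimed equality with the same explicit sum over $\la \in X_-$ and $\si \in W$, using the polynomials $E_\la^\si$ as the bridge. First I would recall from the paper's earlier analysis — specifically formula \eqref{E:mac-glob} — that the graded character $\ch \W_{\si(\la)}$, extended to an $\fn^{af}\oplus\fh\otimes 1$-module, is expressed in terms of $E_\la^\si(X;q,v)$; by Theorem~\ref{T:A}(1) the free action of $A_{\sigma(\la)}$ means this character factors as the character of the local module $W_{\si(\la)}$ times the character of the polynomial ring $A_{\sigma(\la)}$, and the latter is $\si$-independent by Theorem~\ref{T:A}(2). Thus the coefficient of $e_\si$ in \eqref{E:GF} is a weighted sum over antidominant $\la$ of $E_\la^\si$-type data; the main bookkeeping task is to pin down the precise normalization (the $q^{(\la,\la)/2}Z^{-\la}$ prefactor and the $v$-specialization relating $\ch \W$ to $E_\la^\si(X;q,0)$) so that the generating function matches a known expansion.

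Second, I would turn to the right-hand side: the series part of $\Om(Z,X)$. Here I would invoke the defining property from \cite{CO} that $\Om(Z,X)$ is the joint eigenfunction of the $q$-Toda Dunkl operators. The key step is to show that the generating function \eqref{E:GF} satisfies the same eigenfunction equations. Concretely, the $q$-Toda Dunkl operators are obtained from the Cherednik operators of the DAHA by the standard "Whittaker limit" (sending the Hecke parameter $v \to 0$ or $\infty$ appropriately), and their eigenfunctions are built as sums over the lattice of nonsymmetric Macdonald polynomials in that limit. Since for each fixed $\si$ the set $\{E_\la^\si\}_{\la\in X}$ is a DAHA-module basis, the vector-valued object $\sum_\si (\text{series in }E_\la^\si)\,e_\si$ is naturally acted on by the DAHA acting diagonally-plus-Hecke-mixing on the $e_\si$ components; taking the Whittaker limit of the eigenfunction identity for the full polynomial representation yields precisely the recursion that \eqref{E:GF} must satisfy. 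I would verify that the branching recursion in Theorem~\ref{T:A}(3) — the embedding $\W_{\si(\la+\om_i)} \hookrightarrow \W_{\si(\la)}$ with cokernel filtered by $\W_{\tau(\la+\om_i)}$, the number of subquotients being $\dim W(\om_i)$ — is exactly the combinatorial shadow of the action of a $q$-Toda Dunkl operator (whose "classical part" $\cT(Y^{\om_i}+\cdots)$ encodes multiplication by a character of $W(\om_i)$) on the generating function. This matching of the module-theoretic short exact sequence with the operator recursion is the crux.

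Third, having shown both sides solve the same eigenfunction system, I would pin down the solution uniquely. The $q$-Toda Dunkl eigenfunctions are not unique without a normalization, but the series part is characterized by its leading term: the triangularity of $E_\la^\si$ with respect to the dominance-type order, together with the fact that $\W_{\si(0)}$ for $\la = 0$ is one-dimensional (the trivial module, since all the defining relations become either $e_\al v = 0$ or $f_\al\otimes t\, v = 0$ with the exponents forcing triviality), shows that the $\la = 0$ term of \eqref{E:GF} is $\sum_\si e_\si$, the normalized leading term of $\Om$. Comparing leading terms fixes the normalization, and uniqueness of the eigenfunction with given leading term completes the identification.

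The hard part will be the precise translation in the second step: establishing that the module-theoretic filtration of Theorem~\ref{T:A}(3) corresponds term-by-term to the action of the $q$-Toda Dunkl operators on the $e_\si$-graded generating function. This requires a careful dictionary between (a) the combinatorics of the quantum Bruhat graph governing how the Weyl-group labels $\tau$ of the subquotients $\W_{\tau(\la+\om_i)}$ arise from $\si$, and (b) the alcove-walk / Ram-Yip combinatorics \cite{RY} underlying the $E_\la^\si$ and hence the Dunkl operator action. The simply-laced hypothesis and the choice of form with $(\al,\al)=2$ enter precisely to make the $q$-power $q^{(\la,\la)/2}$ match the degree shift in the Ram-Yip expansion; verifying this compatibility in all simply-laced types — rather than just $\fsl_2$, where it can be checked by hand against the operator $(1-Z^{-2})\Gamma + \Gamma^{-1}$ — is where the real work lies.
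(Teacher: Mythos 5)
Your proposal takes a genuinely different route from the paper, and as it stands it has two unresolved gaps. The paper's actual proof is a direct computation: it starts from the known expansion of the basic hypergeometric function $G(Z,X)$ in terms of $E_\la(Z)^*E_\la(X)$, applies the duality symmetry \eqref{E:G-duality} with $\varphi(T_i)=T_i$ to rewrite $G(Z,X)=(T_{\si^{-1}}^{-1})_Z(T_\si)_X\cdot G(Z,X)$, which (via \eqref{E:*-T}) converts the expansion into one in terms of $E_\la^\si(Z)^*E_\la^\si(X)$, and then computes the Whittaker limit $\lim_{v\to 0}v^{-(2\rho,\la_-)}\Gamma_{-k\rho}^Z\si_Z^{-1}(E_\la^\si(Z;q,v)^*)$ term by term. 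Triangularity (Corollary~\ref{C:TE-supp}) and the existence of the $v=\infty$ specialization (Proposition~\ref{P:RY-limit-inf}, transported through $*$) show that this limit is $Z^{-\la_-}$ for $\la\in X_-$ and $0$ otherwise; the surviving $X$-part is then identified with $\ch\W_{\si(\la)}$ by \eqref{E:mac-glob}. No eigenfunction equation is invoked anywhere; the eigenfunction property of \eqref{E:GF} is a \emph{consequence} of the theorem, not an ingredient.

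The first gap in your plan is the uniqueness step: you assert that ``uniqueness of the eigenfunction with given leading term completes the identification,'' but no such uniqueness statement is proved in the paper or available in \cite{CO}, and establishing it would itself require an analysis of the recursion structure of the operators $\hY^\mu$ on the completed space $\hcF$ (one must show the joint eigenfunction system propagates the $\la=0$ term to all coefficients, with no homogeneous solutions in the relevant completion). The second gap is the verification that \eqref{E:GF} satisfies the eigenfunction equations directly from Theorem~\ref{T:A}(3): you correctly flag this as the crux, but the dictionary between the QBG-labelled filtration (with its affine weight shifts $\nu$) and the explicit coefficients of the $q$-Toda Dunkl operators is only exhibited in the paper for $\fsl_2$ (Corollary~\ref{rr} and the remark following it), and even there the decomposition identities \eqref{dec-m}, \eqref{dec+m} are themselves proved using the character formula \eqref{char} rather than independently. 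So both pillars of your argument would require substantial new work that the direct computation via the duality anti-involution avoids entirely. If you want to proceed along the paper's lines, the key observation you are missing is that $\varphi$ fixes the $T_i$, so the $\si$-component of the limit defining $\Om(Z,X)$ can be computed by moving $T_\si$ onto the $X$-variable and $T_{\si^{-1}}^{-1}$ onto the $Z$-variable before taking $v\to 0$.
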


One recovers the $q$-Whittaker function $\mathcal{W}$ of \cite{Ch-whitt} by taking the identity component (i.e., the coefficient of $e_\id$) of the nonsymmetric $q$-Whittaker function $\Om(Z,X)$. The function $\mathcal{W}$ is directly related to the $q$-Whittaker functions studied in \cite{BF1,BF2,FFJMM,GLO}.

One consequence of Theorem~\ref{T:B} is that the generating function \eqref{E:GF} satisfies eigenfunction equations with respect to the $q$-Toda Dunkl operators. This boils down to certain recurrence relations for the graded characters $\ch \W_{\si(\la)}$. These recurrence relations are, in turn, a shadow of the decomposition procedure in Theorem~\ref{T:A}(3). See \S\ref{SS:A1-whitt} for an explicit demonstration of this in the $\fsl_2$-case.

We expect that the full DAHA-symmetry of the nonsymmetric $q$-Whittaker function from \cite[Theorem~3.4]{CO} will provide further relations among the $\ch \W_{\si(\la)}$. We plan to investigate this in a future work.

We are able to prove a variant of Theorem~\ref{T:B} in the greater generality of dual untwisted affine root systems (using the right-hand side of \eqref{E:mac-glob} as a substitute for $\ch \W_{\si(\la)}$). While the connection between $q$-Whittaker functions and generalized Weyl modules holds at present only in the simply-laced case, it is expected to hold more generally once the theory of generalized Weyl modules is developed in the twisted cases (see \cite{FM4}). Finally, one can also formulate a version of Theorem~\ref{T:B} in the non-simply-laced untwisted cases, but our method of proof does not apply in this situation.

Theorem~\ref{T:B} gives the nonsymmetric $q$-Whittaker function a proper home in representation theory, beyond the purely DAHA-related considerations of \cite{CO}. Recently S.~Kato \cite{Kato} has shown that the approach of \cite{BF1,BF2} can be extended to the generalized Weyl modules, thus giving a geometric interpretation of some of our results.

The paper is organized as follows. In \S\ref{S:mac} we introduce the combinatorial constructs of the paper, including nonsymmetric Macdonald polynomials, alcove walks, and the Ram-Yip formula. Weyl modules are the focus of \S\ref{S:weyl}, where we first recall known results on classical and generalized Weyl modules and then introduce and study the generalized global Weyl modules. The connection to nonsymmetric $q$-Whittaker functions is established in \S\ref{S:whitt}. Finally, \S\ref{S:A1} gives the main constructions of the paper in detail for $\fg=\fsl_2$. The reader may find it helpful to refer to \S\ref{S:A1} while reading the rest of the paper.

\endgroup
\setcounter{lem}{\thetmp}

\section*{Acknowledgements}
We thank Michael Finkelberg, Mark Shimozono, Ivan Cherednik, and Syu Kato for stimulating discussions. This work began at Mini-Workshop 1609a ``PBW Structures in Representation Theory'' at the Mathematisches Forschungsinstitut Oberwolfach. We thank the organizers of this workshop and the MFO for the hospitality. D.O. gratefully acknowledges support from the National Science Foundation, via DMS-1049268 ``US Junior Oberwolfach Fellows.'' This work has also been supported by the Russian Academic Excellence Project `5-100'.

\section{Combinatorics of nonsymmetric Macdonald polynomials}
\label{S:mac}

In this section we introduce the main combinatorial objects to be studied in this paper: affine root systems, alcove walks, the quantum Bruhat graph, and nonsymmetric Macdonald polynomials. The latter are best understood using the polynomial module for the double affine Hecke algebra, which we also discuss.

For more details on affine root systems and extended affine Weyl groups we refer the reader to \cite[Chapter 4]{Kac} and \cite[Chapters 1-2]{M3}. Our discussion of the nonsymmetric Macdonald polynomials and the double affine Hecke algebra follows \cite{OS}, though we also recommend the excellent references \cite{Ch2,H,M3,Sto-surv} on this subject.

\subsection{Affine root systems}
\label{SS:af}
Let $X$ be the weight lattice of a finite crystallographic root system $\De(X)$.\footnote{Following \cite{OS}, we find it convenient to use the lattice $X$ to label the root system $\De(X)$ and related objects, even though the root system is the most basic object. While it may seem excessive, we insist on applying the label $X$ frequently, because there will soon be an auxiliary lattice $Y$ and root system $\De(Y)$.} Let $\De_s(X)$ and $\De_l(X)$ be the subsets of $\De(X)$ consisting of all short and long roots, respectively. If $\De(X)$ is simply-laced, we take $\De_s(X)=\De_l(X)=\De(X)$. Let $\De_+(X)$ be a fixed set of positive roots in $\De(X)$ and let $\De_-(X)=-\De_+(X)$. Let $\{\al_i^X\}_{i\in I}\subset\De_+(X)$ be the corresponding simple roots and $\{\al_i^{\vee X}\}_{i\in I}\subset X^\vee:=\Hom(X,\Z)$ the simple coroots.

Let $\langle\cdot, \cdot\rangle: X^\vee\times X\to\Z$ be the evaluation pairing. Let $\{\om_i^X\}_{i\in I}\subset X$ be the fundamental weights given by $\pair{\al_j^{\vee X}}{\om_i^X}=\de_{ij}$. Let $Q^X=\bigoplus_{i\in I}\Z\al_i^X$ be the root lattice, $Q^X_+=\oplus_{i\in I}\Z_{\ge 0}\al_i^X$ the positive root cone, and $Q^X_-=-Q^X_+$. Let $X_+=\oplus_{i\in I}\Z_{\ge 0}\om^X_i$ be the cone of dominant weights and $X_- = -X_+$.

Let $W$ be the Weyl group of $\De(X)$. For each $\al\in\De(X)$, let $s_\al\in W$ denote the correspond reflection and let $\al^\vee\in\De(X)^\vee$ be the associated coroot. Let $s_i=s_{\al_i}$ for $i\in I$. The Weyl group $W$ acts on $X$ and $X^\vee$ by the formulas
\begin{align*}
s_\al(\la)=x-\pair{\al^\vee}{\la}\al,\quad s_\al(\la^\vee)=\la^\vee-\pair{\la^\vee}{\al}\al^\vee.
\end{align*}
For any $\la\in X$, let $\la_-$ be the unique element of $X_-$ in the $W$-orbit of $\la$.

An affinization of $\De(X)$ is an affine root system $\De(\tX)$ contained in the lattice $\tX=X\oplus\Z\de^X$. The affine root system is determined by a choice of the simple affine root $\al_0^X\in\tX$. We allow two possibilities for $\al_0^X$:
\begin{enumerate}
\item[$\ut$] $\al_0^X = -\theta^X +\de^X$, where $\theta^X$ is the dominant long root of $\De(X)$
\item[$\du$] $\al_0^X = -\vartheta^X + \de^X$, where $\vartheta^X$ is the dominant short root of $\De(X)$
\end{enumerate}
In $\ut$ we obtain the untwisted affinization of $\De(X)$ (Table Aff 1 of \cite{Kac}). When $\De(X)$ is simply-laced, one has $\theta^X=\vartheta^X$ and the two affinizations coincide. When $\De(X)$ is not simply-laced, $\theta^X\neq\vartheta^X$ and $\du$ yields the twisted affinization of $X$ (Tables Aff 2 and 3 of \cite{Kac}, excluding $A_{2n}^{(2)}$). We will call an affinization {\em dual untwisted} whenever $\du$ holds, both when $\De(X)$ is simply-laced and when it is not.\footnote{Such $\tX$ are exactly the dual affine root systems of untwisted affine root systems.}


An affinization determines an affine root system $\De(\tX)$ as follows. Let $\tI=I\sqcup\{0\}$. The simple affine roots are $\{\al_i^X\}_{i\in\tI}\subset\tX$. The set of all (real) affine roots is
\begin{align*}
\De(\tX)=\{\al+m\de^X : \al\in\De_s(X),m\in\Z\}\cup\{\al+m\de^X : \al\in\De_l(X),m\in r\Z\}
\end{align*}
where $r$ is the corresponding index of \cite[Table Aff $r$]{Kac}. The positive affine roots (determined by our choice of simple roots) are
\begin{align*}
\De_+(\tX)=\{\al+m\de^X\in\De(\tX) : \text{$\al\in\De(X)$, $m\ge 0$, where $m>0$ if $\al\in\De_-(X)$}\}.
\end{align*}
Let $\De_-(\tX)=-\De_+(\tX)$. For an affine root $\al+m\de^X\in\De(\tX)$, define
\begin{align*}
{\rm Re}(\al+m\de^X)&=\overline{\al+m\de^X}=\al,\quad
\deg(\al+m\de^X)=m.
\end{align*}

\subsection{Auxiliary affine root system}
Given a particular affinization $\De(\tX)$, we define an auxiliary affine root system $\De(\tY)$ as follows:
\begin{enumerate}
\item[$\ut$] If $\tX$ is untwisted, let $Y$ be the weight lattice of $\De(Y)=\De(X)^\vee$.\footnote{So, in this case, $Y$ is the coweight lattice of $\De(X)$.} Define $\De(\tY)$ to be the untwisted affinization of $\De(Y)$.
\item[$\du$] If $\tX$ is dual untwisted, let $Y$ be the weight lattice of $\De(Y)=\De(X)$ and define $\De(\tY)=\De(\tX)$.
\end{enumerate}
When $\De(X)$ is simply-laced, we identify $\De(X)$ with $\De(X)^\vee$ in the usual way and there is no difference between $\ut$ and $\du$.

The simple roots of $\De(\tY)$ are indexed compatibly with those of $\De(\tX)$:
\begin{enumerate}
\item[$\ut$] $\al_0^Y = -\theta^Y+\de^Y = -(\vartheta^X)^\vee +\de^Y$ and $\al_i^Y = \al_i^{\vee X}$ for all $i\in I$
\smallskip
\item[$\du$] $\al_i^Y = \al_i^X$ for all $i\in\tI$
\end{enumerate}
We transport all further notation from the previous section to $\De(\tY)$ in the obvious way.

\begin{rem}
While $\De(\tX)$ is the ``input'' datum for the nonsymmetric Macdonald polynomials, we will see below that all of the relevant combinatorics takes place in $\De(\tY)$.
\end{rem}

\begin{rem}
In \cite{OS}, the pair of lattices $(X,Y)$ is called a double affine datum, as such a pair gives rise to a double affine Hecke algebra.
\end{rem}

\subsection{Weyl groups}
Let $W = \langle s_i : i\in I\rangle$ be the (common) Weyl group of $\De(X)$ and $\De(Y)$. The extended affine Weyl groups of $\tX$ and $\tY$ are
\begin{align*}
W(\tX)&=Y\rtimes W,\qquad
W(\tY)=X\rtimes W.
\end{align*}
For any $\mu\in Y$, let $t_\mu=(\mu,\id)$ be the corresponding translation element in $W(\tX)$. We identify $W$ with a subgroup of $W(\tX)$ via $\si\mapsto (0,\si)$. The weight and direction of an element $w=(\la,\si)\in W(\tX)$ are $\wt(w)=\la$ and $\dir(w)=\si$. Hence $w=t_{\wt(w)}\dir(w)$ for any $w\in W(\tX)$. We transport these definitions to $W(\tY)$ in the obvious way.

The affine reflections corresponding to $\al_0^X$ and $\al_0^Y$ are
\begin{align*}
s_0^X=t_{-\vartheta^Y}s_{\vartheta_Y}\in W(\tX),\quad s_0^Y=t_{-\vartheta^X}s_{\vartheta^Y}\in W(\tY).
\end{align*}
When $\De(\tX)$ is untwisted, we have $s_0^X=t_{-(\theta^X)^\vee}s_{\theta^X}$.

The subgroups $W_a(\tX)=Q^Y\rtimes W = \langle s_0^X, W\rangle\subset W(\tX)$ and $W_a(\tY)=Q^X\rtimes W=\langle s_0^Y, W\rangle\subset W(\tY)$ are Coxeter groups of affine reflections. Let $\ell$ denote their length functions with respect to $\tS=\{s_i\}_{i\in\tI}$, where $s_0=s_0^X$ or $s_0=s_0^Y$.

Let $\Pi^X=W(\tX)/W_a(\tX)=Y/Q^Y$ and $\Pi^Y=W(\tY)/W_a(\tY)=X/Q^X$. One has the semidirect product decompositions
\begin{align*}
W(\tX)=\Pi^X\ltimes W_a(\tX),\qquad W(\tY)=\Pi^Y\ltimes W_a(\tY).
\end{align*}
Thus elements $w\in W(\tX)$ (resp. $w\in W(\tY)$) are uniquely represented as $w=\pi s_{i_1}\dotsm s_{i_\ell}$ where $i_j\in\tI$ and $s_0=s_0^X$ (resp. $s_0=s_0^Y$) and $\pi\in\Pi^X$ (resp. $\pi\in\Pi^Y$). We extend the length function on $W_a(\tX)$ to $W(\tX)$ by declaring $\ell(w)=\ell(\pi^{-1}w)$ for the unique element $\pi\in\Pi^X$ such that $\pi^{-1}w\in W_a(\tX)$. We obtain a length function on $W(\tY)$ similarly. Then an expression $w=\pi s_{i_1}\dotsc s_{i_\ell}$ is called reduced if $\ell=\ell(w)$.

The Bruhat order on the Coxeter system $(W(\tY),\tS)$ extends to a partial order $\le$ on $W(\tY)$ by declaring
\begin{align*}
\pi u \le \pi v :\Leftrightarrow u\le v
\end{align*}
for all $\pi\in\Pi^Y$ and $u,v\in W_a(\tY)$.\footnote{Thus, $\pi u$ and $\pi'v$ are incomparable if $\pi\ne\pi'$.} We will use the notation $\le$ for the compatible Bruhat order on $W$.

For any $\la\in X$, let $m_\la\in W(\tY)$ be the unique Bruhat minimal representative of the coset $t_\la W\in W(\tY)/W$. Explicitly, $m_\la = t_\la \si_\la$ where $\si_\la$ the unique minimal representative of the set $\{\si\in W : \si(\la)=\la_-\}$, which is a coset of the stabilizer of $\la$ in $W$. Thus, $\si_\la=\id$ if and only if $\la$ is antidominant, while $\si_\la=w_0$ if and only if $\la$ is dominant and regular. Here $w_0$ is the longest element of $(W,\{s_i : i \in I\})$.

Identifying $\la\in X$ with its representative $m_\la$, we obtain a partial order on $X$:
\begin{align}\label{E:X-order}
\mu\le\la :\Leftrightarrow m_\mu\le m_\la.
\end{align}
We note that $\mu\le\la$ implies $\mu_--\la_-\in Q_+$ and $\si_\mu>\si_\la$ if $\mu_-=\la_-$ (but not conversely).

\subsection{Pairing}
Let $(\cdot,\cdot) : X\times Y \to \Q$ be the $W$-invariant pairing specified by its restriction to $Q^X\times Q^Y\to \Z$ as follows:
\begin{enumerate}
\item[$\ut$] $Q^Y=Q^{\vee X}=\bigoplus_{i\in I}\Z\al_i^{\vee X}$, and $(\cdot,\cdot)$ is the canonical pairing between $Q^X$ and $Q^{\vee X}$ determined by the Cartan matrix of $\De(X)$, e.g, $(\al_i^X,\al_j^Y)=\pair{\al_j^{\vee X}}{\al_i^X}$ since $\al_j^Y=\al_j^{\vee X}$.
\item[$\du$] $Q^Y = Q^X$ and the pairing is uniquely determined by the normalization $(\al,\al)=2$ for all {\em short} $\al\in\De(X)$.
\end{enumerate}

Let $e$ be the smallest positive integer such that $(X,Y)\subset\frac{1}{e}\Z$. Then $W(\tX)$ acts on $X\oplus\frac{1}{e}\Z\de^X$ via the formula
\begin{align*}
(y,\si)\cdot(x+m\de^X) = \si(x)+(m-(\si(x),y))\de^X,
\end{align*}
where $x\in X$, $y\in Y$, $u\in W$, and $m\in\frac{1}{e}\Z$. The group $W(\tY)$ acts on $Y\oplus\frac{1}{e}\Z\de^Y$ by the same formula with the roles of $x$ and $y$ reversed. These actions preserve the sets of affine roots $\De(\tX)$ and $\De(\tY)$, respectively.

For any $w\in W(\tY)$ we define $\Inv(w)=\De_+(\tY)\cap w^{-1}(\Delta_-(\tY))$. Then we have $\ell(w)=|\Inv(w)|$. If $w=\pi s_{i_1}\dotsc s_{i_\ell}$ is a reduced expression, then $\Inv(w)=\{\be_1,\dotsc,\be_\ell\}$ where
\begin{align}\label{E:beta}
\be_j = s_{i_\ell}\dotsm s_{i_{j+1}}(\al_{i_j}).
\end{align}

\subsection{Alcove walks}
Suppose $w=\pi s_{i_1}\dotsm s_{i_\ell}\in W(\tY)$ is a reduced expression.

An {\em alcove walk} (or alcove path) of type $\uw=(\pi,i_1,\dotsc,i_\ell)$ beginning at $\si\in W(\tY)$ is a sequence $p=(\si_0,\si_1,\dotsc,\si_\ell)$ of elements of $W(\tY)$ satisfying
\begin{align*}
\si_0 = \si,\qquad \si_j \in \{\si_{j-1}, \si_{j-1}s_{i_j}\}\quad \text{for}\quad j\ge 1.
\end{align*}
The endpoint, weight, and direction of an alcove walk $p$ are defined as
\begin{align*}
\ept(p)=\si_\ell,\quad \wt(p)=\wt(\ept(p)),\quad \dir(p)=\dir(\ept(p)).
\end{align*}

\begin{rem}
The definition of alcove walks, due to \cite{Ram}, is essentially equivalent to that of LS-galleries from \cite{GL}. The $\la$-chains of \cite{LP} are also very closely related to alcove walks.
\end{rem}

For fixed $\uw$ and $\si$, there is a bijection between the set of alcove walks $p$ of type $\uw$ starting at $\si$ and the set of all subsets $J\subset\{1,\dotsc,\ell\}$. The bijection sends $p$ to the subset $J(p)=\{j : \si_j = \si_{j-1}\}$. The set $J(p)$ is called the set of folds of $p$. Given a subset $J\subset \{1,\dotsc,\ell\}$, we write $p_J$ for the corresponding alcove walk.

Let $\B(\si,\uw)$ be the set of all alcove walks of type $\uw$ starting at $\si$. Below we will write $\B(\si,w)$ for this set, with the understanding that the set depends on the choice of a reduced expression for $w$. We will also write $p_J\in\B(\si,w)$ to simultaneously specify the corresponding subset $J\subset\{1,\dotsc,\ell\}$.

Given $p_J\in\B(\si,w)$ where $J=\{j_1<\dotsm<j_r\}$ we may construct the sequence of elements $(z_0,z_1,\dotsc,z_r)$
\begin{align}\label{E:zs}
z_k = \si w s_{\be_{j_1}}\dotsm s_{\be_{j_k}} = \si\pi^Y s_{i_1}\dotsm \hat{s_{i_{j_1}}}\dotsm \hat{s_{i_{j_k}}}\dotsm s_{i_\ell}
\end{align}
where the $\beta_j$ are given by \eqref{E:beta}. We note that $z_0=\si w$ and $z_r=\ept(p_J)$.

Define the signs $\ep_j\in\{\pm 1\}$ for $j\in J$ by the condition
\begin{align*}
z_k(\be_{j_k})\in\ep_{j_k}\De_+(Y)+\Z\de^Y
\end{align*}
Define $J^+=\{j\in J : \ep_j = 1\}$ and $J^-=\{j\in J : \ep_j = -1\}$. These are called the subsets of positive and negative folds, respectively.

\subsection{Quantum Bruhat graph}
\label{SS:QBG}
The quantum Bruhat graph $\QBG(\tX)$ is the directed graph with vertex set $W$ and edges $w\xrightarrow{\al} ws_\al$ labeled by $\al\in \Delta_+(Y)$ whenever either of the following holds:
\begin{itemize}
\item $\ell(ws_\al)=\ell(w)+1$ (Bruhat edges)
\item $\ell(ws_\al)=\ell(w)-\pair{2\rho^{\vee Y}}{\al}+1$ (quantum edges)
\end{itemize}
where $\rho^{\vee Y}=\frac{1}{2}\sum_{\al\in\De_+(Y)} \al^\vee$. We observe that
\begin{align}\label{E:QBG-simple}
\text{$w\xrightarrow{\al}ws_\al$ and $w\xleftarrow{\al}ws_\al$ if and only if $\al\in\De(Y)$ is simple.}
\end{align}

\begin{rem}
The quantum Bruhat graph was first defined (in the untwisted setting) in \cite{BFP}, where it was used in the study of the (small) quantum cohomology of flag varieties. As a matter of fact, the edges in $\QBG(\tX)$ are compatible with an order of Lusztig \cite{lu} on $W(\tX)$, projected to $W$ via the map $\dir$. The definition of $\QBG(\tX)$ given above is from \cite[\S4.1]{OS}, where it is also extended to the affine root systems $A_{2n}^{(2)}$ and $A_{2n}^{(2)\dagger}$.
\end{rem}

Let $\QB(\si,w)\subset\B(\si,w)$ be the set of all alcove walks $p_J\in \B(\si,w)$ such that, for the elements $(z_0,z_1,\dotsc,z_r)$ from \eqref{E:zs}, we have
\begin{align*}
\dir(z_0)\xrightarrow{-\bbe_1}\dir(z_1)\xrightarrow{-\bbe_2}\dotsm\xrightarrow{-\bbe_r}\dir(z_r)
\end{align*}
as a directed path in $\QBG(\tX)$. Let $\QBR(\si,w)\subset\B(\si,w)$ be the set of all $p_J\in\B(\si,w)$ such that
\begin{align*}
\dir(z_0)\xleftarrow{-\bbe_1}\dir(z_1)\xleftarrow{-\bbe_2}\dotsm\xleftarrow{-\bbe_r}\dir(z_r)
\end{align*}
is a directed path in $\QBG(\tX)$.

\subsection{Nonsymmetric Macdonald polynomials}

For any (semi-)ring $R$, we define the group (semi-)ring $R[X] = R[X^\la : \la\in X]$, where $X^\la X^\mu=X^{\la+\mu}$. Elements of $R[X]$ are finite linear combinations of monomials $X^\la$ with coefficients in $R$. For any $f\in R[X]$, we denote by $[X^\la]f\in R$ the coefficient of $X^\la$ in $f$.

Let $\K=\Q(q,v)$. The nonsymmetric Macdonald polynomials $\{E_\la(X;q,v)\}_{\la\in X}$ attached to the affine root system $\De(\tX)$ form a basis for $\K[X]$.\footnote{We will need the equal parameter specializations of the $E_\la$, i.e., we set all $v$-parameters equal to a single parameter $v$.} They were first defined in \cite{M2}, building on the works \cite{Ch-ann,O}. While coefficients of $E_\la(X;q,v)$ actually belong to $\Q(q,v^2)$, it is convenient for our purposes to work in terms of the parameter $v$. The $E_\la$ are lower triangular with respect to the monomial basis $\{X^\la\}_{\la\in X}$ of $\K[X]$:
\begin{align}
E_\la(X;q,v) = X^\la + \sum_{\mu\leq\la} c_{\la\mu}(q,v)X^\mu
\end{align}
with respect to the partial order on $X$ defined in \eqref{E:X-order} above.

\subsection{DAHA polynomial module}
Let $\LL=\Q(q^{\frac{1}{2e}},v)$, where $q^{\frac{1}{2e}}$ is a parameter such that $(q^{\frac{1}{2e}})^{2e}=q$. The group algebra $\LL[X]$ carries an important action of the double affine Hecke algebra (DAHA), which one may use to characterize and study the $E_\la(X;q,v)$.

The DAHA is
\begin{align*}
\HH = \LL[X]\otimes \H(W)\otimes\LL[Y]
\end{align*}
where $\H(W)=\oplus_{\si\in W} \LL T_\si$ is the Hecke algebra of $W$, all the tensor products are over $\LL$, and the tensor factors are subalgebras. The tensor factors do not commute with each other (i.e., the ``definition'' of $\HH$ above holds on the level of $\LL$-vector spaces, but not $\LL$-algebras), but the relations between them can be given explicitly (see, e.g., \cite[Definition~3.2.1]{Ch2}). The generators $\{T_i\}_{i\in I}$ of the Hecke algebra $\H(W)$ satisfy the braid relations for $W$ and the quadratic relations
\begin{align*}
(T_i-v)(T_i+v^{-1})=0.
\end{align*}
The $\LL$-basis elements of $\H(W)$ are defined as $T_\si=T_{i_1}\dotsm T_{i_\ell}$ for any reduced expression $\si=s_{i_1}\dotsm s_{i_\ell}$, and they are independent of this choice.

The algebra structure on $\HH$ can be described in terms of its faithful action on the polynomial module $\LL[X]$. Abstractly, $\LL[X]$ is the $\HH$-module induced from the trivial character of the affine Hecke subalgebra $\H(W)\otimes\LL[Y]$.
Explicitly, the generators of $\H(W)$ act on $\LL[X]$ by Demazure-Lusztig operators
\begin{align}\label{E:DL}
T_i = vs_i + \frac{v-v^{-1}}{X^{\al_i}-1}(s_i-1),
\end{align}
the $X^\la$ act by multiplication operators, and the $Y^\mu$ act by difference-reflection Dunkl operators. The important connection to Macdonald polynomials is that the $Y^\mu$ act diagonally\footnote{Here we see the need for the extension $\LL$ of the field $\K$. Technically, we only need $\Q(q^{1/e},v)$ at the moment, but later we will need the $2e$-th root of $q$.} on the $E_\la$ (and this gives one characterization of the $E_\la$):
\begin{align*}
Y^\mu\cdot E_\la(X;q,v) = q^{-(\la,\mu)}v^{2\pair{\si_\la^{-1}(\rho^{\vee Y})}{\mu}}E_\la(X;q,v)
\end{align*}
for all $\mu\in Y$ and $\la\in X$.

Generally, the elements of $\HH$ act on $\LL[X]$ by difference-reflection operators. These are built out of the natural action of $W(\tX)=Y\rtimes W$ on $\LL[X]$ by
\begin{align*}
(\mu,\si)\cdot X^\la = q^{-(\si(\la),\mu)}X^{\si(\la)}.
\end{align*}
(If we set $X^{\de^X}=q$, then this is nothing but $X^{(\mu,\si)\cdot(\la+0\de^X)}$.) For any $\mu\in Y$, let $\Ga_\mu$ be the $q$-shift operator
\begin{align*}
\Ga_\mu\cdot X^\la = (-\mu,\id)\cdot X^\la = q^{(\la,\mu)}X^\la.
\end{align*}
A difference-reflection operator is any element of $\End(\LL[X])$ given as a finite sum
\begin{align*}
\sum_{\si\in W}\sum_{\mu\in Y} f_{\mu,\si}(X;q,v)\,\Ga_\mu\,\si,
\end{align*}
where $f_{\mu,\si}(X;q,v)\in\LL(X):=\Frac(\LL[X])$, the field of rational functions acting by multiplication. Not all such operators preserve $\LL[X]$, but those from the polynomial module of $\HH$ do (e.g., \eqref{E:DL}).

We will need an additional structure on the polynomial module, namely the $\Q$-linear involution $*$ defined by $(X^\la)^*=X^{-\la}$, $(q^{1/(2e)})^*=q^{-1/(2e)}$, and $v^*=v^{-1}$. One may directly check that for any $f\in\LL[X]$ and $\si\in W$, we have
\begin{align}\label{E:*-T}
(T_\si\cdot f)^* = T_{\si^{-1}}^{-1}\cdot f^*.
\end{align}
See \cite[(3.2.19)]{Ch2} for this and the corresponding formulas for other elements of $\HH$.

\subsection{Ram-Yip formula}

An important insight of Ram and Yip \cite{RY} is that the action of DAHA on $\LL[X]$---in particular, the intertwiner construction of the $E_\la$ from \cite[Proposition~3.3.5]{Ch2}---can be exploited to both discover and prove a combinatorial formula for the $E_\la$ attached to any affine root system $\De(\tX)$.

The technique of \cite{RY} gives a combinatorial formula for the following more general elements of $\K[X]$. For any $\la\in X$ and $\si\in W$, let us define
\begin{align}\label{E:def-E-u}
E_\la^\si(X;q,v) &= v^{-\ell(\si\si_\la^{-1})+\ell(\si_\la^{-1})}T_\si\cdot E_\la(X;q,v).
\end{align}
In particular, $E_\la^\id(X;q,v)=E_\la(X;q,v)$ and by \cite[(3.3.24)]{Ch2}
\begin{align}\label{E:Tw0}
E_\la^{w_0}(X;q,v)=E_{-w_0\la}(X;q,v)^*.
\end{align}
For any fixed $\si\in W$, the polynomials $\{E_\la^\si(X;q,v)\}_{\la\in X}$ form a basis for the polynomial module (since $T_\si$ is invertible). We will see below that the $E_\la^\si$ are well-defined at $v=0$ and $v=\infty$. In type $A$, the $E_\la^\si$ are exactly the general basement nonsymmetric Macdonald polynomials studied in \cite{A}.\footnote{Note however that \cite{A} indexes these polynomials by $\si^{-1}$.}

\begin{rem}
Up to a power of $v$, the polynomial $E_\la^\si$ depends only on the pair of weights $(\la,\si(\la))$. However, we still prefer to separate $\si$ from $\la$ in the notation.
\end{rem}

The following is the Ram-Yip formula for the polynomials $E_\la^\si(X;q,v)$.

\begin{thm}[\cite{RY}]
For any $\la\in X$ and $\si\in W$,
\begin{align}\label{E:RY}
E_\la^\si(X;q,v) &= v^{-2\ell(\si\si_\la^{-1})}\sum_{p_J\in\B(\si,m_\lambda)}v^{\ell(\dir(p_J))}X^{\wt(p_J)}\prod_{j\in J}\frac{v^{-1}-v}{1-\xi_j}\prod_{j\in J^-}\xi_j
\end{align}
where $\xi_j=q^{\deg(\be_j)}v^{-\pair{2\rho^{\vee Y}}{\bbe_j}}$, for $\be_j$ defined in \eqref{E:beta}.
\end{thm}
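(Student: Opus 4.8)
The plan is to derive the Ram--Yip formula \eqref{E:RY} for $E_\la^\si$ from the known Ram--Yip formula for the nonsymmetric Macdonald polynomial $E_\la = E_\la^\id$ by applying the Demazure--Lusztig operator $T_\si$ and tracking the effect of a reduced word for $\si$ on alcove walks. First I would recall the base case: the Ram--Yip formula of \cite{RY} expresses $E_\la(X;q,v) = v^{-2\ell(\si_\la^{-1})}\sum_{p_J\in\B(\id, m_\la)} v^{\ell(\dir(p_J))} X^{\wt(p_J)}\prod_{j\in J}\frac{v^{-1}-v}{1-\xi_j}\prod_{j\in J^-}\xi_j$, i.e.\ the special case $\si=\id$ of \eqref{E:RY} (noting $\si\si_\la^{-1}=\si_\la^{-1}$ has length $\ell(\si_\la^{-1})$, so the prefactor $v^{-2\ell(\si\si_\la^{-1})}$ matches up with the normalization in \eqref{E:def-E-u}). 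The heart of the argument is then a single lemma: if $\si = s_i\si'$ with $\ell(\si) = \ell(\si')+1$, then applying $T_i$ (via the Demazure--Lusztig formula \eqref{E:DL}) to the Ram--Yip sum over $\B(\si', m_\la)$ produces the Ram--Yip sum over $\B(s_i\si', m_\la)$, up to the appropriate power of $v$.

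The key computational step is to understand how $T_i = vs_i + \frac{v-v^{-1}}{X^{\al_i}-1}(s_i-1)$ interacts with an alcove walk starting at $\si'$. The standard observation (going back to \cite{RY}, and to the alcove-walk philosophy of \cite{Ram}) is that prepending the letter $s_i$ to a reduced word for $m_\la$ — more precisely, working with $\si m_\la$ versus $\si' m_\la$ — corresponds to adding one extra initial step to each alcove walk, and the two terms of $T_i$ (the ``$vs_i$'' term and the ``residue'' term) correspond exactly to the two choices $\si_1 \in \{\si_0, \si_0 s_{i}\}$ at that new step, i.e.\ to whether the walk folds or crosses at the first step. One must check: (i) the monomial $X^{\wt(p_J)}$ transforms correctly under the $s_i$-action combined with the $q$-shifts encoded in the $\xi_j$ (here the action $(\mu,\si)\cdot X^\la = q^{-(\si(\la),\mu)}X^{\si(\la)}$ and the definition $\xi_j = q^{\deg(\be_j)}v^{-\pair{2\rho^{\vee Y}}{\bbe_j}}$ must conspire); (ii) the factor $\frac{v-v^{-1}}{X^{\al_i}-1}$ from $T_i$ becomes precisely a factor $\frac{v^{-1}-v}{1-\xi_{j_0}}$ for the new fold $j_0$, together with the extra $\xi_{j_0}$ when the fold is negative; and (iii) the length bookkeeping $v^{\ell(\dir(p_J))}$ and the overall power $v^{-2\ell(\si\si_\la^{-1})+\ell(\dir(p_J))}$ changes by exactly the power $v^{-\ell(\si\si_\la^{-1})+\ell(\si_\la^{-1})}$ prescribed by \eqref{E:def-E-u}. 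Since $T_\si = T_{i_1}\dotsm T_{i_\ell}$ for a reduced word $\si = s_{i_1}\dotsm s_{i_\ell}$, iterating the lemma $\ell(\si)$ times builds up the reduced expression $\uw$ for $m_{\la}$ prefixed by $\si$, and the accumulated sum is exactly $\sum_{p_J\in\B(\si,m_\la)}$; independence of the choice of reduced word for $\si$ follows from the braid relations satisfied by the $T_i$ (which hold on the polynomial module), or equivalently from well-definedness of $T_\si$.

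The main obstacle, I expect, is the sign/degree bookkeeping in step (ii)--(iii): keeping track of $J^+$ versus $J^-$ under the recursive step. When we prepend a step at the front of the walk, the roots $\be_j$ from \eqref{E:beta} for the \emph{old} walk get modified (they are $s_{i_\ell}\dotsm s_{i_{j+1}}(\al_{i_j})$, and the index set shifts), and the sign $\ep_j$ is determined by $z_k(\be_{j_k})\in \ep_{j_k}\De_+(Y)+\Z\de^Y$ where the $z_k$ involve $\si$ on the left — so changing $\si$ to $s_i\si$ can in principle flip signs. One must verify that in fact the relevant $z_k$'s and hence the signs $\ep_j$ are \emph{unchanged} for the old folds (because the new initial letter only affects $\dir(z_k)$ through left multiplication by $s_i$, which does not change membership in $\De_+(Y)+\Z\de^Y$ once $\ell(s_i\si)>\ell(\si)$ — here the hypothesis $\ell(\si\si') = \ell(\si)+\ell(\si')$-type reducedness is essential), while the new fold $j_0$ gets its sign from whether $s_i(\al_i) = -\al_i$ lands in $\pm\De_+(Y)$, cleanly giving the $\frac{v^{-1}-v}{1-\xi_{j_0}}$ factor with the correct $\xi_{j_0}$-correction. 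An alternative, cleaner route that I would fall back on if the direct induction gets unwieldy: invoke the intertwiner/recursion characterization of the $E_\la$ from \cite[Proposition~3.3.5]{Ch2} exactly as \cite{RY} do in the case $\si=\id$, observing that the only modification needed is to allow an arbitrary starting point $\si$ for the alcove walks rather than the fundamental alcove, which is precisely what the extra operator $T_\si$ effects — the entire Ram--Yip machinery then goes through verbatim. Either way, the statement is essentially a mild generalization of \cite[Theorem~3.1]{RY}, and I would present it as such, emphasizing that no genuinely new idea beyond \cite{RY} is required.
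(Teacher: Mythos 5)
First, note that the paper does not prove this statement at all: it is quoted with the attribution [\cite{RY}] (the general-basement version with the normalization \eqref{E:def-E-u} is spelled out in \cite{OS}), so there is no in-paper argument to compare against; what you are reconstructing is the Ram--Yip proof itself. Your fallback route --- rerun the intertwiner expansion of \cite[Proposition~3.3.5]{Ch2} exactly as in \cite{RY}, but with the initial term of the recursion being $T_\si$ rather than $T_\id$, so that the alcove walks simply start at $\si$ --- is the correct and standard argument, and had you led with it there would be little to object to.

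Your primary route, however, has a structural gap. By the paper's definition, an alcove walk in $\B(\si,m_\la)$ has exactly $\ell(m_\la)$ steps regardless of the starting point $\si$; changing the basement from $\si'$ to $s_i\si'$ does \emph{not} add an initial step to the walk. So the proposed dictionary ``the two terms of $T_i$ correspond to folding or crossing at the new first step'' does not match the objects appearing in \eqref{E:RY}: both $\B(\si',m_\la)$ and $\B(s_i\si',m_\la)$ have $2^{\ell(m_\la)}$ elements, and there is no ``new fold $j_0$'' to absorb the residue term of $T_i$. Relatedly, $T_i$ applied to a single monomial $X^\mu$ produces $vX^{s_i\mu}$ plus a geometric sum $(v-v^{-1})(X^{s_i\mu}-X^\mu)/(X^{\al_i}-1)$ with roughly $|\pair{\al_i^\vee}{\mu}|$ monomials, not two terms, so a term-by-term matching of the $T_i$-image of the $\B(\si',m_\la)$-sum with the $\B(s_i\si',m_\la)$-sum would require substantial recombination that your sketch does not address. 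Finally, the sign claim is false as stated: left multiplication by $s_i$ does not preserve $\De_+(Y)+\Z\de^Y$ --- it sends $\al_i$ to $-\al_i$ --- and the hypothesis $\ell(s_i\si')>\ell(\si')$ concerns $(\si')^{-1}(\al_i)$, which is irrelevant to whether $\overline{z_k(\be_{j_k})}=\pm\al_i$; so the signs $\ep_j$ of old folds genuinely can flip, and the redistribution of terms between $J^+$ and $J^-$ is part of what the intertwiner proof handles automatically and your induction does not. In short: keep the second route, discard the first.
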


The Ram-Yip formula immediately gives the following:

\begin{cor}\label{C:TE-supp}
We have $[X^{\si(\la)}]E_\la^\si=1$, and $[X^\mu]E_\la^\si\neq 0$ only if $\si^{-1}(\mu)\leq\la$. In particular, $[X^{\si(\la_-)}]E_\la^\si=\de_{\la,\la_-}$.
\end{cor}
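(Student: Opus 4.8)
The plan is to extract everything from the Ram-Yip formula \eqref{E:RY}. First I would analyze the weights $\wt(p_J)$ that appear: for an alcove walk $p_J \in \B(\si, m_\la)$, the endpoint is $\ept(p_J) = z_r$ from \eqref{E:zs}, obtained from $\si m_\la = \si t_\la \si_\la$ by deleting some of the simple reflections in a reduced word for $m_\la$. Since $\wt(p_J) = \wt(\ept(p_J))$ and $\ept(p_J)$ is obtained from $\si m_\la$ by right multiplication by simple reflections (which, being direction-changing, only alter the weight by elements of the root lattice in a controlled way), one sees $\wt(p_J) = \si(\la) - (\text{nonnegative combination related to the folds})$. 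The cleanest route is to observe that the only alcove walk with $J = \emptyset$ has endpoint $\si m_\la$, weight $\si(\wt(m_\la)) $ — but one must be careful: $\wt(\si t_\la \si_\la)$. Actually $m_\la = t_\la \si_\la$ has weight $\la$, so the natural ``top'' monomial is $X^{\si(\la)}$ coming from the straight walk, and all other walks contribute lower monomials in the order \eqref{E:X-order}.

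Second, for the claim $[X^{\si(\la)}]E_\la^\si = 1$: the straight walk $p_\emptyset$ contributes $v^{\ell(\dir(p_\emptyset))} X^{\wt(p_\emptyset)}$ with empty products, and $\dir(p_\emptyset) = \dir(\si m_\la) = \si \si_\la$, so $\ell(\dir(p_\emptyset)) = \ell(\si\si_\la)$. Combined with the prefactor $v^{-2\ell(\si\si_\la^{-1})}$ — here I would need to reconcile the $\ell(\si\si_\la^{-1})$ in the prefactor with $\ell(\si\si_\la)$; note the definition \eqref{E:def-E-u} has $\si_\la^{-1}$ and a correction term $v^{\ell(\si_\la^{-1})}$, so tracking the $v$-powers carefully should yield total coefficient $1$ for $X^{\si(\la)}$. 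I also need to argue no other walk $p_J$ with $J \ne \emptyset$ contributes to $X^{\si(\la)}$: this follows because each fold strictly decreases the weight in the partial order on $X$, i.e. $\si^{-1}(\wt(p_J)) < \la$ for $J \ne \emptyset$. This is the second assertion: $[X^\mu]E_\la^\si \ne 0$ only if $\si^{-1}(\mu) \le \la$, which is exactly the statement that every $\wt(p_J)$ satisfies $\si^{-1}(\wt(p_J)) \le \la$, with equality only for the straight walk.

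Third, for the last sentence $[X^{\si(\la_-)}]E_\la^\si = \de_{\la, \la_-}$: apply the previous bound with $\mu = \si(\la_-)$, so $\si^{-1}(\mu) = \la_-$, and we need $\la_- \le \la$ in the order \eqref{E:X-order}. By the remark after \eqref{E:X-order}, $\mu \le \la$ with $\mu_- = \la_-$ forces $\si_\mu \ge \si_\la$; taking $\mu = \la_-$ gives $\si_{\la_-} = \id \ge \si_\la$, which holds only when $\si_\la = \id$, i.e. $\la = \la_-$. So $\la_- \le \la$ iff $\la = \la_-$, and in that case the coefficient is $1$ by the first part (with $\la_- = \la$), otherwise it is $0$.

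The main obstacle I anticipate is bookkeeping the powers of $v$ and the precise relationship between $\si_\la$, $\si_\la^{-1}$, $\dir(m_\la)$, and the length additivity $\ell(\si m_\la) = \ell(\si \si_\la) + \ell(t_\la)$ or similar — getting the normalization in \eqref{E:def-E-u} to cancel exactly against the $v^{\ell(\dir(p_\emptyset))}$ term and the prefactor $v^{-2\ell(\si\si_\la^{-1})}$. The combinatorial content (that folds strictly lower the weight) is essentially the statement that alcove walks of type $m_\la$ starting at $\si$ have endpoints whose weights are $\le \la$ after applying $\si^{-1}$; this should be standard from the theory of alcove walks or can be deduced from the known triangularity of $E_\la$ together with the fact that $T_\si$ acts by operators of the form $v s_i + (\dots)(s_i - 1)$ which do not raise the weight support beyond $\si(\mathrm{supp})$. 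I would likely prove it directly via the $z_k$ recursion, noting $z_{k}$ is obtained from $z_{k-1}$ by right multiplication by a reflection $s_{\be_{j_k}}$ and a fold at position $j_k$ replaces $s_{i_{j_k}}$ by the identity, strictly shortening the ``translation part'' contribution to the weight.
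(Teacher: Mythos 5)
Your proposal is correct and follows essentially the same route as the paper: read off from the Ram--Yip formula that every walk $p_J\in\B(\si,m_\la)$ has $\si^{-1}(\wt(p_J))\le\la$ (with equality only for $p_\emptyset$, since deleting letters from a reduced word for $m_\la$ gives an element $u<m_\la$ in Bruhat order and hence $m_{\wt(u)}\le u<m_\la$), and then deduce the last assertion from the fact that $\la_-\le\la$ forces $\la=\la_-$ in the order \eqref{E:X-order}. The $v$-power bookkeeping you worry about is exactly what the normalization in \eqref{E:def-E-u} is designed to make trivial, and the paper's own proof does not belabor it either.
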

\begin{proof}
If $p\in\B(\si,m_\la)$, then $\wt(p)=\si(\nu)$ where $\nu\le\la$. Hence $\si^{-1}(\wt(p))=\nu\le\la$. If $\wt(p)=\si(\la_-)$ for some $p$, then $\la_-=\si^{-1}(\wt(p))\le \la\le\la_-$. This arises only when $\la=\la_-$ and $p=p_\emptyset$ (for then $\wt(p_J)<\la_-$ whenever $J\neq\emptyset$).
\end{proof}

Define $\qwt(p_J) = \sum_{j\in J^-}\be_j$.

\begin{prop}[{\cite[Corollary~4.4]{OS}}]
\label{P:RY-limit-0}
For any $\la\in X$ and $\si\in W$,
\begin{align}\label{E:RY-limit-0}
E_\la^\si(X;q,0) &= \sum_{p\in \QB(\si,m_\la)} q^{\deg(\qwt(p))}X^{\wt(p)}.
\end{align}
\end{prop}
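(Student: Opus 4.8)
The plan is to obtain \eqref{E:RY-limit-0} by taking the limit $v\to 0$ in the Ram-Yip formula \eqref{E:RY} and tracking which alcove walks survive. First I would rewrite the summand of \eqref{E:RY} in a form where the $v$-dependence is explicit. For $p_J\in\B(\si,m_\la)$ we have the factor $v^{-2\ell(\si\si_\la^{-1})+\ell(\dir(p_J))}$ out front (together with the walk-independent normalization), and then for each fold $j\in J$ a factor $\frac{v^{-1}-v}{1-\xi_j}$, together with $\xi_j$ for each $j\in J^-$, where $\xi_j = q^{\deg(\be_j)}v^{-\pair{2\rho^{\vee Y}}{\bbe_j}}$. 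The key point is that $\deg(\be_j)\ge 0$ always (since $\be_j\in\Inv(m_\la)\subset\De_+(\tY)$, and an affine root $\al+m\de$ in the positive system has $m\ge 0$), and $\pair{2\rho^{\vee Y}}{\bbe_j}$ is a positive even integer for a Bruhat-type step and a large negative quantity for a quantum-type step; this is exactly the numerology built into the definition of $\QBG(\tX)$ in \S\ref{SS:QBG}. So each $\xi_j$ either vanishes, stays finite and nonzero, or blows up as $v\to 0$, and the whole product over $J$ and $J^-$ has a well-defined leading $v$-power that one computes step by step along the walk.

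Next I would make this precise using the sequence $(z_0,\dots,z_r)$ of \eqref{E:zs} and the signs $\ep_{j_k}$ defined by $z_k(\be_{j_k})\in\ep_{j_k}\De_+(Y)+\Z\de^Y$. The directions $\dir(z_k)$ are related by $\dir(z_{k-1}) = \dir(z_k)s_{\bbe_{j_k}}$ (up to translation; this follows from \eqref{E:zs}), so the sign $\ep_{j_k}$ records precisely whether the step $\dir(z_k)\to\dir(z_{k-1})$ in $W$ along the reflection $s_{-\bbe_{j_k}} = s_{\bbe_{j_k}}$ is length-increasing or length-decreasing, i.e.\ whether, reading in the forward direction $\dir(z_{k-1})\to\dir(z_k)$, it is a potential Bruhat or quantum edge of $\QBG(\tX)$. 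The factor $\frac{v^{-1}-v}{1-\xi_j}$ contributes $v^{-1}$ at leading order when $\xi_j\to 0$ and contributes $\xi_j^{-1}\cdot(-v^{-1}) \sim$ (a $v$-power $\ge 1$) when $\xi_j\to\infty$; combined with the extra $\xi_j$ for negative folds and the global $v^{-2\ell(\si\si_\la^{-1})}v^{\ell(\dir(p_J))}$, one checks by induction on $r$ that the total $v$-exponent of the $p_J$-summand equals the quantity $\sum_k(\text{something nonnegative})$, which is $\ge 0$ for every $p_J$ and equals $0$ exactly when, at each step, the walk takes a genuine $\QBG(\tX)$-edge in the forward direction $\dir(z_{k-1})\xrightarrow{-\bbe_{j_k}}\dir(z_k)$ — i.e.\ exactly when $p_J\in\QB(\si,m_\la)$. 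For such surviving walks, the residual $q$-power is collected from the $\deg(\be_j)$ appearing in $\xi_j$ for $j\in J^-$, which is precisely $q^{\deg(\qwt(p))}$ since $\qwt(p_J)=\sum_{j\in J^-}\be_j$; and the coefficient is $1$ because the $v^{-1}$ from each fold cancels the $v$ and the signs from $\xi_j^{-1}$ and the extra $\xi_j$ cancel. This yields \eqref{E:RY-limit-0}.

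The main obstacle, and the part requiring genuine care rather than bookkeeping, is the inductive verification that the leading $v$-exponent of a summand is nonnegative with equality iff $p_J\in\QB(\si,m_\la)$. This is where the combinatorics of the quantum Bruhat graph must be used in the form of a lemma of Lenart-Postnikov / the "deletion" properties of $\QBG$: one needs that at a fold $j_k$, the behavior of $\pair{2\rho^{\vee Y}}{\bbe_{j_k}}$ relative to a length change is governed by whether $\dir(z_{k-1})\xrightarrow{-\bbe_{j_k}}\dir(z_k)$ is an edge, and one must handle the affine degree $\deg(\be_{j_k})$ correctly (a fold on an affine root of positive degree always contributes nonnegatively to the $v$-exponent through $\xi_j$, regardless of the finite part). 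Fortunately, since Proposition~\ref{P:RY-limit-0} is quoted directly from \cite[Corollary~4.4]{OS}, the honest path is to cite that reference: the proof amounts to the $v\to 0$ specialization of \eqref{E:RY} carried out in \cite{OS}, and I would present the above as the structure of that argument rather than reprove the $\QBG$-numerology from scratch.

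\begin{proof}
This is \cite[Corollary~4.4]{OS}. We sketch the argument. Starting from the Ram-Yip formula \eqref{E:RY}, one specializes $v\to 0$. For $p_J\in\B(\si,m_\la)$, each $\xi_j = q^{\deg(\be_j)}v^{-\pair{2\rho^{\vee Y}}{\bbe_j}}$ has a definite leading power of $v$; using the sequence $(z_0,\dots,z_r)$ of \eqref{E:zs} and the signs $\ep_{j_k}$ (which record whether the step $\dir(z_{k-1})\to\dir(z_k)$ is length-increasing or length-decreasing along $s_{\bbe_{j_k}}$), one checks by induction on $r$ that the total exponent of $v$ in the $p_J$-summand is nonnegative, with equality if and only if each step $\dir(z_{k-1})\xrightarrow{-\bbe_{j_k}}\dir(z_k)$ is an edge of $\QBG(\tX)$, i.e.\ $p_J\in\QB(\si,m_\la)$. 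For such $p_J$ the $v\to 0$ limit of the summand is $q^{\deg(\qwt(p_J))}X^{\wt(p_J)}$, where we used $\qwt(p_J)=\sum_{j\in J^-}\be_j$ and that the $v$-powers and signs from the folds cancel. Summing over $\QB(\si,m_\la)$ gives \eqref{E:RY-limit-0}.
\end{proof}
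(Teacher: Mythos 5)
Your proposal takes essentially the same route as the paper: Proposition~\ref{P:RY-limit-0} is quoted directly from \cite[Corollary~4.4]{OS} with no independent proof given in the text, and your sketch of the $v\to 0$ specialization of \eqref{E:RY} mirrors the inductive fold-by-fold argument the paper itself spells out for the $v=\infty$ analogue (Proposition~\ref{P:RY-limit-inf}). One small correction to your preliminary discussion: by \eqref{E:inv-m} every $\bbe_j$ is a \emph{negative} root, so $\pair{2\rho^{\vee Y}}{\bbe_j}<0$ and every $\xi_j$ tends to $0$ as $v\to 0$ --- none of them blows up; the dichotomy between Bruhat and quantum edges is detected not by the behavior of $\xi_j$ alone but by the balance between the $v^{-1}$ from each fold factor, the prefactor $v^{\ell(\dir(p_J))}$, and the positive $v$-powers $v^{-\pair{2\rho^{\vee Y}}{\bbe_j}}$ attached to negative folds --- though this does not affect your conclusion or the final proof you give.
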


Below we will also need the existence of $E_\la^\si$ at $v=\infty$. We now derive a formula analogous to \eqref{E:RY-limit-0} at $v=\infty$, which shows in particular that the $E_\la^\si$ are well-defined at this specialization. By $v=\infty$, we mean that all coefficients are expressed in terms of $v^{-1}$ and then we set $v^{-1}=0$. The special case $\si=\id$ of the following formula was proved in \cite[Proposition~5.4]{OS}.

Define $\qwt^*(p_J)=\sum_{j\in J^+}\be_j$.

\begin{prop}\label{P:RY-limit-inf}
For any $\la\in X$ and $\si\in W$,
\begin{align}
E_\la^\si(X;q^{-1},\infty) &= \sum_{p\in \QBR(\si,m_\la)} q^{\deg(\qwt^*(p))}X^{\wt(p)}.
\end{align}
\end{prop}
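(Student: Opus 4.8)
The plan is to mimic the proof of the $v=0$ case (Proposition~\ref{P:RY-limit-0}, i.e.\ \cite[Corollary~4.4]{OS}), working directly from the Ram-Yip formula \eqref{E:RY} and tracking the leading behavior in $v^{-1}$ rather than in $v$. First I would rewrite \eqref{E:RY} after substituting $q\mapsto q^{-1}$ and expressing every coefficient as a Laurent series in $v^{-1}$. The key quantity to analyze is the valuation in $v^{-1}$ of the contribution of a single alcove walk $p_J\in\B(\si,m_\la)$, namely the product
\[
v^{-2\ell(\si\si_\la^{-1})}\,v^{\ell(\dir(p_J))}\prod_{j\in J}\frac{v^{-1}-v}{1-\xi_j}\prod_{j\in J^-}\xi_j,
\]
with $\xi_j=q^{-\deg(\be_j)}v^{-\pair{2\rho^{\vee Y}}{\bbe_j}}$. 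The factor $\dfrac{v^{-1}-v}{1-\xi_j}$ must be expanded as a power series in $v^{-1}$; since $-\pair{2\rho^{\vee Y}}{\bbe_j}$ is positive exactly when $\bbe_j\in\De_-(Y)$ and negative when $\bbe_j\in\De_+(Y)$, the leading term of this factor is either $-v$ (giving a \emph{positive} power of $v$, hence a ``bad'' contribution that must cancel) when $\bbe_j\in\De_+(Y)$, or $-v^{-1}$ when $\bbe_j\in\De_-(Y)$, or the more delicate case $\deg(\be_j)=0$ where $\xi_j$ is a constant and the factor is $O(v)$. The upshot, exactly dual to the $v=0$ analysis in \cite{OS}, is that the contribution of $p_J$ has nonnegative valuation in $v^{-1}$, and it has \emph{zero} valuation (i.e.\ survives at $v=\infty$) precisely when $p_J\in\QBR(\si,m_\la)$; in that case its leading coefficient is $q^{\deg(\qwt^*(p_J))}X^{\wt(p_J)}$.

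The second ingredient is the combinatorial translation of ``the contribution survives'' into ``$p_J\in\QBR$.'' Here I would reuse the sign analysis already set up in the excerpt: for $p_J\in\B(\si,w)$ with $J=\{j_1<\dots<j_r\}$ and elements $z_0,\dots,z_r$ from \eqref{E:zs}, the sign $\ep_{j_k}$ is determined by whether $z_k(\be_{j_k})$ lies above or below, and by \eqref{E:QBG-simple} the step $\dir(z_{k-1})\xleftarrow{-\bbe_{j_k}}\dir(z_k)$ being an edge of $\QBG(\tX)$ corresponds exactly to the right condition on $\ell(\dir(z_{k-1}))-\ell(\dir(z_k))$, which in turn pins down how $\ell(\dir(p_J))$ compares to $\ell(\si)$ and how many of the $\be_j$ contribute which power. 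Concretely, one shows that the total $v$-power of the $p_J$-term equals
\[
-2\ell(\si\si_\la^{-1})+\ell(\dir(p_J))-\#\{j\in J:\bbe_j\in\De_+(Y)\}\cdot 1 + \#\{\dots\} - \sum_{j\in J^-}\pair{2\rho^{\vee Y}}{\bbe_j} + (\text{fold bookkeeping}),
\]
and that this is $\le 0$ with equality iff all the quantum/Bruhat-edge conditions defining $\QBR$ hold; the $q$-power that then remains is $\sum_{j\in J^-}\deg(\be_j)$ wait — I must recompute, since for $\QBR$ the relevant set is $J^+$. Indeed after the substitution $q\mapsto q^{-1}$ the surviving $q$-exponent becomes $\sum_{j\in J^+}\deg(\be_j)=\deg(\qwt^*(p_J))$, matching the claimed formula.

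I expect the main obstacle to be exactly this valuation computation: carefully combining the three sources of $v$-powers (the overall prefactor $v^{-2\ell(\si\si_\la^{-1})}$, the $v^{\ell(\dir(p_J))}$, and the product over $J$ with its $\xi_j$'s), handling the degenerate case $\deg(\be_j)=0$ where $\xi_j\in\Q(v)^\times$ is a genuine constant, and then identifying the vanishing-order condition with membership in $\QBR(\si,m_\la)$ via the quantum Bruhat graph edge relations. A clean way to organize this is to invoke the $*$-involution: by \eqref{E:Tw0} and \eqref{E:*-T} one has, for $\si=\id$, $E_\la^{w_0}(X;q,v)=E_{-w_0\la}(X;q,v)^*$, and more generally the $v=\infty$ specialization of $E_\la^\si$ is, up to the $*$-involution (which sends $q\mapsto q^{-1}$, $v\mapsto v^{-1}$, $X^\mu\mapsto X^{-\mu}$) and a relabeling, the $v=0$ specialization of some $E_{\la'}^{\si'}$; then Proposition~\ref{P:RY-limit-0} applies and one only needs to check that $*$ carries $\QB(\si',m_{\la'})$ onto $\QBR(\si,m_\la)$ and $\qwt\mapsto\qwt^*$ correctly. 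This reduces the new content to a bijection of alcove-walk sets intertwining the two quantum-Bruhat-graph conditions, which is the cleanest route; the fallback, if the $*$-bookkeeping proves unwieldy for general $\si$, is the direct valuation argument sketched above, following \cite[\S4--5]{OS} line by line with all inequalities reversed.
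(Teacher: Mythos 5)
Your primary route---expanding each Ram--Yip term as a Laurent series in $v^{-1}$, showing its valuation is nonnegative, and identifying the valuation-zero terms with $\QBR(\si,m_\la)$ via the quantum Bruhat graph length conditions---is exactly the paper's proof; the paper organizes the valuation estimate as an induction on $|J|$, adding the maximal fold $M$ and using the fact from \cite{OS} that $M\in\hat{J}^{\pm}$ according to whether $\dir(p_{\hat J})\gtrless\dir(p_J)$, which sidesteps the closed-form ``fold bookkeeping'' you left unfinished. Your alternative via the $*$-involution and Proposition~\ref{P:RY-limit-0} is precisely the route the paper mentions in the remark following its proof (and is how \cite[Proposition~5.4]{OS} handles $\si=\id$).
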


\begin{proof}
Let $\ord : \K\to\Z$ denote the order with respect to $v^{-1}$. For $p\in\B(\si,m_\la)$, let $\ord(p)$ be the order of the corresponding term in the Ram-Yip formula (i.e., the order of its coefficient, which belongs to $\Q(q,v)$). For the unfolded walk, we have $\ord(p_\emptyset)=\ell(\si_\la^{-1})-\ell(\si\si_\la^{-1})$.

Now suppose $J\subsetneq\{1,\dotsc,\ell\}$ and $\hat{J}=J\cup\{M\}$, with $M$ being the maximal element of $\hat{J}$. As shown in \cite[Proof of Corollary~4.4]{OS}, we have $M\in\hat{J}^+$ iff $\dir(p_{\hat{J}})>\dir(p_J)$, and $M\in\hat{J}^-$ iff $\dir(p_{\hat{J}})<\dir(p_J)$.

Suppose $M\in\hat{J}^+$. Then
\begin{align*}
\ord(p_{\hat{J}})-\ord(p_J)=(\ell(\dir(p_{J}))-\ell(\dir(p_{\hat{J}})))-1+\pair{2\rho^{\vee Y}}{-\bbe_M}\ge 0
\end{align*}
since $\ell(s_{-\bbe_M})\le \pair{2\rho^{\vee Y}}{-\bbe_M}-1$.

Suppose $M\in\hat{J}^-$. Then
\begin{align*}
\ord(p_{\hat{J}})-\ord(p_J)=(\ell(\dir(p_{J}))-\ell(\dir(p_{\hat{J}})))-1\ge 0.
\end{align*}

By induction on $|J|$, it follows that the $v=\infty$ specialization of $E_\la^\si$ is well-defined. A term on from the Ram-Yip formula survives in this limit iff the corresponding inequality above is an equality for each fold in the alcove walk. These are exactly the paths in $\QBR(\si,m_\la)$. The power of $q$ is easily found.
\end{proof}

\begin{rem}
It is also possible to deduce Proposition~\ref{P:RY-limit-inf} from Proposition~\ref{P:RY-limit-0} using \eqref{E:Tw0}, as is done in the proof of \cite[Proposition~5.4]{OS}.
\end{rem}

\section{Generalized global Weyl modules}
\label{S:weyl}

Now we turn to Weyl modules. First we recall some known results on classical Weyl modules (local and global) and generalized local Weyl modules. Our main results on generalized global Weyl modules are stated and proved in \S\ref{SS:glob} and \S\ref{SS:struct}.

\subsection{Classical Weyl modules}
Let $\fg$ be a finite-dimensional simple Lie algebra over $\C$ with Cartan decomposition $\fg=\fn_+\oplus\fh\oplus \fn_-$.
We denote by $\Delta$ the set of roots and by $\Delta_+$ (resp. $\Delta_-$) the subsets of positive (resp. negative)
roots of $\fg$ with respect to the Cartan subalgebra $\fh$. For $\al\in\Delta_+$ we denote by $e_\al\in\fn_+$
and $f_{-\al}\in\fn_-$ the corresponding root vectors. Let $n=\dim\fh$ be the rank of $\fg$
and let $\al_i$, $\al_i^\vee$, $\om_i$, $i=1,\dots,n$, be the simple roots, simple coroots  and fundamental weights.
In what follows we use the shorthand notation $f_i=f_{-\al_i}$, $e_i=e_{\al_i}$.
The commutators $h_i=[e_i,f_i]$, $i=1,\dots,n$ span the Cartan subalgebra $\fh$.

We denote by $Q$ the root lattice, generated by simple roots, and by $X$ the weight lattice, generated by
the fundamental weights. In particular, $X$ contains $X_+$, the set of dominant integral weights.
Each $\la=\sum_{i=1}^n m_i\om_i\in X_+$, $m_i\in\bZ_{\ge 0}$ is the highest weight of an irreducible
$\fg$ module $V_\la$.

For $\la\in X_+$, let $W(\la)$ and $\W(\la)$ be the local and global Weyl module of highest weight $\la$ \cite{CL,FL1,FL2,CFK}.
Both $W(\la)$ and $\W(\la)$ are cyclic representations of the current
algebra $\fg\T\bC[t]$ with a cyclic vector $v$ of weight $\la$ ($(h\T 1)v=\la(h)v$ for all $\h\in\fh$).
The defining relations of $\W(\la)$ are
\[
(\fn_+\T t\C[t])v=0,\qquad
(f_\alpha\otimes 1)^{-\langle \alpha^\vee, \lambda \rangle+1} v=0 \quad \text{for all $\alpha \in \Delta_-$}.
\]
To define the local Weyl module $W(\la)$ one adds the extra relation $(\fh\T t\bC[t])v=0$.
Let $A(\la)={\rm U}(\fh\T t\bC[t])v\subset \W(\la)$ be the space of highest weight vectors.
The space $A(\la)$ can be identified with the quotient algebra
of the polynomial ring ${\rm U}(\fh\T t\bC[t])$ modulo the ideal of polynomials vanishing on $v$.
Hence $A(\la)$ inherits the natural grading with respect to the degree of $t$.
An important observation (see \cite{CP,CFK}) is that the global Weyl module can be equipped with the structure of
a bimodule over $\fg\T\bC[t]\times A(\la)$, where the action of an element $a\in A(\la)$ on a vector $rv\in \W(\la)$,
$r\in {\rm U}(\fg\T\bC[t])$, is given by $r(av)$.

Let $\la=\sum_{i=1}^n m_i\om_i$.
In order to describe $A(\la)$, we consider the ring $B(\la)$ of polynomials in the variables
$x^i_j$ ($i=1,\dots,n$ and $j=1,\dots,m_i$) which are invariant under the natural action of the product of the symmetric
groups $S_{m_1}\times\dots\times S_{m_n}$  ($S_{m_i}$ permutes the variables $x^i_j$ with fixed $i$).
Hence $B(\la)=\otimes_{i=1}^n \bC[x_1^i,\dots,x_{m_i}^i]^{S_{m_i}}$.
The last piece of notation we need is
\[
(q)_\la=\prod_{i=1}^n (q)_{m_i},\quad (q)_m=\prod_{j=1}^m (1-q^j).
\]

The following fundamental results can be found in \cite{CFK}.
\begin{prop}
The assignment
\[
h_i\T t^k\mapsto 1\otimes\dotsm\otimes 1\otimes ((x^i_1)^k+\dotsm +(x^i_{m_i})^k)\otimes 1\otimes\dots\otimes 1
\]
induces an isomorphism of graded rings between $A(\la)$ and $B(\la)$.  In particular,
$\ch A(\la)=1/(q)_\la.$
\end{prop}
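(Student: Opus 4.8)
The plan is to write down the homomorphism, prove surjectivity by an elementary symmetric-function argument together with a Garland-type identity, and then prove injectivity by a density argument using evaluation modules. Write $\U(\fh\T t\bC[t])=\bigotimes_{i=1}^n\bC[h_i\T t^k:k\ge 1]$ and let $\theta$ be the algebra homomorphism onto $B(\la)$ determined by $h_i\T t^k\mapsto p_k(x^i):=(x_1^i)^k+\dots+(x_{m_i}^i)^k$; this is surjective since the power sums generate each factor $\bC[x_1^i,\dots,x_{m_i}^i]^{S_{m_i}}$. The assertion is that $\theta$ has the same kernel as the defining surjection $\pi\colon\U(\fh\T t\bC[t])\twoheadrightarrow A(\la)$, $u\mapsto uv$, whose kernel consists of the polynomials vanishing on the cyclic vector $v$ of $\W(\la)$. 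Granting this, $\theta$ descends to the claimed graded isomorphism $A(\la)\xrightarrow{\ \sim\ }B(\la)$ (graded with $\deg x_j^i=1$, since $h_i\T t^k$ has $t$-degree $k$), and the character statement follows because $\bC[x_1^i,\dots,x_{m_i}^i]^{S_{m_i}}$ is a polynomial ring on $e_1(x^i),\dots,e_{m_i}(x^i)$ with $\deg e_j(x^i)=j$, so $\ch B(\la)=\prod_{i=1}^n\prod_{j=1}^{m_i}(1-q^j)^{-1}=1/(q)_\la$.

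First I would prove $\ker\theta\subseteq\ker\pi$, which yields a surjection $\Psi\colon B(\la)\twoheadrightarrow A(\la)$. Introduce $P^{(i)}_k\in\U(\fh\T t\bC[t])$ by $\sum_{k\ge 0}P^{(i)}_k u^k:=\exp\!\big(-\sum_{r\ge 1}\tfrac{1}{r}(h_i\T t^r)u^r\big)$. A short generating-function computation gives $\theta(P^{(i)}_k)=(-1)^k e_k(x^i)$, which vanishes for $k>m_i$; and by the classical structure of symmetric functions (Newton's identities plus the algebraic independence of $e_1(x^i),\dots,e_{m_i}(x^i)$) the ideal $\ker\theta$ is generated by the $P^{(i)}_k$ with $1\le i\le n$ and $k>m_i$. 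Hence it suffices to show $P^{(i)}_k v=0$ in $\W(\la)$ for all $k>m_i$. This is the crux of the argument, and it is handled by Garland's identities: for the $\fsl_2$-triple $(e_i,h_i,f_i)$ one has in $\U(\fsl_2\T\bC[t])$ a congruence of the shape $(e_i\T t)^{(s)}(f_i\T 1)^{(s)}\equiv(-1)^s P^{(i)}_s$ modulo the left ideal generated by $\fn_+\T t\bC[t]$; applying this to $v$ and using the defining relations $(f_i\T 1)^{m_i+1}v=0$ and $(\fn_+\T t\bC[t])v=0$ gives $P^{(i)}_s v=0$ for $s\ge m_i+1$. (This is essentially the computation already used for classical Weyl modules in \cite{CP,CL,CFK}.)

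For the injectivity of $\Psi$ I would use evaluation modules. For each tuple $\ba=(a_l^i)$ of complex numbers ($1\le i\le n$, $1\le l\le m_i$), set $V_\ba=\bigotimes_{i,l}\mathrm{ev}_{a_l^i}^*V_{\om_i}$, the tensor product of evaluation $\fg\T\bC[t]$-modules, and let $v_\ba^+$ be the tensor product of the highest weight vectors. One checks directly that $v_\ba^+$ has $\fg$-weight $\la$, is annihilated by $\fn_+\T\bC[t]$, and satisfies $(f_i\T 1)^{m_i+1}v_\ba^+=0$: only the $m_i$ tensor slots of type $\om_i$ are affected by $f_i$, and $f_i$ squares to zero on the highest weight vector of each such slot, so the relation follows by a multinomial count. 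Hence $V_\ba$ is a quotient of $\W(\la)$ via $v\mapsto v_\ba^+$. Since $\fh\T t\bC[t]$ preserves $\fh\T 1$-weights and the $\la$-weight space of $V_\ba$ is spanned by $v_\ba^+$, the action of $\U(\fh\T t\bC[t])$ on $v_\ba^+$ is by a character, and because $uv\mapsto uv_\ba^+$ this character kills $\ker\pi$, hence defines $\chi_\ba\colon A(\la)\to\bC$ with $\chi_\ba(\overline{h_i\T t^k})=(a_1^i)^k+\dots+(a_{m_i}^i)^k$. Thus $\chi_\ba\circ\Psi$ is the evaluation homomorphism at the point of $\operatorname{Spec}B(\la)$ determined by $\ba$, and as $\ba$ varies these run over all closed points of $\operatorname{Spec}B(\la)$. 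Since $B(\la)$ is a reduced finitely generated $\bC$-algebra, $\ker\Psi\subseteq\bigcap_\ba\ker(\chi_\ba\circ\Psi)=0$, so $\Psi$ is an isomorphism, and we are done.

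I expect the only real obstacle to be the Garland-identity step: establishing $P^{(i)}_k v=0$ requires the delicate divided-power manipulation in $\U(\fsl_2\T\bC[t])$. By contrast, the symmetric-function bookkeeping, the properties of the modules $V_\ba$, and the Nullstellensatz density argument are all routine.
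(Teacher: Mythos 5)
The paper does not prove this proposition at all --- it is quoted verbatim from \cite{CFK} --- so there is no internal argument to compare against; your proof is a correct reconstruction of the standard one (Garland's identity $(e_i\otimes t)^{(s)}(f_i\otimes 1)^{(s)}\equiv(-1)^sP^{(i)}_s$ modulo $\mathrm{U}\cdot(\fn_+\otimes t\C[t])$ to get the surjection $B(\la)\twoheadrightarrow A(\la)$, then tensor products of evaluation modules plus the Nullstellensatz to separate points and force injectivity), which is essentially how Chari--Pressley and Chari--Fourier--Khandai argue. The only step you elide is that, with the paper's presentation of $\W(\la)$, the relations $(f_\al\otimes 1)^{\langle\al^\vee,\la\rangle+1}v=0$ are imposed for \emph{all} $\al\in\De_-$, so to realize $V_\ba$ as a quotient of $\W(\la)$ you should run your pigeonhole count for every positive root $\be$ (each $\om_j$-slot absorbs at most $\langle\be^\vee,\om_j\rangle$ copies of $f_{-\be}$), not just the simple ones; this is the same routine computation and does not affect the argument.
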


\begin{thm}\label{loc-glob-classical}
The algebra $A(\la)$ acts freely on the global Weyl module $\W(\la)$. The quotient $\W(\la)/A(\la)$
is isomorphic to the local Weyl module $W(\la)$. In particular,
\[
\ch \W(\la)=\frac{\ch W(\la)}{(q)_\la}.
\]
\end{thm}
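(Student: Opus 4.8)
\emph{Strategy.} I would follow the bimodule approach of \cite{CFK}. The plan is to prove that the canonical map of graded $A(\la)$-modules
\[
\phi\colon A(\la)\otimes_\C W(\la)\longrightarrow\W(\la)
\]
is an isomorphism; granting this, the theorem is immediate. Indeed, imposing $(\fh\otimes t\C[t])v=0$ on $\W(\la)$ kills exactly the submodule $A(\la)_+\W(\la)=\U(\fg\otimes\C[t])(\fh\otimes t\C[t])v$, so $\W(\la)/A(\la)_+\W(\la)\cong W(\la)$ as $\fg\otimes\C[t]$-modules; and if $\phi$ is an isomorphism then $\W(\la)$ is free over $A(\la)$ with $\W(\la)\otimes_{A(\la)}\C\cong W(\la)$, whence $\ch\W(\la)=\ch A(\la)\cdot\ch W(\la)=\ch W(\la)/(q)_\la$ by the preceding proposition. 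To build $\phi$: choose bi-homogeneous weight-vector lifts $\tilde w_s\in\W(\la)$ of a bi-homogeneous $\C$-basis $\{\bar w_s\}$ of $W(\la)=\W(\la)/A(\la)_+\W(\la)$ and set $\phi(a\otimes\bar w_s)=a\tilde w_s$, using the commuting $A(\la)$-action. Since $W(\la)$ is finite-dimensional \cite{CP,CL,FL2}, the graded Nakayama lemma shows the finitely many $\tilde w_s$ generate $\W(\la)$ over $A(\la)$, so $\phi$ is a graded surjection; in particular $\ch\W(\la)\leq\ch W(\la)/(q)_\la$ coefficient-by-coefficient.

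\emph{Reduction to a rank count.} Let $K=\ker\phi$. As a submodule of the free module $A(\la)\otimes_\C W(\la)$ over the domain $A(\la)\cong B(\la)$, $K$ is torsion-free, so it suffices to show $K$ is torsion, i.e. $K\otimes_{A(\la)}F=0$ for $F=\Frac(A(\la))$. Fix an algebraic closure $\bar F$ of $F$. Tensoring the short exact sequence $0\to K\to A(\la)\otimes_\C W(\la)\to\W(\la)\to0$ with $\bar F$ over $A(\la)$ is exact (it is a localization followed by a field extension), and $\bar F$ is faithfully flat over $F$, so
\[
\dim_{\bar F}\!\bigl(\W(\la)\otimes_{A(\la)}\bar F\bigr)=\dim_\C W(\la)-\dim_{\bar F}\!\bigl(K\otimes_{A(\la)}\bar F\bigr),
\]
and $K$ is torsion iff the left-hand side equals $\dim_\C W(\la)$. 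The inequality $\le$ is already known from $\phi$, so everything reduces to proving that the generic fibre of $\W(\la)$ has dimension \emph{at least} $\dim_\C W(\la)$.

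\emph{The generic fibre.} Under the isomorphism $h_i\otimes t^k\mapsto\sum_j(x^i_j)^k$, the morphism $\mathrm{Spec}\,\bar F\to\mathrm{Spec}\,A(\la)=\mathrm{Spec}\,B(\la)$ corresponds to the tuple $(x^i_j)\in\bar F$, whose entries are algebraically independent over $\C$ and in particular pairwise distinct. First I would recall that the tensor product $M=\bigotimes_{i,j}W(\om_i)^{(x^i_j)}$ of fundamental local Weyl modules supported at these distinct points is a cyclic $\fg\otimes\bar F[t]$-module, generated by $w=\bigotimes_{i,j}v_{\om_i}^{(x^i_j)}$ \cite{CP,CFK}. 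Then I would check directly that $w$ is a $\fg$-highest weight vector of weight $\la$ killed by $\fn_+\otimes\bar F[t]$ (so it satisfies the defining relations of $\W(\la)_{\bar F}$) and that $(h_i\otimes t^k)w=\bigl(\sum_j(x^i_j)^k\bigr)w$ — exactly the scalars prescribed by our point of $\mathrm{Spec}\,A(\la)$. By the universal property of the global Weyl module this produces a surjection $\W(\la)\otimes_{A(\la)}\bar F\twoheadrightarrow M$, whence
\[
\dim_{\bar F}\!\bigl(\W(\la)\otimes_{A(\la)}\bar F\bigr)\ge\dim_{\bar F}M=\prod_i(\dim W(\om_i))^{m_i}=\dim_\C W(\la),
\]
the last equality being the multiplicativity of dimensions of local Weyl modules \cite{CP,CL,FL2}. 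Combined with the previous step, $K$ is torsion, hence $K=0$ and $\phi$ is an isomorphism.

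\emph{Main obstacle.} The commutative-algebra scaffolding above is routine; the real content is confined to the last step and, within it, to two imported facts from the classical theory: (a) the dimension formula $\dim W(\la)=\prod_i(\dim W(\om_i))^{m_i}$, and (b) that a tensor product of local Weyl modules at distinct points is cyclic and occurs as a quotient of the localized global Weyl module. For a genuinely self-contained account, (a) is the hard part — its standard proof already runs through a fusion-product lower bound and a PBW-type upper bound. One could also bypass citing (b) explicitly by combining upper semicontinuity of fibre dimension with generic freeness of $\W(\la)$ as a coherent sheaf on the integral scheme $\mathrm{Spec}\,A(\la)$: the generic rank equals the constant fibre dimension on a dense open set, which must meet the dense open locus of distinct-point configurations, where the fibre is the tensor product $M$.
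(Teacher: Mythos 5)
The paper does not prove this statement; it quotes it directly from \cite{CFK} (``The following fundamental results can be found in [CFK]''). Your argument is a correct reconstruction of essentially that proof: surjectivity of $A(\la)\otimes_\C W(\la)\to\W(\la)$ via the bimodule structure and graded Nakayama, then injectivity by comparing the generic fibre of $\W(\la)$ with a cyclic tensor product of fundamental local Weyl modules at distinct points. You also correctly isolate the genuinely hard imported input, namely $\dim W(\la)=\prod_i(\dim W(\om_i))^{m_i}$, which is a deep theorem of \cite{CP,CL,FL2} (and Naoi in the remaining types) and is exactly the external fact on which the cited proof in \cite{CFK} also rests.
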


\subsection{Generalized local Weyl modules}
In this subsection we recall the main results of \cite{FM3}.
Let $\widehat\fg$ be the untwisted affine Kac-Moody Lie algebra with the set of real roots given by
$\De(\tX)=\{\al+k\delta : \al\in\Delta, k\in\bZ\}$, where $\delta=\delta^X$ is the basic imaginary root.
The level-zero weight lattice of $\widehat\fg$ is $\tX=X\oplus\Z\de$. Here we follow the notation of \S\ref{SS:af}, except that we now frequently choose to omit the superscript $X$. For instance, we write $\al_i^\vee=\al_i^Y=\al_i^{\vee X}$ for $i=0,1,\dotsc,n$ (we take this as the definition of $\al_0^{\vee X}$ when $i=0$). We write simply $\QBG$ for $\QBG(\tX)$ from \S\ref{SS:QBG}.
Since we are the in the untwisted setting, $\tY$ is the level-zero weight lattice of the untwisted affinization of $\Delta(X)^\vee$. We let $\tW=W(\tY)=X\rtimes W$ be the extended affine Weyl group of $\tY$.

Recall that for an element $\mu\in X$ we denote by $t_\mu\in \tilde W$ the corresponding translation element in the extended affine Weyl group.
For $k=1,\dots,n$, let us fix a reduced expression $t_{-\om_k}=\pi s_{i_1}\dots s_{i_r}$
and consider the set of affine coroots $\beta^k_m\in\De(\tY)$, $m=1,\dots,r$, from \eqref{E:beta}:
\[
\beta^k_m=s_{i_r}\dots s_{i_{m+1}}\al^\vee_{i_m}.
\]
In particular, the real parts ${\rm Re}\,\be_m^k$, are negative coroots for $\fg$ (and ${\rm Re}\,\be_1^k=-\al^\vee_k$ is the
negative simple coroot).

Let us consider the nilpotent subalgebra $\fn^{af}=\fn_+\T 1\oplus\fg\T t\bC[t]\subset \widehat\fg$.
For $\sigma\in W$ and $\la\in X_-$ the generalized local Weyl module $W_{\sigma(\la)}$ is the cyclic $\fn^{af}$-module defined by relations
$\fh\T t\bC[t] v=0$ and
\begin{align*}
(f_{\alpha}\otimes t) v=0,&\quad \forall\,\alpha \in \sigma(\Delta_-)\cap \Delta_-;\\
(e_\alpha\otimes 1) v=0,&\quad \forall\,\alpha \in \sigma(\Delta_-)\cap\Delta_+;\\
(f_{\sigma(\alpha)}\otimes t)^{-\langle \alpha^\vee, \lambda \rangle+1} v=0,&\quad \forall\,\alpha \in \Delta_+\cap\si^{-1}(\Delta_-);\\
(e_{\sigma(\alpha)}\otimes 1)^{-\langle \alpha^\vee, \lambda \rangle+1} v=0,&\quad \forall\,\alpha \in \Delta_+\cap\si^{-1}(\Delta_+).
\end{align*}

\begin{rem}\label{R:hwt}
Let us extend the current algebra $\fg\T\bC[t]$ with the element $d\in \widehat\fg$ satisfying $[d,x\T t^k]=kx\T t^k$.
Then the action of $\fn^{af}$ on the modules $W_{\si(\la)}$ can be extended to $\fn^{af}\oplus\fh\otimes 1\oplus \bC d$ by imposing the
additional relations $(h\otimes 1)v = \mu(h)(v)$, $dv=lv$ for any $\mu\in X$, $l\in\bZ$. Denote the resulting module by
$W_{\si(\la)}^{\mu,l}$. We note that the pair $(\mu,l)$ is nothing but the affine weight $\mu+l\delta$.
We stress that $(\mu,l)$ can be arbitrary here, in contrast to the finite-dimensional setting. In particular, we have
$W_{\si(\la)}^{\mu,l}\simeq W_{\si(\la)}$ as $\fn^{af}$-modules for any $\mu\in X$, $l\in\bC$. However, unless otherwise specified,
we will denote by $W_{\si(\la)}$ either the $\fn^{af}$-module $W_{\si(\la)}$ or the $\fn^{af}\oplus\fh\otimes 1\oplus \bC d$-module
$W_{\si(\la)}^{\si(\la),0}$, so that the cyclic vector $v$ has weight $(\si(\la),0)$.
\end{rem}

For an element $\sigma \in W$ and $\alpha \in \Delta_+$ we set
\begin{align*}
\widehat \sigma(f_{-\alpha}\T t)&=
                            \begin{cases}
                             f_{-\sigma(\alpha)}\T t,&\ \text{if $\sigma(\alpha) \in \Delta_+$} \\
                             e_{-\sigma(\alpha)}\otimes 1,&\ \text{if $\sigma(\alpha) \in \Delta_-$} \\
                            \end{cases}\\
\widehat \sigma(e_{\alpha}\T 1)&=
                            \begin{cases}
                             e_{\sigma(\alpha)}\T 1,&\ \text{if $\sigma(\alpha) \in \Delta_+$} \\
                             f_{\sigma(\alpha)}\otimes t,&\ \text{if $\sigma(\alpha) \in \Delta_-$}. \\
                            \end{cases}
\end{align*}														
This action is compatible with the following action on a subset of affine roots:
\begin{align*}
\widehat \sigma(-\alpha+\delta)&=   \begin{cases}
                             {-\sigma(\alpha)+\delta},&\ \text{if $\sigma(\alpha) \in \Delta_+$} \\
                             {-\sigma(\alpha)},&\ \text{if $\sigma(\alpha) \in \Delta_-$} \\
                            \end{cases}\\
\widehat \sigma(\alpha)&=   \begin{cases}
                             {\sigma(\alpha)},&\ \text{if $\sigma(\alpha) \in \Delta_+$} \\
                             {\sigma(\alpha)}+ \delta,&\ \text{if $\sigma(\alpha) \in \Delta_-$}. \\
                            \end{cases}										
\end{align*}
Using this notation, the generalized local Weyl modules are defined by three sets of relations:
$(\fh\T t\bC[t]) v=0$, $\widehat\sigma(f_{-\alpha}\otimes t) v=0$, and
$\widehat\sigma(e_{\alpha}\otimes 1)^{-\langle \alpha^\vee, \lambda \rangle+1} v=0$ ($\alpha \in \Delta_+$).

In order to study the modules $W_{\si(\la)}$, we introduce the generalized local Weyl modules with characteristics.
Suppose that $\la\in X_-$ and $\langle\al_i^\vee,\la\rangle<0$ for some $i=1,\dots,n$.
We fix a reduced decomposition of the element $t_{-\om_i}$  and the corresponding sequence of
affine coroots $\beta^i_\bullet=\{\beta^i_1,\dots,\beta^i_r\}$.
The generalized local Weyl module with characteristics $W_{\sigma(\la)}(\beta^i_\bullet,m)$ is the
cyclic $\fn^{af}$-module defined by the relations
$(\fh\T t\bC[t])v=0$ and
\begin{align*}
\widehat{\sigma}(f_{-\alpha}\T t)v=0,\quad \widehat{\sigma}(e_{\alpha})^{l_{\al,m}+1}v=0 \quad \text{for all $\al\in\Delta_+$},
\end{align*}
where $l_{\al,m}= -{\langle \alpha^\vee, \lambda \rangle-|\lbrace \beta^i_l : {\rm Re}\, \beta^i_l=-\alpha^\vee, l \leq m \rbrace|}.$
For example, $W_{\sigma(\la)}(\beta^i_\bullet,0)\simeq W_{\sigma(\la)}$ and $W_{\sigma(\la)}(\beta^i_\bullet,r)\simeq W_{\sigma(\la+\om_i)}$.
In what follows we omit $\beta^i_\bullet$ and write $W_{\sigma(\la)}(m)$ instead of $W_{\sigma(\la)}(\beta^i_\bullet,m)$
(we always assume that a reduced decomposition of $t_{-\om_i}$ is fixed).
Let us consider the chain of natural surjections
\begin{equation}\label{chain}
W_{\sigma(\la)}= W_{\sigma(\la)}(0)\to W_{\sigma(\la)}(1)\to W_{\sigma(\la)}(2)\to\dots\to W_{\sigma(\la)}(r)=W_{\sigma(\la+\om_i)}.
\end{equation}
\begin{thm}[{\cite[Theorem 2.18]{FM3}}]  \label{localmain}
The surjection $W_{\sigma(\la)}(m)\to W_{\sigma(\la)}(m+1)$ is an isomorphism if and only if there is no edge
$\sigma \to \sigma s_{{\rm Re}\be^i_{m+1}}$ in $\QBG$. Assume that such an edge does exist and let
${\rm Re}\,\be^i_{m+1}=-\al_i^\vee$. Then the kernel of the surjection
is isomorphic to $W^{\si(\la)+(l_{\al,m}+1)\widehat{\sigma}(\al)}_{\sigma s_{{\rm Re}\be^i_{m+1}}(\la)}(m+1)$.
\end{thm}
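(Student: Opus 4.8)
The plan is to compare the two presentations term by term. Writing ${\rm Re}\,\be^i_{m+1}=-\al^\vee$ for $\al\in\Delta_+$, the modules $W_{\sigma(\la)}(m)$ and $W_{\sigma(\la)}(m+1)$ have identical defining relations except that the single exponent $l_{\al,m}+1$ attached to $\widehat\sigma(e_\al)$ is lowered by one (to $l_{\al,m+1}+1=l_{\al,m}$). Hence $W_{\sigma(\la)}(m)\to W_{\sigma(\la)}(m+1)$ is the quotient by the $\fn^{af}$-submodule $\langle w\rangle$ generated by the single vector $w:=\widehat\sigma(e_\al)^{l_{\al,m}}v$, and the whole statement is an identification of $\langle w\rangle$. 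First I would record the local picture around $v$: because $(\fh\T t\C[t])v=0$, the operators $E:=\widehat\sigma(e_\al)$, $F:=\widehat\sigma(f_{-\al}\T t)$ and $[E,F]\in\fh\T t\C[t]$ generate on $v$ a cyclic module over a copy of $\fn^{af}(\mathfrak{sl}_2)$ subject to $Fv=0$ and $E^{l_{\al,m}+1}v=0$; thus $w$ is the extremal vector of the $\widehat\sigma(\al)$-string through $v$, and its affine weight is readily computed (it is the superscript appearing in the statement).

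Next, in the case where an edge $\sigma\to\sigma s_\al$ exists in $\QBG$, I would build a surjection onto $\langle w\rangle$ from the claimed module. The reflection $s_\al$ lifts to the $\mathfrak{sl}_2$-picture above, trading $E$ for a scalar multiple of $F$ and, more globally, converting the $\widehat\sigma$-twisting into the $\widehat{\sigma s_\al}$-twisting (one reads $\widehat{\sigma s_\al}(e_\gamma)$ off from $\widehat\sigma(e_{s_\al(\gamma)})$, and similarly for the $f$-operators). Then, using $\mathfrak{sl}_2$-representation theory together with Garland-type commutator identities among the $e_\gamma\T 1$ and $f_{-\gamma}\T t$, I would verify that $w$ is annihilated by $\fh\T t\C[t]$, by every $\widehat{\sigma s_\al}(f_{-\gamma}\T t)$, and by $\widehat{\sigma s_\al}(e_\gamma)^{l_{\gamma,m+1}+1}$ for all $\gamma\in\Delta_+$ --- exactly the defining relations of $W_{\sigma s_\al(\la)}(\be^i_\bullet,m+1)$ based at a cyclic vector of weight $\wt(w)$. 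This produces the surjection $W^{\wt(w)}_{\sigma s_\al(\la)}(m+1)\twoheadrightarrow\langle w\rangle$. In the complementary case --- no such edge --- I would instead show $\langle w\rangle=0$: by \eqref{E:QBG-simple} the root $\al$ is then non-simple, and writing $s_\al$ as a product of simple reflections one should be able to deduce $\widehat\sigma(e_\al)^{l_{\al,m}}v=0$ from the exponent bounds already imposed for roots of smaller height, via Serre/Garland-type relations (the familiar phenomenon that in a Weyl module the bound for a non-simple root is forced by those for simple ones), so that the surjection $W_{\sigma(\la)}(m)\to W_{\sigma(\la)}(m+1)$ is an isomorphism.

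Finally I would upgrade the surjection of the edge case to an isomorphism by a character estimate run along the entire chain \eqref{chain}. Summing the short exact sequences $0\to\langle w_m\rangle\to W_{\sigma(\la)}(m)\to W_{\sigma(\la)}(m+1)\to 0$ and using $\langle w_m\rangle=0$ at the non-edge steps gives $\ch W_{\sigma(\la)}=\ch W_{\sigma(\la+\om_i)}+\sum_{m:\,\text{edge}}\ch\langle w_m\rangle$, with $\ch\langle w_m\rangle\le\ch W^{\wt(w_m)}_{\sigma s_{{\rm Re}\be^i_{m+1}}(\la)}(m+1)$ from the surjections. One then needs an independent sharp formula (or lower bound) for $\ch W_{\sigma(\la)}$ --- obtained via a PBW spanning argument and anchored to the classical local Weyl module and its known character --- that matches $\ch W_{\sigma(\la+\om_i)}+\sum_{m:\,\text{edge}}\ch W^{\wt(w_m)}_{\sigma s_{{\rm Re}\be^i_{m+1}}(\la)}(m+1)$ exactly; then each of the inequalities above is forced to be an equality, so each $W^{\wt(w_m)}_{\sigma s_{{\rm Re}\be^i_{m+1}}(\la)}(m+1)\twoheadrightarrow\langle w_m\rangle$ is an isomorphism. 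I expect this matching to be the main obstacle: it simultaneously requires that the relation-checking of the previous paragraph be tight (so the candidate modules are no larger than claimed) and a sufficiently sharp a priori handle on $\ch W_{\sigma(\la)}$, and it is precisely here that the quantum Bruhat graph enters essentially --- governing which steps contribute and, after iterating the whole construction, matching the number of top-level subquotients with $\dim W(\om_i)$.
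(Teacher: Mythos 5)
The first thing to say is that this paper contains no proof of the statement: Theorem~\ref{localmain} is imported verbatim from \cite[Theorem~2.18]{FM3}, so there is no internal argument to compare yours against. That said, your outline is a faithful reconstruction of the strategy actually used in \cite{FM3}: identify the kernel as the cyclic submodule generated by $\widehat\sigma(e_\al)^{l_{\al,m}}v$, check the defining relations of the candidate module on that vector to obtain a surjection onto the kernel (and, in the no-edge case, show the generator vanishes outright), and then force every surjection along the chain \eqref{chain} to be an isomorphism by a character count anchored to the classical local Weyl module at $\sigma=\id$. The ``independent sharp formula'' you correctly identify as the crux is, in \cite{FM3}, exactly the Orr--Shimozono specialization \eqref{E:RY-limit-0} of the Ram--Yip formula, which is why that theorem is proved there simultaneously with $\ch W_{\si(\la)}=E_\la^\si(X;q,0)$ (equation \eqref{E:mac-weyl} here); the count of contributing steps against $\QB(\si,m_\la)$ is what makes the two sides match. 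Two cautions. First, the steps you defer are genuinely the bulk of \cite{FM3}: verifying annihilation by the $\widehat{\sigma s_\al}(f_{-\gamma}\T t)$ and the vanishing of $\widehat\sigma(e_\al)^{l_{\al,m}}v$ in the no-edge case is not a matter of generic Garland/Serre identities alone but uses the specific structure of the sequence $\be^i_\bullet$ attached to a reduced word for $t_{-\om_i}$, so your sketch at that point is a plan rather than a proof. Second, a small indexing point: your kernel generator $\widehat\sigma(e_\al)^{l_{\al,m}}v$ agrees with \eqref{kernel} but is off by one from the exponent $l_{\al,m}+1$ appearing in the superscript of the theorem as quoted (compare Remark~\ref{shift}); the conventions of \cite{FM3} should be consulted before relying on the precise affine weight shift.
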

\begin{rem}\label{shift}
The weight $(l_{\al,m}+1)\widehat{\sigma}(\al)$ may have a nontrivial $\delta$ component---see Remark \ref{R:hwt}.
More precisely, the cyclic vector of the kernel $W_{\sigma(\la)}(m)\to W_{\sigma(\la)}(m~+~1)$ is equal to
$(\widehat{\sigma} e_\al)^{l_{\al,m}+1}v$, with $v$ being the cyclic vector of $W_{\sigma(\la)}(m)$, and hence the
shift of the weight (the upper index).
Theorem \ref{localmain} has an obvious generalization. Namely, if one starts with a surjection
$W^{\nu}_{\sigma(\la)}(m)\to W^\nu_{\sigma(\la)}(m+1)$ for an affine weight $\nu$, then the kernel (if nontrivial) is isomorphic to
$W^{\nu+\sigma(\la)+(l_{\al,m}+1)\widehat{\sigma}(\al)}_{\sigma s_{{\rm Re}\be^i_{m+1}}(\la)}(m+1)$.
\end{rem}

Theorem~\ref{localmain} gives a filtration on the generalized local Weyl modules with subquotients isomorphic to generalized local Weyl modules
$W^\nu_{\tau(\la)}$, where $\nu$ are some affine weights and $\tau\in W$ are all possible ends of the paths in $\QBG$ starting at
$\sigma$ and following the edges with labels prescribed by the roots ${\rm Re}\,\beta^i_m$. More precisely,
all the paths are of the form
\[
\si\to \si s_{{\rm Re}\be^i_{m_1+1}}\to \si s_{{\rm Re}\be^i_{m_1+1}}s_{{\rm Re}\be^i_{m_2+1}}\to\dots
\]
for a sequence $0\le m_1<m_2<\dots <r$.

The filtration is constructed as follows: first, let us consider the kernel $K_{0,r}$ of the map
$W_{\sigma(\la)}\to W_{\sigma(\la)}(r)=W_{\sigma(\la+\om_i)}$. The space $K_{0,r}$ contains the kernel
$K_{0,r-1}$ of the map $W_{\sigma(\la)}\to W_{\sigma(\la)}(r-1)$. Thanks to
Theorem \ref{localmain} we have $K_{0,r}/K_{0,r-1}\simeq W_{\sigma s_\gamma(\lambda)}(r)$ for some $\gamma$.
Moving backwards along the chain \eqref{chain} we obtain a filtration on $W_{\sigma(\la)}$ with subquotients isomorphic
to $W_{\tau(\la)}(m)$, $\tau\in W$, $m\ge 1$. We then proceed further with each $W_{\tau(\la)}(m)$ using \eqref{chain}.
Since $W_{\tau(\la)}(r)\simeq W_{\tau(\la+\om_i)}$ for all $\tau$, Theorem \ref{localmain} indeed produces
the desired filtration.

\subsection{Generalized global Weyl modules}
\label{SS:glob}
We define generalized global Weyl modules $\W_\mu$, depending on an arbitrary $\mu\in X$.
The generalized global Weyl modules are cyclic representations of the algebra
$\fn^{af}=\fg\T t\bC[t]\oplus\fn_+\T 1$ defined by the following of relations.
Namely, let $\mu=\sigma(\la)$ for an antidominant $\la\in X_-$ and $\sigma\in W$. Then the relations in  $\W_{\sigma(\la)}$ are
as follows ($v$ is the cyclic vector):
\begin{align*}
(f_{\alpha}\otimes t) v=0,&\quad \forall\,\alpha \in \sigma(\Delta_-)\cap \Delta_-;\\
(e_\alpha\otimes 1) v=0,&\quad \forall\,\alpha \in \sigma(\Delta_-)\cap\Delta_+;\\
(f_{\sigma(\alpha)}\otimes t)^{-\langle \alpha^\vee, \lambda \rangle+1} v=0,&\quad \forall\,\alpha \in \Delta_+\cap\si^{-1}(\Delta_-);\\
(e_{\sigma(\alpha)}\otimes 1)^{-\langle \alpha^\vee, \lambda \rangle+1} v=0,&\quad \forall\,\alpha \in \Delta_+\cap\si^{-1}(\Delta_+).
\end{align*}
We can write the relations in a more compact form using the $\widehat\sigma$ notation:
\[
\widehat\si (f_{-\al}\T t)v=0, \ \widehat\si (e_{\al}\T 1)^{-\langle \al^\vee,\la\rangle +1}v=0,\quad \text{for all $\al\in\Delta_+$}.
\]

\begin{rem}\label{R:hwt-glob}
As in Remark~\ref{R:hwt}, we can extend the action of $\fn^{af}$ on $\W_{\si(\la)}$ to $\fn^{af}\oplus\fh\otimes 1\oplus\bC d$
by declaring that $v$ has $\fh$-weight $\mu$ and $d$-weight $l$ for any $\mu\in X$, $l\in \bZ$. It is convenient to identify
the pair $(\mu,l)$ with the affine weight $\nu=\mu+l\delta$.
We denote the resulting module by $\W_{\si(\la)}^\nu$, and continue to use $\W_{\si(\la)}$ to denote the module
$\W_{\si(\la)}^{\si(\la)+0\delta}$.
Then $\W(w_0\la)\simeq\W_\la$ as $\fn^{af}\oplus\fh\otimes 1$-modules for any $\la\in X_-$.
\end{rem}

The modules $\W_{\si(\la)}^\nu$ are naturally graded by $\fh$ and by $d$. We say that a vector $v\in \W_{\si(\la)}^\nu$ has
degree $j$ if $dv=jv$.
For a $d$-graded $\mathfrak{h}$-module $V=\oplus_{j\in\bZ} V_j$ with $\dim V_j <\infty$ for all $j$, we define the
graded character as an element of $\Z_{\ge 0}[q][X]$:
\begin{align}
\ch V = \sum_{j\in\bZ} q^j\sum_{\la\in P}(\dim\,(V_j)_\la) X^\la.
\end{align}

Let $A_{\sigma(\la)}={\rm U}(\fh\T t\bC[t])v\subset \W_{\sigma(\la)}$ be the space of the weight $\sigma(\la)$ vectors and let
$\la=-\sum_{i=1}^n m_i\om_i$, $m_i\ge 0$. The space $A_{\sigma(\la)}$ is naturally a quotient
of the polynomial ring ${\rm U}(\fh\T t\bC[t])$ and hence carries the natural grading with respect to the degree of $t$.

\begin{lem}\label{bi-module}
The algebra $A_{\sigma(\la)}$ acts on $\W_{\sigma(\la)}$ as follows: for $a\in A_{\sigma(\la)}$ and $r\in {\rm U}(\fn^{af})$
one has $(rv). a=r(av)$.
\end{lem}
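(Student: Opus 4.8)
The plan is to exhibit $\W_{\sigma(\la)}$ as a genuine bimodule, with $\fn^{af}$ acting on the left in the usual way and $A_{\sigma(\la)}$ acting on the right by the prescription $(rv).a = r(av)$; the only thing to verify is that this right action is well-defined, i.e.\ independent of the choice of $r\in\U(\fn^{af})$ representing a given element $rv\in\W_{\sigma(\la)}$. First I would set up notation: by the PBW theorem $\U(\widehat\fg)=\U(\fn^{af})\otimes\U(\fh\otimes t\bC[t])\otimes\U(\fn_-^{af})$ for an appropriate complementary nilpotent $\fn_-^{af}$, and $\W_{\sigma(\la)}=\U(\fn^{af})v$ while $A_{\sigma(\la)}=\U(\fh\otimes t\bC[t])v$. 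Note first that $\fh\otimes t\bC[t]$ normalizes $\fn^{af}$ inside $\widehat\fg$ (since $[\fh\otimes t\bC[t],\fg\otimes t\bC[t]]\subset\fg\otimes t\bC[t]$ and $[\fh\otimes t\bC[t],\fn_+\otimes 1]\subset\fn_+\otimes t\bC[t]\subset\fg\otimes t\bC[t]$), so for $a\in\U(\fh\otimes t\bC[t])$ and $r\in\U(\fn^{af})$ we have $ra = r'a$ for some $r'\in\U(\fn^{af})$ obtained by moving $a$ past $r$; hence $r(av)=(ra)v=(r'a)v$ lies in $\W_{\sigma(\la)}$, and the formula at least produces an element of the module.

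Well-definedness is the crux. Suppose $r_1 v = r_2 v$ with $r_1,r_2\in\U(\fn^{af})$; I must show $r_1(av)=r_2(av)$ for all $a\in A_{\sigma(\la)}$, equivalently $(r_1-r_2)(av)=0$. Write $r=r_1-r_2$, so $rv=0$, i.e.\ $r$ lies in the left ideal $\mathcal{I}\subset\U(\fn^{af})$ of relations defining the cyclic module $\W_{\sigma(\la)}$. This ideal is generated by the elements $\widehat\sigma(f_{-\al}\otimes t)$ and $\widehat\sigma(e_\al\otimes 1)^{-\langle\al^\vee,\la\rangle+1}$ for $\al\in\Delta_+$. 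The key point is that each of these generators, when pushed to the right past an element $a\in\U(\fh\otimes t\bC[t])$, stays in $\mathcal{I}$: indeed, for $x\in\fn^{af}$ a root vector of weight $\gamma$ and $a\in\U(\fh\otimes t\bC[t])$ a monomial of $\fh$-weight $0$, we have $xa = ax + [x,a]$, and $[x,a]$ is again of the form $\sum a_k x_k$ with $a_k\in\U(\fh\otimes t\bC[t])$, $x_k\in\U(\fn^{af})$ (using that $\fh\otimes t\bC[t]$ normalizes $\fn^{af}$), so commuting $a$ to the left of a power $x^{N}$ produces $x^{N} a = \sum (\text{stuff})\cdot x^{N_k}$ with each $x^{N_k}$ still a power of $x$ with exponent $\ge 1$ when the original exponent was $\ge 1$; more carefully one shows by induction on the PBW degree of $a$ that $x^{N} a \in \sum_k b_k\, x^{N}\, c_k$ type expressions plus lower terms, all still in $\mathcal{I}$. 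Concretely: $\mathcal I a \subseteq \mathcal I$ because moving $a$ to the left across any $r'\in\mathcal I$ expresses $r' a$ as a left-$\U(\fn^{af})$-combination of the same generators (the generators of $\mathcal I$ are root-vector expressions, and conjugating/commuting by $\fh\otimes t\bC[t]$ preserves the set of root vectors up to scalars and adds terms in $\fg\otimes t\bC[t]$, which only helps since $\widehat\sigma(f_{-\al}\otimes t)$-type relations absorb extra factors from $\fg\otimes t\bC[t]$). Granting $\mathcal I a\subseteq\mathcal I$, we get $r(av) = r a v = 0$ since $ra\in\mathcal I$, completing the argument.

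The main obstacle I anticipate is the bookkeeping in showing $\mathcal I a \subseteq \mathcal I$: one has to be a little careful that commuting a monomial $a\in\U(\fh\otimes t\bC[t])$ to the left across a \emph{power} $\widehat\sigma(e_\al\otimes 1)^{N}$ does not lower the exponent below $N$. The clean way is to observe that $\widehat\sigma(e_\al\otimes 1)$ is an $\fh$-eigenvector under $\ad$, so $[h\otimes t^k, \widehat\sigma(e_\al\otimes 1)]$ is a scalar multiple of something in $\fg\otimes t\bC[t]$ — either again proportional to $\widehat\sigma(e_\al\otimes 1)$ times $t^k$, or (in the $\widehat\sigma$-twisted case where $e_\al\mapsto f_{\sigma(\al)}\otimes t$) still a root vector in $\fg\otimes t\bC[t]$ — and in all cases, after commuting $a$ all the way through, one lands in a sum of monomials each of which is divisible on the left by the same defining relation (possibly with a higher power or with an extra $\fg\otimes t\bC[t]$ factor attached, which keeps it in $\mathcal I$). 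Alternatively, and perhaps more transparently, one can avoid this entirely by using the bimodule construction for the classical global Weyl module from \cite{CP,CFK} as a template: the identity $(rv).a := r(av)$ is forced once one checks it is independent of the representative, and that check reduces exactly to $\mathcal I\cdot\U(\fh\otimes t\bC[t])v = 0$ in $\W_{\sigma(\la)}$, which holds because $\U(\fh\otimes t\bC[t])v = A_{\sigma(\la)}v$ sits inside the cyclic module and $\mathcal I$ annihilates the whole module. Once well-definedness is established, associativity $((rv).a).b = (rv).(ab)$ and the fact that it commutes with the left $\fn^{af}$-action are immediate from the definition.
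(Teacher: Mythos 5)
There is a genuine gap. Your reduction of the lemma to the statement that the defining left ideal $\mathcal I\subset \U(\fn^{af})$ satisfies $\mathcal I\cdot \U(\fh\T t\bC[t])\subseteq\mathcal I$ is the right move (it is equivalent to showing that $av$ satisfies the defining relations, which is the CFK template the paper invokes), but your verification of that containment does not work. Commuting $a=h\T t^m$ to the left across a generator such as $f_{-\beta}\T t$ (with $\beta=\si(\al)\in\Delta_+$) produces $(f_{-\beta}\T t)(h\T t^m)=(h\T t^m)(f_{-\beta}\T t)+\beta(h)\,f_{-\beta}\T t^{m+1}$, and $f_{-\beta}\T t^{m+1}$ is a \emph{new single Lie algebra element}, not a left multiple of any listed generator; the assertion that the $\widehat\si(f_{-\al}\T t)$-type relations ``absorb extra factors from $\fg\T t\bC[t]$'' confuses left multiples (which a left ideal does absorb) with these genuinely new terms. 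Whether $f_{-\beta}\T t^{j}$ ($j\ge 2$), and the analogous cross terms arising inside the powers $\widehat\si(e_\al)^{N}$, lie in $\mathcal I$ is precisely the content of the lemma, and establishing it requires the rank-one current-algebra input (Garland-type identities for $\widehat{\msl_2}$, or equivalently the integrability argument of \cite{CFK}) that your write-up never supplies. A purely formal commutation argument is circular: already for $\fg=\msl_2$, $\la=-\om$, one finds $f\T t^2\equiv\tfrac12(f\T t)(h\T t)$ modulo terms that visibly kill $v$, so proving $(f\T t^2)v=0$ is the same problem as proving well-definedness, not a step toward it.

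Your fallback argument is worse: the claim that ``$\mathcal I$ annihilates the whole module'' is false. A left ideal of relations annihilates the cyclic vector, not the cyclic module. Concretely, for $\fg=\msl_2$ and $\la=-\om$ one has $f\T t\in\mathcal I$ but $(f\T t)(ev)=-(h\T t)v\neq 0$ in $\W_{-\om}$, since $(h\T t)v$ is a nonzero element of $A_{-\om}$. The correct route (and what the paper means by citing \cite[\S3.4]{CFK}) is: show that $av$ satisfies each pair of defining relations $\widehat\si(f_{-\al}\T t)(av)=0$ and $\widehat\si(e_\al)^{N_\al}(av)=0$, which reduces root by root to a computation in the $\msl_2$ current algebra attached to $\si(\al)$; then the assignment $v\mapsto av$ extends to an $\fn^{af}$-endomorphism of $\W_{\si(\la)}$, and $rv=0$ forces $r(av)=0$. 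Your proposal has the right skeleton but omits exactly the rank-one verification on which everything rests.
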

\begin{proof}
The proof given in \cite[\S3.4]{CFK} carries over without difficulty. (By Remark~\ref{R:hwt-glob}, the argument given in {\em op. cit.} corresponds to the case $\si=\id$.)
\end{proof}

\begin{prop}\label{Asi}
We have $A_{\sigma(\la)}\simeq A(w_0\la)$ as graded algebras for any $\sigma\in W$, $\la\in X_-$.
\end{prop}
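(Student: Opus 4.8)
The plan is to compute the homogeneous ideal $\mathrm{Ann}_S(v)\subset S:=\U(\fh\T t\bC[t])=\mathrm{Sym}(\fh\T t\bC[t])$, graded by the degree of $t$, so that $A_{\si(\la)}=S/\mathrm{Ann}_S(v)$, and to show that it is the $\si$-twist of the corresponding ideal for $\si=\id$. Write $\la=-\sum_i m_i\om_i$ with $m_i=-\pair{\al_i^\vee}{\la}\ge 0$, and recall that $W$ acts on $S$ by graded algebra automorphisms via $h\T t^k\mapsto\si(h)\T t^k$. Since $\W_{\id(\la)}\simeq\W(w_0\la)$ as $\fn^{af}\oplus\fh\T 1$-modules (Remark~\ref{R:hwt-glob}) and the highest weight algebra is defined intrinsically from the cyclic vector, $A_{\id(\la)}\simeq A(w_0\la)$; hence it suffices to produce a graded isomorphism $A_{\si(\la)}\simeq A_{\id(\la)}$ through such an automorphism of $S$.

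First I would construct a surjection $A(w_0\la)\twoheadrightarrow A_{\si(\la)}$. Fix $i$. The elements $\widehat\si(e_{\al_i}\T 1)$ and $\widehat\si(f_{-\al_i}\T t)$ generate a copy of $\fn^{af}$ for $\fsl_2$ inside $\fn^{af}$, whose Cartan current part is the line $\bC\si(\al_i^\vee)\T t\bC[t]$; moreover the two defining relations of $\W_{\si(\la)}$ indexed by $\al_i$ are precisely the defining relations of the corresponding $\fsl_2$ generalized global Weyl module. By the Garland-type identities for $\fsl_2$ (compare the $\fsl_2$ computations of \S\ref{S:A1}) these relations force the coefficients of $z^s$, $s>m_i$, of $\exp\!\big(\mp\sum_{r\ge 1}\tfrac1r\,(\si(\al_i^\vee)\T t^r)\,z^r\big)$, with sign according to whether $\si(\al_i)\in\De_\pm$, to annihilate $v$. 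Running over all $i$, a direct check identifies the ideal these elements generate with the image, under a graded automorphism $h\T t^k\mapsto\tau(h)\T t^k$ ($\tau\in W$) of $S$, of the ideal $J\subset S$ with $S/J\simeq A(w_0\la)$ (see \cite{CFK} and Theorem~\ref{loc-glob-classical}). Thus $A_{\si(\la)}$ is a graded quotient of a copy of $A(w_0\la)$.

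The remaining, and main, obstacle is to show this surjection is an isomorphism, equivalently $\ch A_{\si(\la)}=\ch A(w_0\la)$. Here I would argue, in the spirit of \cite{CP,CFK}, that for a Zariski-dense family of algebra homomorphisms $\chi\colon S\to\bC$ — namely those recording, for each $i$, an $m_i$-element multiset of pairwise distinct complex numbers — the one-dimensional quotient $\W_{\si(\la)}\T_S\bC_\chi$ is nonzero. One produces it as a quotient of a tensor product of $\si$-twisted evaluation representations of $\fg$ at the chosen points, taking as cyclic vector the tensor of the extremal weight vectors annihilated by $\si(\fn_-)$, and checking root by root, using distinctness of the points exactly as in \cite{CP}, that this vector is nonzero, generates, and kills all the relations of $\W_{\si(\la)}\T_S\bC_\chi$. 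Because $A(w_0\la)\simeq B(w_0\la)$ is a reduced finitely generated algebra, a surjection from it onto $A_{\si(\la)}$ whose target separates this dense set of points must be an isomorphism; alternatively one can combine the equality $\dim W_{\si(\la)}=\dim W(w_0\la)$ from \cite{FM3} with an upper-semicontinuity/flatness argument over $\mathrm{Spec}A_{\si(\la)}$. Either way, $A_{\si(\la)}\simeq A(w_0\la)$ as graded algebras, which proves the Proposition.
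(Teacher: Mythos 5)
Your proposal is correct, and its first half (the surjection $A(w_0\la)\simeq B(w_0\la)\twoheadrightarrow A_{\si(\la)}$ via the $\fsl_2$-triples attached to $\si(\al_1),\dots,\si(\al_n)$ and the Garland/power-sum relations from \cite{CFK}) is exactly the paper's first step. Where you genuinely diverge is in proving that this surjection is injective. The paper gets the needed lower bound $\ch A_{\si(\la)}\ge (q)_{w_0\la}^{-1}$ by a degeneration argument entirely inside the classical global Weyl module: it takes the degree-zero extremal vector $v_\si$ of weight $\si(\la)$ in $\W(w_0\la)$, filters $\W(w_0\la)$ by the $(t-1)$-degree, checks that the defining relations of $\W_{\si(\la)}$ already hold on ${\rm U}(\fn^{af})v_\si$, and thereby exhibits ${\rm U}(\fh\T t\bC[t])v_\si$ --- whose character is $(q)_{w_0\la}^{-1}$ because $\W(w_0\la)$ is a $\fg$-module --- as a quotient of $A_{\si(\la)}$. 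You instead run the Chari--Pressley-style argument: nonvanishing of the fibers $\W_{\si(\la)}\T_S\bC_\chi$ at a Zariski-dense set of characters (realized via tensor products of $\si$-twisted evaluation modules at distinct nonzero points), combined with reducedness of $B(w_0\la)$, forces the kernel of the surjection to vanish. Both routes work; the paper's buys a one-line character count with no need to verify cyclicity and the full list of relations in a tensor product of evaluation modules, while yours is more self-contained relative to the classical theory and makes the generic fibers explicit. Two small cautions on your version: you must track that the character by which $S$ acts on the cyclic vector of the evaluation tensor product matches, under your step-1 identification through the $h^\si_i$, the chosen point of $\mathrm{Spec}\,B(w_0\la)$ (this works out, since $h^\si_i\T t^k$ acts by $\pm$ the $k$-th power sum of the points attached to the $i$-th node); and your parenthetical alternative via upper semicontinuity over $\mathrm{Spec}\,A_{\si(\la)}$ gives only an upper bound on generic fiber dimension, so as stated it does not replace the nonvanishing argument --- stick with the evaluation-module route.
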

\begin{proof}
We first show that $A_{\sigma(\la)}$ is a quotient of $A(w_0\la)$. Let us consider the roots
(positive or negative) $\sigma(\al_1),\dots,\sigma(\al_n)$. For each $1\le i\le n$, let
$h^\sigma_i\in\fh$ be the Cartan element in the $\msl_2$-triple containing the root vector
corresponding to $\sigma(\al_i)$.
By~\cite[\S 6]{CFK} we know that there exists an isomorphism
from the ring of symmetric polynomials in $m_i$ variables $x^i_1,\dots,x^i_{m_i}$ to the space
${\rm U}(\C h^\sigma_i\T t\bC[t])v$, mapping $\sum_{j=1}^{m_i} (x^i_j)^k$ to $h^\sigma_i\T t^k$.
Clearly this map preserves degree.
Since $h^\sigma_i$, $1\le i\le n$ form a basis of $\fh$, we obtain a graded surjection
$A(w_0\la)\simeq B(w_0\la)\to A_{\sigma(\la)}$.

Now we show that $\ch A_{\si(\la)} \le \ch A(w_0\la)=(q)_{w_0\la}^{-1}$, which will imply that the above surjection is in fact an isomorphism. For this we use
the fusion product procedure with one factor, as follows. Let us start with $\W(w_0\la)$ and consider a generator $v_\sigma$ for the
$\sigma(\la)$-weight space in degree $0$. It is easy to see (because $\W(w_0\la)$ is a $\fg$-module) that
the character of ${\rm U}(\fh\T t\bC[t])v_\sigma$ is equal to $(q)_{w_0\la}^{-1}$.
Consider the filtration $F_s$ on $\W(w_0\la)$ given by $F_0=\bC v_\sigma$ and
\[
F_s={\rm span}\{(x_1\T (t-1)^{k_1}\dotsm x_l\T (t-1)^{k_l})v_\sigma : x_i\in\fg,\, k_1+\dots+k_l\le s\}.
\]
Then the associated graded $\fn^{af}$-module ${\rm gr}\,F_\bullet$ is a quotient of
$\W_{\sigma(\la)}$. In fact, $\lim_{s\to\infty} F_s=\W(w_0\la)$ and all the defining relations
of $\W_{\sigma(\la)}$ hold in ${\rm U}(\fn^{af})v_\sigma$. In other words,
$\widehat\si (f_{-\al}\T t)v_\sigma=0$ and $\widehat\si (e_{\al}\T 1)^{-\langle \al^\vee,\la\rangle +1}v_\sigma=0$
for all $\al\in\Delta_+$. We note that these relations hold even before passing to the associated graded module.
In fact, the operators we are interested in are either the powers of $e_\al\T 1$, or the powers of
$f_{-\al}\T t$, acting as $f_{-\al}\T (t-1)$. In the first case the required relations are obtained immediately
(since the degree zero subspace of $\W(w_0\la)$ is the irreducible $\fg$-module $V(w_0\la)$). In the second
case the power of $f_{-\al}\T (t-1)$ is the sum of several summands and each of the summands is the product of
factors of the form $f_{-\al}\T t$ or $f_{-\al}\T 1$. Now the desired relations follow from the statement that the
$\fh$-weights of $\W(w_0\la)$ and that of $V(w_0\la)$ coincide.

We thus obtain a surjection
$\W_{\sigma(\la)}\to {\rm U}(\fn^{af})v_\sigma$ and hence the surjection $A_{\sigma(\la)}\to {\rm U}(\fh\otimes t\C[t])v_\si$, which has character $(q)_{w_0\la}^{-1}$.
\end{proof}

\comment{On the other hand (see \cite[Remark 11]{FL}), we have ${\rm U}(\fn^{af})v_\si\simeq{\rm U}(\fn^{af})\bar{v}_\si$, where $\bar{v}_\si$ is the image of $v_\si$ in ${\rm gr}\,F_\bullet$. Hence we have a surjection $A_{\si(\la)}\to{\rm U}(\fh\otimes t\C[t])v_\si$.}

\begin{cor}\label{estimate}
The graded character of generalized global Weyl modules satisfy the estimate
\[
\ch \W_{\sigma(\la)}\le \frac{\ch W_{\sigma(\la)}}{(q)_{w_0\la}},
\]
where the inequality means the coefficient-wise inequality.
The inequality is an equality if and only if the $A_{\sigma(\la)}$ action on $W_{\sigma(\la)}$ is free.
\end{cor}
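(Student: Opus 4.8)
The plan is to leverage the bimodule structure from Lemma~\ref{bi-module} together with the character formula $\ch A_{\sigma(\la)} = (q)_{w_0\la}^{-1}$ furnished by Proposition~\ref{Asi}. First I would record that, as a consequence of the defining relations, the quotient $\W_{\sigma(\la)} / (\fh \otimes t\bC[t])\W_{\sigma(\la)}$ is a cyclic $\fn^{af}$-module satisfying all the defining relations of the generalized local Weyl module $W_{\sigma(\la)}$, hence there is a surjection $\W_{\sigma(\la)}/(\fh\otimes t\bC[t])\W_{\sigma(\la)} \twoheadrightarrow W_{\sigma(\la)}$ of $\fn^{af}$-modules (in fact of $\fn^{af}\oplus\fh\otimes 1\oplus\bC d$-modules, with the obvious affine weights attached as in Remark~\ref{R:hwt-glob}). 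Actually this surjection is an isomorphism: $W_{\sigma(\la)}$ visibly carries a surjection from $\W_{\sigma(\la)}$ killing $\fh\otimes t\bC[t]$, so the two cyclic modules have the same defining relations.

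Next I would exploit the fact that $A_{\sigma(\la)} = \U(\fh\otimes t\bC[t])v$ surjects onto $\W_{\sigma(\la)}$ through its action on the cyclic vector, and that by Lemma~\ref{bi-module} this action is compatible with the $\fn^{af}$-action in the sense that $(rv)\cdot a = r(av)$. Choose a $\bC$-basis $\{a_s\}$ of $A_{\sigma(\la)}$ consisting of homogeneous elements (with respect to the $t$-grading), and a set $\{r_\alpha v\}$ of homogeneous vectors in $\W_{\sigma(\la)}$ whose images in $W_{\sigma(\la)} \simeq \W_{\sigma(\la)}/(\fh\otimes t\bC[t])\W_{\sigma(\la)}$ form a basis. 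The key claim is that the vectors $\{r_\alpha(a_s v)\} = \{(r_\alpha v)\cdot a_s\}$ span $\W_{\sigma(\la)}$; this follows by an induction on $t$-degree, since modulo $(\fh\otimes t\bC[t])\W_{\sigma(\la)}$ the $r_\alpha v$ span, and the ``correction terms'' lie in lower filtration pieces of the natural increasing filtration by $\U(\fh\otimes t\bC[t])_{\le N}v$. Taking graded characters and using $\ch A_{\sigma(\la)} = (q)_{w_0\la}^{-1}$ together with $\ch W_{\sigma(\la)} = \ch(\W_{\sigma(\la)}/(\fh\otimes t\bC[t])\W_{\sigma(\la)})$, the spanning statement yields the coefficient-wise inequality
\[
\ch\W_{\sigma(\la)} \le \frac{\ch W_{\sigma(\la)}}{(q)_{w_0\la}}.
\]

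Finally, for the equality criterion: the displayed inequality is an equality precisely when the spanning set $\{(r_\alpha v)\cdot a_s\}$ is a basis, i.e. linearly independent. Since $\W_{\sigma(\la)}$ is already a module over $A_{\sigma(\la)}$ generated over $A_{\sigma(\la)}$ by the $r_\alpha v$ modulo $(\fh\otimes t\bC[t])\W_{\sigma(\la)}$, linear independence of all the products is equivalent to the statement that $\W_{\sigma(\la)}$ is free over $A_{\sigma(\la)}$ with basis lifting a basis of $W_{\sigma(\la)}$ — equivalently, that $A_{\sigma(\la)}$ acts freely on $W_{\sigma(\la)}$ in the sense that $W_{\sigma(\la)} \otimes_{\bC} A_{\sigma(\la)} \to \W_{\sigma(\la)}$ is an isomorphism. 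I expect the main obstacle to be making the induction on $t$-degree for the spanning statement fully rigorous: one must check that the $A_{\sigma(\la)}$-submodule generated by the chosen lifts $r_\alpha v$ does surject onto $\W_{\sigma(\la)}$, controlling how elements of $\fh\otimes t\bC[t]$ move a vector $rv$ into the span of the $(r_\alpha v)\cdot a_s$. This is where one uses cyclicity of $\W_{\sigma(\la)}$ over $\fn^{af}$ and the bimodule identity to push all $\fh\otimes t\bC[t]$-factors to the right onto the cyclic vector; the classical argument of \cite[\S3.4]{CFK}, \cite{CFK} for $\si=\id$ adapts verbatim once the $\widehat\sigma$-twisted relations are in place.
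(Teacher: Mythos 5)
Your proposal takes essentially the same route as the paper's proof: the bimodule structure of Lemma~\ref{bi-module}, the identification of the local Weyl module with the quotient of $\W_{\si(\la)}$ by the augmentation ideal of $A_{\si(\la)}$, a graded Nakayama (induction on $t$-degree) argument to get the spanning statement and hence the coefficient-wise inequality, and Proposition~\ref{Asi} to replace $\ch A_{\si(\la)}$ by $(q)_{w_0\la}^{-1}$; the equality criterion via the surjection from the free module is also the intended one.

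One correction is needed in your write-up. The subspace you should quotient by is $A^+_{\si(\la)}\W_{\si(\la)}$, the span of all $(rv).a$ with $a\in A_{\si(\la)}$ of positive degree --- equivalently, the $\fn^{af}$-submodule of $\W_{\si(\la)}$ generated by $(\fh\T t\bC[t])v$. This is an $\fn^{af}$-submodule precisely because the right $A_{\si(\la)}$-action commutes with the left $\fn^{af}$-action, and the quotient by it is $W_{\si(\la)}$ by comparison of defining relations. The subspace $(\fh\T t\bC[t])\W_{\si(\la)}$ that you wrote is the image of the \emph{left} Lie-algebra action on all of $\W_{\si(\la)}$; it is not an $\fn^{af}$-submodule (since $[\fn_+\T 1,\fh\T t\bC[t]]\not\subset \fh\T t\bC[t]$) and does not coincide with $A^+_{\si(\la)}\W_{\si(\la)}$, so the quotient you form is not a priori a cyclic $\fn^{af}$-module. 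Once this substitution is made --- which is consistent with your own remark about pushing all $\fh\T t\bC[t]$-factors to the right onto the cyclic vector --- your Nakayama induction and the equality criterion go through exactly as in the paper.
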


\begin{proof}Let $A_{\si(\la)}^+$ be the span of positive degree terms of $A_{\si(\la)}$.
Thanks to Lemma~\ref{bi-module} the generalized Weyl module $\W_{\si(\la)}$ is a bimodule over $\fn^{af}\times A_{\si(\la)}$.
By definition, the quotient $\W_{\si(\la)}/A_{\si(\la)}^+\W_{\si(\la)}$ is isomorphic to the local Weyl module
$W_{\si(\la)}$. Therefore,
\begin{align*}
\ch \W_{\si(\la)}\le \ch W_{\si(\la)}\ch A_{\si(\la)}
\end{align*}
with equality if and only if the action of $A_{\si(\la)}$ on $W_{\si(\la)}$ is free.
\end{proof}


Let us define the generalized global Weyl modules with characteristics.
Namely, let $\W_{\sigma(\la)}(m)$ be the cyclic $\fn^{af}$-module defined by the relations
\[
\widehat{\sigma}(f_{-\alpha}\T t)v=0,\quad \widehat{\sigma}(e_{\alpha})^{l_{\al,m}+1}v=0\quad \text{for all $\al\in \Delta_+$},
\]
where $l_{\al,m}= -{\langle \alpha^\vee, \lambda \rangle-|\{\beta^i_l : {\rm Re}\,\beta^i_l=-\alpha, l \leq m \}|}.$
As in the local case, the modules with characteristics do depend on the coroots $\beta^i_m$, but we omit them in the notation.
Let $A_{\sigma(\lambda)}(m)=\U(\fh\otimes t\C[t])v$ be the weight $\sigma(\la)$ subspace of $\W_{\sigma(\la)}(m)$
equipped with the natural structure of a graded algebra. We have the following lemma.
\begin{lem}
$A_{\sigma(\lambda)}(m)$ acts on $\W_{\sigma(\la)}(m)$ from the right making it into a bimodule over $\fn^{af}\times A_{\sigma(\lambda)}(m)$. For any $i$ such that $\langle \al_i^\vee,\la\rangle <0$ and any
$m>0$ the algebra $A_{\sigma(\lambda)}(m)$ is a quotient of $A_{\sigma(\lambda+\om_i)}$.
\end{lem}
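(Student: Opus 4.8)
The plan is to prove the two assertions separately. For the bimodule statement I would simply invoke the argument of \cite[\S3.4]{CFK} used for Lemma~\ref{bi-module}: it uses only that $\W_{\si(\la)}(m)$ is cyclic over $\fn^{af}$ with cyclic vector $v$ and that $A_{\si(\la)}(m)={\rm U}(\fh\T t\bC[t])v$ is its $\si(\la)$-weight space, both of which hold by construction, so nothing new is needed. The rest of the work is the claim that $A_{\si(\la)}(m)$ is a quotient of $A_{\si(\la+\om_i)}$ for $m>0$, and here I would proceed in two steps modelled on the proof of Proposition~\ref{Asi}.

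First I would isolate the relevant combinatorial fact. Write $\la=-\sum_{j=1}^n m_j\om_j$ with $m_j\ge 0$; the hypothesis $\pair{\al_i^\vee}{\la}<0$ means $m_i\ge 1$, so $\la+\om_i=-\sum_j(m_j-\de_{ij})\om_j$ still lies in $X_-$ and $A_{\si(\la+\om_i)}$ is defined. The coroots $\be^i_1,\dotsc,\be^i_r$ attached to a reduced expression of $t_{-\om_i}$ are exactly the elements of $\Inv(t_{-\om_i})\subset\De(\tY)$ (see \eqref{E:beta}), and for each $j$ the number of indices $l\le r$ with $\mathrm{Re}\,\be^i_l=-\al_j^\vee$ equals $\pair{\al_j^\vee}{\om_i}=\de_{ij}$; moreover when $j=i$ this unique index is $l=1$, since $\mathrm{Re}\,\be^i_1=-\al_i^\vee$. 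Hence for every $m\ge 1$ one gets $l_{\al_j,m}=m_j-\de_{ij}$ for all $j$, so the defining relations of $\W_{\si(\la)}(m)$ indexed by the simple roots coincide with the corresponding relations of $\W_{\si(\la+\om_i)}$, whose exponent for $\al_j$ is $-\pair{\al_j^\vee}{\la+\om_i}+1=m_j-\de_{ij}+1$.

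Next I would repeat the first half of the proof of Proposition~\ref{Asi}. For $1\le j\le n$, let $h^\si_j\in\fh$ be the Cartan element of the $\msl_2$-triple attached to the root $\si(\al_j)$. Reading the relations $\widehat\si(f_{-\al_j}\T t)v=0$ and $\widehat\si(e_{\al_j})^{m_j-\de_{ij}+1}v=0$ inside this $\msl_2[t]$ and invoking \cite[\S6]{CFK}, I would conclude that ${\rm U}(\bC h^\si_j\T t\bC[t])v\subset\W_{\si(\la)}(m)$ is a graded quotient of the ring of symmetric polynomials in $m_j-\de_{ij}$ variables, with the $k$-th power sum mapping to $h^\si_j\T t^k$. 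In particular $h^\si_j\T t^k$ for $k>m_j-\de_{ij}$ lies in the subalgebra generated by $h^\si_j\T t^{k'}$ with $k'\le m_j-\de_{ij}$; since $\{h^\si_j\}_{j=1}^n$ is a basis of $\fh$, the assignment $\sum_l(x^j_l)^k\mapsto h^\si_j\T t^k$ then defines a surjection of graded algebras
\[
B(w_0(\la+\om_i))=\bigotimes_{j=1}^n\bC[x^j_1,\dotsc,x^j_{m_j-\de_{ij}}]^{S_{m_j-\de_{ij}}}\twoheadrightarrow A_{\si(\la)}(m).
\]
By Proposition~\ref{Asi}, together with the preceding proposition identifying $A(\mu)$ with $B(\mu)$, one has $A_{\si(\la+\om_i)}\simeq A(w_0(\la+\om_i))\simeq B(w_0(\la+\om_i))$ as graded algebras, compatibly with $h^\si_j\T t^k\leftrightarrow\sum_l(x^j_l)^k$; composing gives the desired surjection $A_{\si(\la+\om_i)}\twoheadrightarrow A_{\si(\la)}(m)$.

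The main point requiring care is the $\msl_2$-reduction in the last step: one must check that the twisted relations $\widehat\si(f_{-\al_j}\T t)v=0$ and $\widehat\si(e_{\al_j})^{m_j-\de_{ij}+1}v=0$, read in the $\msl_2[t]$ of $\si(\al_j)$, place $v$ in the situation treated in \cite[\S6]{CFK}, both when $\si(\al_j)$ is a positive root and when it is negative. This is the same issue as in the proof of Proposition~\ref{Asi}, so it should cause no new trouble. I note that only the surjection in this one direction is needed here; I would not need the reverse surjection $A_{\si(\la)}(m)\twoheadrightarrow A_{\si(\la+\om_i)}$ coming from $\W_{\si(\la)}(m)\twoheadrightarrow\W_{\si(\la+\om_i)}$, nor the resulting fact that the two algebras are isomorphic for $m\ge 1$.
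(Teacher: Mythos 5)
Your proposal is correct and follows essentially the same route as the paper: the bimodule structure is obtained by the $(rv).a=r(av)$ formula and the $\msl_2$/CFK argument of Lemma~\ref{bi-module}, and the surjection $A_{\si(\la+\om_i)}\twoheadrightarrow A_{\si(\la)}(m)$ is deduced by observing that for $m\ge 1$ the exponents $l_{\al_j,m}+1$ at the simple roots equal $-\pair{\al_j^\vee}{\la+\om_i}+1$ and then running the power-sum argument in the $\msl_2$-triples attached to the $\si(\al_j)$. Your explicit computation $l_{\al_j,m}=m_j-\de_{ij}$ (via $\#\{l:\mathrm{Re}\,\be^i_l=-\al_j^\vee\}=\pair{\al_j^\vee}{\om_i}$ and $\mathrm{Re}\,\be^i_1=-\al_i^\vee$) is exactly the paper's case split $j\ne i$ versus $j=i$.
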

\begin{proof}
The (right) action of $A_{\sigma(\lambda)}(m)$ is defined by the standard formula $(rv).a=r(av)$. Using
the $\msl_2$ theory (see also Lemma \ref{bi-module}), one checks that the action is well-defined.

To prove the second claim it suffices
to check that all the defining relations of $A_{\sigma(\la+\om_i)}\simeq A(w_0(\la+\om_i))$ hold in $A_{\sigma(\lambda)}(m)$
for positive $m$. We note that in type $A_1$ the claim follows from Proposition \ref{Asi} since for $\msl_2$ the
length of $t_{-\om_1}$ equals to one ($r=1$) and hence the module with characteristics coincide with the generalized
Weyl modules.
Now let us fix $h_j^\si$ as in the proof of Proposition \ref{Asi}. Then we need to show that for any $1\le j\le n$
the operators $h_j^\si\T t^k$, $k>0$ acting on the cyclic vector $v$ of $\W_{\si(\la)}(m)$ satisfy all the relation of the power
sums $\sum_{b=1}^{\langle\al_j^\vee,\la+\om_i\rangle} x_b^k$ (here $x_b$ are auxiliary variables).  For a fixed $j$ the element
$h_j^\si$ is the Cartan element in the $\msl_2$-triple corresponding to the weight $\si(\al_j)$. Assume first that $j\ne i$.
Then we have the relations $\widehat\si (f_{\al_j}\T t)v=0$ and $\widehat\si (e_{\al_j}\T 1)^{\langle\al_j^\vee,\la\rangle}v=0$ in
$\W_{\si(\la)}(m)$. Since for $j\ne i$ we have $\langle\al_j^\vee,\la\rangle=\langle\al_j^\vee,\la+\om_i\rangle$, the claim follows from the
type $A_1$ result. For $i=j$, since ${\rm Re}\,\beta_1^i=-\al_i^\vee$, we have the following relations in  $\W_{\si(\la)}(m)$
for all positive $m$:
\[
\widehat\si (f_{\al_i}\T t)v=0,\quad \widehat\si (e_{\al_i}\T 1)^{\langle\al_i^\vee,\la+\om_i\rangle}v=0,
\]
which imply the claim
\end{proof}

\begin{cor}
For any $m>0$  and $1\le i\le n$ such that $m_i>0$ one has
\[
\ch \W_{\sigma(\la)}(m)\le \frac{\ch W_{\sigma(\la)}(m)}{(q)_{w_0(\la+\om_i)}}.
\]
\end{cor}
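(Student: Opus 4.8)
The plan is to follow the proof of Corollary~\ref{estimate} almost verbatim, with Lemma~\ref{bi-module} replaced by the preceding Lemma. The starting point is that, by the preceding Lemma, $\W_{\sigma(\la)}(m)$ is a bimodule over $\fn^{af}\times A_{\sigma(\la)}(m)$, so one may form the $\fn^{af}$-submodule $N=\W_{\sigma(\la)}(m).A_{\sigma(\la)}(m)^+$, the image of the right action of the augmentation ideal $A_{\sigma(\la)}(m)^+$ (the span of the positive-degree part of $A_{\sigma(\la)}(m)$). First I would check that $\W_{\sigma(\la)}(m)/N\simeq W_{\sigma(\la)}(m)$, the generalized local Weyl module with characteristics. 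Indeed, since the right action satisfies $v.a=av$, every element $(h\T t^k)v$ with $h\in\fh$ and $k\ge 1$ --- being a positive-degree element of $A_{\sigma(\la)}(m)$ --- lies in $N$, so the image of $v$ in the quotient satisfies $(\fh\T t\bC[t])v=0$ in addition to the defining relations $\widehat{\sigma}(f_{-\al}\T t)v=0$ and $\widehat{\sigma}(e_\al)^{l_{\al,m}+1}v=0$ of $\W_{\sigma(\la)}(m)$; these are exactly the defining relations of $W_{\sigma(\la)}(m)$, so $W_{\sigma(\la)}(m)$ surjects onto $\W_{\sigma(\la)}(m)/N$. Conversely $N$ visibly lies in the kernel of the tautological surjection $\W_{\sigma(\la)}(m)\twoheadrightarrow W_{\sigma(\la)}(m)$, so the two surjections are mutually inverse.

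With this identification in hand, the standard graded-Nakayama argument for a cyclic bimodule (choose a graded lift of $\W_{\sigma(\la)}(m)/N$ inside $\W_{\sigma(\la)}(m)$ and observe that multiplying it by $A_{\sigma(\la)}(m)$ is onto) gives the coefficient-wise estimate
\[
\ch\W_{\sigma(\la)}(m)\le \ch W_{\sigma(\la)}(m)\cdot\ch A_{\sigma(\la)}(m),
\]
exactly as in Corollary~\ref{estimate}. It then remains to bound $\ch A_{\sigma(\la)}(m)$. Here I would invoke the second assertion of the preceding Lemma: because $m>0$ and $m_i=-\langle\al_i^\vee,\la\rangle>0$, the algebra $A_{\sigma(\la)}(m)$ is a graded quotient of $A_{\sigma(\la+\om_i)}$, and by Proposition~\ref{Asi} the latter is isomorphic, as a graded algebra, to $A(w_0(\la+\om_i))$, whose character is $(q)_{w_0(\la+\om_i)}^{-1}$. (Note that $\la+\om_i\in X_-$ when $\langle\al_i^\vee,\la\rangle<0$, so $w_0(\la+\om_i)\in X_+$ and the symbol $(q)_{w_0(\la+\om_i)}$ is defined.) A graded surjection yields $\ch A_{\sigma(\la)}(m)\le (q)_{w_0(\la+\om_i)}^{-1}$ coefficient-wise, and combining the two inequalities gives the claim.

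I do not anticipate a genuine obstacle: the statement is a formal consequence of the preceding Lemma together with Proposition~\ref{Asi}. The only points that require a moment's care are the matching of defining relations in the identification $\W_{\sigma(\la)}(m)/N\simeq W_{\sigma(\la)}(m)$ --- handled exactly as in the $m=0$ case of Corollary~\ref{estimate} --- and the bookkeeping that the hypotheses $m>0$ and $m_i>0$ are precisely what the preceding Lemma requires in order for $A_{\sigma(\la)}(m)$ to be controlled by $A_{\sigma(\la+\om_i)}$ rather than by $A_{\sigma(\la)}$ itself; this is what makes the shifted denominator $(q)_{w_0(\la+\om_i)}$ appear in place of $(q)_{w_0\la}$.
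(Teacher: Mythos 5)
Your proposal is correct and is exactly the argument the paper intends (the corollary is stated without proof, as an immediate consequence of the preceding lemma combined with Proposition~\ref{Asi}, in direct parallel with how Corollary~\ref{estimate} follows from Lemma~\ref{bi-module}). Both the identification $\W_{\sigma(\la)}(m)/\W_{\sigma(\la)}(m).A_{\sigma(\la)}(m)^+\simeq W_{\sigma(\la)}(m)$ and the bound $\ch A_{\sigma(\la)}(m)\le (q)_{w_0(\la+\om_i)}^{-1}$ via the graded surjection from $A_{\sigma(\la+\om_i)}\simeq A(w_0(\la+\om_i))$ are the right ingredients, used in the right order.
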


\subsection{Structure theorems}
\label{SS:struct}
Let $\la=-\sum_{i=1}^n m_i\om_i\in X_-$.
\begin{lem}\label{m=1}
If $\sigma({\rm Re}\,\beta^i_1)\in\De_-$, then
\begin{equation}\label{neg}
\ch W_{\sigma(\la)}=(1-q^{m_i})\ch W_{\sigma(\la)}(1)+ \ch W_{\sigma s_{{\rm Re}\,\beta^i_1}(\la)}.
\end{equation}
If $\sigma({\rm Re}\,\beta^i_1)\in\De_+$, then
\begin{equation}\label{pos}
\ch W_{\sigma(\la)}=(1-q^{m_i})\ch W_{\sigma(\la)}(1)+ q^{m_i}\ch W_{\sigma s_{{\rm Re}\,\beta^i_1}(\la)}.
\end{equation}
\end{lem}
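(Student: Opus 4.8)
The plan is to carry out the first ($m=1$) step of the chain~\eqref{chain} for $\sigma$, then again for $\sigma s_{{\rm Re}\,\beta^i_1}$, to use Theorem~\ref{localmain} to identify the two kernels, and finally to eliminate a common term between the two resulting character identities. Throughout, for an affine weight $\mu+\ell\delta$ I write $z^{\mu+\ell\delta}=q^\ell X^\mu$, so that $\ch W^\eta_{\tau(\la)}(m)=z^{\eta-\tau(\la)}\,\ch W_{\tau(\la)}(m)$ for any affine weight $\eta$, the default weight $\tau(\la)$ having trivial $\delta$-part.

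First I would observe that ${\rm Re}\,\beta^i_1=-\al_i^\vee$ is a negative simple coroot, so $s_{{\rm Re}\,\beta^i_1}=s_i$, and by~\eqref{E:QBG-simple} both edges $\sigma\to\sigma s_i$ and $\sigma s_i\to\sigma$ belong to $\QBG$. Hence Theorem~\ref{localmain} applies to the surjections $W_{\sigma(\la)}(0)\to W_{\sigma(\la)}(1)$ and $W_{\sigma s_i(\la)}(0)\to W_{\sigma s_i(\la)}(1)$, and neither is an isomorphism. Passing from characteristic $0$ to characteristic $1$ strengthens only the relation attached to $\al_i$, from $\widehat\sigma(e_{\al_i})^{m_i+1}v=0$ to $\widehat\sigma(e_{\al_i})^{m_i}v=0$ (here $-\langle\al_i^\vee,\la\rangle=m_i$), so the kernel of the first surjection is the $\fn^{af}$-submodule generated by $\widehat\sigma(e_{\al_i})^{m_i}v$, and Theorem~\ref{localmain} identifies it with $W^\nu_{\sigma s_i(\la)}(1)$, where $\nu$ is the affine weight of that generator. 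As $\widehat\sigma(e_{\al_i})$ has affine weight $\widehat\sigma(\al_i)$ (the compatibility recorded just before Theorem~\ref{localmain}), we get $\nu=\sigma(\la)+m_i\,\widehat\sigma(\al_i)$; similarly the kernel of the second surjection is $W^{\nu'}_{\sigma(\la)}(1)$ with $\nu'=\sigma s_i(\la)+m_i\,\widehat{\sigma s_i}(\al_i)$. The two short exact sequences give
\[ \ch W_{\sigma(\la)}=\ch W_{\sigma(\la)}(1)+z^{\nu-\sigma s_i(\la)}\,\ch W_{\sigma s_i(\la)}(1), \]
\[ \ch W_{\sigma s_i(\la)}=\ch W_{\sigma s_i(\la)}(1)+z^{\nu'-\sigma(\la)}\,\ch W_{\sigma(\la)}(1). \]

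Eliminating $\ch W_{\sigma s_i(\la)}(1)$ between these identities yields
\[ \ch W_{\sigma(\la)}=\bigl(1-z^{(\nu-\sigma s_i(\la))+(\nu'-\sigma(\la))}\bigr)\ch W_{\sigma(\la)}(1)+z^{\nu-\sigma s_i(\la)}\,\ch W_{\sigma s_i(\la)}, \]
so it remains to evaluate two monomials. From $s_i(\la)=\la+m_i\al_i$ (recall $\langle\al_i^\vee,\la\rangle=-m_i$) one has $\sigma(\la)-\sigma s_i(\la)=-m_i\,\sigma(\al_i)$, hence $\nu-\sigma s_i(\la)=m_i(\widehat\sigma(\al_i)-\sigma(\al_i))$ and $\nu'-\sigma(\la)=m_i(\widehat{\sigma s_i}(\al_i)+\sigma(\al_i))$, and their sum is $m_i(\widehat\sigma(\al_i)+\widehat{\sigma s_i}(\al_i))$. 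The key identity is $\widehat\sigma(\al_i)+\widehat{\sigma s_i}(\al_i)=\delta$: the roots $\sigma(\al_i)$ and $\sigma s_i(\al_i)=-\sigma(\al_i)$ are opposite, so exactly one of them is negative, and the rule defining $\widehat\sigma$ (resp.\ $\widehat{\sigma s_i}$) inserts a single $\delta$ for the negative one. Therefore $z^{(\nu-\sigma s_i(\la))+(\nu'-\sigma(\la))}=q^{m_i}$, producing the coefficient $1-q^{m_i}$. For the remaining monomial, $\nu-\sigma s_i(\la)=m_i(\widehat\sigma(\al_i)-\sigma(\al_i))$ equals $0$ when $\sigma(\al_i)\in\De_+$ and $m_i\delta$ when $\sigma(\al_i)\in\De_-$; and since $\sigma({\rm Re}\,\beta^i_1)\in\De_-$ holds exactly when $\sigma(\al_i)\in\De_+$, this is the former case precisely under the hypothesis of~\eqref{neg} and the latter precisely under that of~\eqref{pos}. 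Thus $z^{\nu-\sigma s_i(\la)}$ is $1$ or $q^{m_i}$ in the two cases, which gives~\eqref{neg} and~\eqref{pos} respectively.

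The module-theoretic input here is routine---two applications of Theorem~\ref{localmain} and a linear elimination. The real work is the affine ($\delta$-)grading bookkeeping: one must be sure $\nu$ and $\nu'$ are the weights of the correct cyclic vectors and compute the $\delta$-components of $\nu-\sigma s_i(\la)$ and $\nu'-\sigma(\la)$ precisely, since these $\delta$-components are exactly what become the powers of $q$ distinguishing~\eqref{neg} from~\eqref{pos}. I expect the identity $\widehat\sigma(\al_i)+\widehat{\sigma s_i}(\al_i)=\delta$ and the case split on the sign of $\sigma(\al_i)$ to be the crux.
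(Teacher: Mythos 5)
Your proof is correct and follows essentially the same route as the paper: two applications of Theorem~\ref{localmain} at $m=0$ for $\sigma$ and for $\sigma s_i$, followed by elimination of $\ch W_{\sigma s_i(\la)}(1)$ between the two resulting character identities. The only difference is that you derive the $\delta$-shifts (the powers of $q$) uniformly from the identity $\widehat\sigma(\al_i)+\widehat{\sigma s_i}(\al_i)=\delta$, whereas the paper simply asserts them in each of the two cases; your bookkeeping is consistent with the paper's and correctly yields $(1-q^{m_i})$ and the case-dependent coefficient of $\ch W_{\sigma s_i(\la)}$.
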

\begin{proof}
Let $s=s_{{\rm Re}\, \beta^i_1}$. Since ${\rm Re}\,\be^i_1=-\al_i^\vee$, there exist edges $\si\to\si s$ and $\si s\to \si$ in $\QBG$.
By Theorem~\ref{localmain}, it follows that there exist exact sequences of $\fn^{af}$-modules (with possibly a grading shift in the kernel)
\begin{gather}
0\to W_{\sigma s(\la)}(1)\to W_{\sigma(\la)}\to W_{\sigma(\la)}(1)\to 0,\\
0\to W_{\sigma(\la)}(1)\to W_{\sigma s(\la)}\to W_{\sigma s(\la)}(1)\to 0.
\end{gather}
Let us consider two cases. First, suppose that $\sigma({\rm Re}\,\beta^i_1)$ is negative (recall that ${\rm Re}\,\beta^i_1$ is always
negative). Then we have the following identities for the characters
\begin{align*}
\ch W_{\sigma(\la)} &= \ch W_{\sigma(\la)}(1) + \ch W_{\sigma s(\la)}(1),\\
\ch W_{\sigma s(\la)} &= \ch W_{\sigma s(\la)}(1) + q^{m_i}\ch W_{\sigma(\la)}(1).
\end{align*}
Substituting the expression for $\ch W_{\sigma s(\la)}(1)$ from the second equation to the first equation, we
obtain \eqref{neg}.

Now assume that $\sigma({\rm Re}\, \beta^i_1)$ is positive. Then one has
\begin{align*}
\ch W_{\sigma(\la)}&= \ch W_{\sigma(\la)}(1) + q^{m_i}\ch W_{\sigma s(\la)}(1),\\
\ch W_{\sigma s(\la)}&= \ch W_{\sigma s(\la)}(1) + \ch W_{\sigma(\la)}(1).
\end{align*}
The two equations imply \eqref{pos}.
\end{proof}

\begin{lem}\label{notiso}
Assume that $m>0$ and the surjection $$W_{\sigma(\la)}(m)\to W_{\sigma(\la)}(m+1)$$ is not an isomorphism. Then the surjection
$$W_{\sigma s_{{\rm Re}\, \beta^i_{m+1}}(\la)}(m)\to W_{\sigma s_{{\rm Re} \beta^i_{m+1}}(\la)}(m+1)$$ is an isomorphism.
\end{lem}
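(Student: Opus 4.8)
The plan is to recall the criterion from Theorem~\ref{localmain} for when the surjection $W_{\sigma(\la)}(m)\to W_{\sigma(\la)}(m+1)$ fails to be an isomorphism, and then show that the analogous criterion forces the corresponding surjection at $\sigma s_{{\rm Re}\,\be^i_{m+1}}$ to be an isomorphism. Write $\gamma = {\rm Re}\,\be^i_{m+1}\in\Delta_-^\vee$ and $s = s_\gamma$. By Theorem~\ref{localmain}, the hypothesis that $W_{\sigma(\la)}(m)\to W_{\sigma(\la)}(m+1)$ is \emph{not} an isomorphism is equivalent to the existence of an edge $\sigma\to\sigma s$ in $\QBG$. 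We must deduce that there is \emph{no} edge $\sigma s\to (\sigma s) s = \sigma$ in $\QBG$, which by Theorem~\ref{localmain} (applied with $\sigma$ replaced by $\sigma s$) is exactly the statement that $W_{\sigma s(\la)}(m)\to W_{\sigma s(\la)}(m+1)$ is an isomorphism.

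So the crux is a purely combinatorial statement about the quantum Bruhat graph: if $w\to ws_\gamma$ is an edge, then $ws_\gamma \to w$ is \emph{not} an edge, unless $\gamma$ is simple. First I would invoke \eqref{E:QBG-simple}: the only case in which both $w\to ws_\gamma$ and $ws_\gamma\to w$ are edges is when the underlying (real) root of $\gamma$ is simple. But here $\gamma = {\rm Re}\,\be^i_{m+1}$ with $m+1\ge 2$, and ${\rm Re}\,\be^i_1 = -\al_i^\vee$ is the unique simple entry in the sequence $\be^i_\bullet$ coming from a reduced expression of $t_{-\om_i}$ (as noted in the text just after \eqref{E:beta} in the generalized local Weyl module discussion: "${\rm Re}\,\be_1^k=-\al^\vee_k$ is the negative simple coroot", and the later $\be^i_m$ have non-simple real parts since a reduced word has no repeated simple reflections producing a simple $\be_j$ after the first occurrence). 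Hence for $m+1\ge 2$ the root $\gamma$ is non-simple, so at most one of the two edges $\sigma\to\sigma s$, $\sigma s\to\sigma$ can be present; since by hypothesis $\sigma\to\sigma s$ is present, $\sigma s\to\sigma$ is absent, and we are done.

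The main obstacle is making precise the claim that ${\rm Re}\,\be^i_{m+1}$ is non-simple for $m+1\ge 2$. This should follow from the structure of reduced expressions: if $t_{-\om_i} = \pi s_{i_1}\cdots s_{i_r}$ is reduced and $\be^i_m = s_{i_r}\cdots s_{i_{m+1}}(\al^\vee_{i_m})$ equals a simple coroot $\al^\vee_j$, then, because $\Inv(t_{-\om_i}) = \{\be^i_1,\dots,\be^i_r\}$ consists of distinct affine coroots and the $\be^i_m$ are the inversions in the standard order, a simple $\be^i_m$ can only occur when the partial product $s_{i_{m+1}}\cdots s_{i_r}$ fixes all simple roots other than possibly adjusting $\al_{i_m}$ — which for a reduced word forces $m=1$; alternatively one argues that the first time a given index $i_m$ appears reading from the right, $\be^i_m$ is simple, but in $t_{-\om_i}$ (whose inversion set is explicitly known) there is exactly one index, namely $i$, for which the corresponding $\be$ can be simple, and it occurs at position $1$. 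I would include a short lemma or a pointer to \cite{FM3} for this; everything else is a direct application of Theorem~\ref{localmain} and \eqref{E:QBG-simple}, so no serious computation is needed.
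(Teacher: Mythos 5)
Your proposal is correct and follows essentially the same route as the paper: translate the hypothesis via Theorem~\ref{localmain} into the existence of the edge $\sigma\to\sigma s_{{\rm Re}\,\be^i_{m+1}}$ in $\QBG$, note via \eqref{E:QBG-simple} that both an edge and its reverse occur only for simple labels, and conclude since ${\rm Re}\,\be^i_{m+1}$ is a negative simple coroot only when $m=0$. The paper simply asserts this last fact (it follows from $\Inv(t_{-\om_i})$ containing exactly one affine coroot with simple real part, namely $\be^i_1=-\al_i^\vee+\de$), so your extra discussion of it, while a bit rough, does not change the argument.
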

\begin{proof}
The map $W_{\sigma(\la)}(m)\to W_{\sigma(\la)}(m+1)$ is not an isomorphism if and only if there exists an edge
$\sigma\to \sigma s_{{\rm Re} \beta^i_{m+1}}$  in the quantum Bruhat graph. So our lemma can be rephrased in the following way:
if there is an edge $\sigma\to \sigma s_{{\rm Re} \beta^i_{m+1}}$ in QBG and $m>0$, then there is no edge backwards
$\sigma s_{{\rm Re} \beta^i_{m+1}}\to\sigma$. We note that two edges $\sigma\to \sigma s_\al$ and $\sigma s_\al\to \sigma$
are both present in the QBG if and only if $\langle \al^\vee,\rho\rangle=1$, which is true for simple coroots $\al^\vee$ only.
However, $\beta^i_{m+1}$ is simple if and only if $m=0$.
\end{proof}

Now let us consider the generalized global Weyl module with characteristics $\W_{\sigma(\la)}(m)$.
We note that $\W_{\sigma(\la)}(0)\simeq \W_{\sigma(\la)}$ and $\W_{\sigma(\la)}(r)\simeq \W_{\sigma(\la+\om_i)}$, where
$r=l(t_{-\om_i})$. We consider the surjection
\begin{equation}\label{map}
\W_{\sigma(\la)}(m)\to \W_{\sigma(\la)}(m+1)
\end{equation}
mapping cyclic vector to cyclic vector. The kernel of this map is given by
\begin{equation}\label{kernel}
{\rm U}(\fn^{af}) \widehat{\sigma}(e_{{\rm Re}\beta^i_{m+1}})^{l_{{\rm Re}\beta^i_{m+1},m}} v.
\end{equation}

\begin{lem}\label{L:to-ker}
Let $m_i>0$ and $s=s_{{\rm Re} \beta^i_{m+1}}$.
Assume that the map $\W_{\sigma(\la)}(m)\to \W_{\sigma(\la)}(m+1)$ is not an isomorphism.
Then there exists a surjective map of $\fn^{af}$-modules
\begin{equation}\label{isosurj}
\W_{\sigma s(\la)}(m) \to {\rm ker} (\W_{\sigma(\la)}(m)\to \W_{\sigma(\la)}(m+1))
\end{equation}
mapping the cyclic vector of $\W_{\sigma s(\la)}(m)$ to $(e_{{\rm Re} \beta^i_{m+1}})^{l_{{\rm Re} \beta^i_{m+1},m}} v$.
\end{lem}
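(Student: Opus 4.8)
The plan is to invoke a universal‑property argument. By \eqref{kernel}, the kernel of \eqref{map} is the cyclic $\fn^{af}$‑module ${\rm U}(\fn^{af})\,w$ where $w:=\widehat\sigma(e_\al\T 1)^{l}v$, $\al\in\De_+$ is the root with $\al^\vee=-{\rm Re}\,\be^i_{m+1}$ (so $s=s_\al$), and $l:=l_{{\rm Re}\,\be^i_{m+1},m}=l_{\al,m}$. Hence it suffices to check that $w$ satisfies the defining relations of $\W_{\sigma s(\la)}(m)$, namely (using that $l_{\al,m+1}=l_{\al,m}-1$ while $l_{\be,m+1}=l_{\be,m}$ for $\be\neq\al$)
\[
\widehat{\sigma s_\al}(f_{-\be}\T t)\,w=0,\qquad \widehat{\sigma s_\al}(e_\be\T 1)^{l_{\be,m}+1}\,w=0\qquad(\be\in\De_+).
\]
I would verify these one root $\be$ at a time, organised according to the rank‑two subsystem $\De\cap(\Z\al+\Z\be)$.

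If $\{\al,\be\}$ spans a subsystem of type $A_1\times A_1$ (equivalently, $(\al,\be)=0$ and $\al\pm\be\notin\De$), then $s_\al\be=\be$, so $\widehat{\sigma s_\al}(x_\be)=\widehat\sigma(x_\be)$ and this operator commutes with $\widehat\sigma(e_\al\T 1)$; both relations on $w$ then follow at once from the corresponding relations on $v$ in $\W_{\sigma(\la)}(m)$. If $\be=\al$, the sign‑independent identities $\widehat{\sigma s_\al}(f_{-\al}\T t)=\widehat\sigma(e_\al\T 1)$ and $\widehat{\sigma s_\al}(e_\al\T 1)=\widehat\sigma(f_{-\al}\T t)$ reduce the first relation to $\widehat\sigma(e_\al\T 1)^{l+1}v=0$ and the second to the vanishing of $\widehat\sigma(f_{-\al}\T t)^{l+1}\widehat\sigma(e_\al\T 1)^{l}v$; the latter is an $\msl_2$‑current‑algebra identity inside $\W_{\sigma(\la)}(m)$ proved using only the $\al$‑relations on $v$ and the bimodule structure of Lemma~\ref{bi-module}, which lets one push the Cartan elements $h^\sigma_\al\T t^k$ produced by the brackets to the right, where they act through $A_{\sigma(\la)}(m)$ on an already‑annihilated vector. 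In the remaining cases — $\De\cap(\Z\al+\Z\be)$ being of type $A_2$, $B_2$ or $G_2$ — one works in the associated rank‑two current subalgebra: commuting $\widehat{\sigma s_\al}(f_{-\be}\T t)$, resp.\ $\widehat{\sigma s_\al}(e_\be\T 1)$ (which correspond to the root $\sigma s_\al\be=\sigma(\be-\langle\be,\al^\vee\rangle\al)$), past $\widehat\sigma(e_\al\T 1)^{l}$ produces, via the Serre relations, a sum of terms, each annihilated using the $\al$‑ and $\be$‑relations on $v$ together with the precise value of $l_{\be,m}$, which absorbs any $\be^i_j$ with $j\le m$ and ${\rm Re}\,\be^i_j=-\be^\vee$. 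Throughout one tracks the $\delta$‑grading shifts (Remark~\ref{shift}) and the signs of the various $\sigma$‑ and $\sigma s_\al$‑images of roots, which determine whether $\widehat\sigma$ and $\widehat{\sigma s_\al}$ produce $e\T 1$‑ or $f\T t$‑operators. The hypothesis that \eqref{map} is not an isomorphism, i.e.\ that the edge $\sigma\to\sigma s_\al$ lies in $\QBG$, guarantees $w\neq 0$ and the exactness of the exponents above; the whole analysis runs parallel to \cite[proof of Theorem 2.18]{FM3}.

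The hard part will be the rank‑two cases, and within them the simultaneous bookkeeping of the sign pattern of $\{\sigma(\be+k\al)\}$ and of $\sigma s_\al\be$, and the verification that the exponents in the $e$‑type relations come out exactly as prescribed by the functions $l_{\be,m}$. One feature genuinely new to the global setting is that $(\fh\T t\bC[t])v\neq 0$; however, Cartan elements occur in the brackets only when verifying the second relation for $\be=\al$ (every other bracket is supported on the root $\al+\be\neq 0$), and there they are precisely what the $\msl_2$‑computation disposes of via the bimodule structure. Hence, beyond that point, the remaining work is the same combinatorial and $\msl_2$/$\msl_3$ input already present in the local case, applied with only routine changes.
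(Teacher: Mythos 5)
Your proposal is correct and takes essentially the same route as the paper: both arguments amount to checking that the kernel generator $w=\widehat\sigma(e_\al\T 1)^{l_{\al,m}}v$ satisfies the defining relations of $\W_{\sigma s(\la)}(m)$, deferring the rank-two commutator computations to the local argument of \cite[Theorem 2.18]{FM3} and isolating the single genuinely global ingredient, namely $\widehat\sigma(f_{-\al}\T t)^{l_{\al,m}+1}\widehat\sigma(e_\al\T 1)^{l_{\al,m}}v=0$ (exponent one higher than in the local case, since $(\fh\T t\C[t])v\neq 0$), which both you and the paper settle by $\msl_2$-theory. Your treatment is uniform in $m$, whereas the paper separates $m>0$ (handled via Lemma~\ref{notiso} and the local isomorphism) from $m=0$, but the content is the same.
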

\begin{proof}
Recall the local version of map \eqref{isosurj}: Theorem~\ref{localmain} gives a surjection (in fact, an isomorphism)
of the $\fn^{af}$-modules
$$W_{\sigma s(\la)}(m+1) \to {\rm ker} (W_{\sigma(\la)}(m)\to W_{\sigma(\la)}(m+1)).$$
If $m>0$, Lemma \ref{notiso} gives $W_{\sigma s(\la)}(m+1)\simeq W_{\sigma s(\la)}(m)$ and
hence $W_{\sigma s(\la)}(m)$ surjects onto the above mentioned kernel.
Now the proof for global Weyl modules proceeds exactly as in \cite[Theorem~2.18]{FM3}.

So we are left with the case $m=0$. We want to show that there exists a surjection
$\W_{\sigma s(\la)}\to{\rm ker} (\W_{\sigma(\la)}\to \W_{\sigma(\la)}(1))$. The proof goes along the
same lines as in the case $m>0$ with the only difference: we have to show that
\[
\widehat\sigma(f_{-\al_i}\T t)^{m_i+1} \widehat\sigma(e_{\al_i})^{m_i}v=0
\]
(instead of $\widehat\sigma(f_{-\al_i}\T t)^{m_i} \widehat\sigma(e_{\al_i})^{m_i}v=0$ in the local situation).
The equality is clear from the $A_1$ case.
\end{proof}

\begin{rem}
We show below that the map \eqref{isosurj} is in fact an isomorphism. In particular, for $m=0$ the kernel is isomorphic to
$\W_{\sigma s(\la)}\simeq {\rm U}(\fn^{af})v_\sigma\subset \W_{\sigma(\la)}$, where $v_\si\in \W_{\sigma(\la)}$ is the
$\fh$-weight $\si(\la)$ degree zero extremal vector.
\end{rem}

Now we are ready to prove the following structure theorem.

\begin{thm}\label{T:struct}
Let $\la\in X_-$.
\begin{enumerate}
\item If there is no edge $\sigma \to \sigma s_{{\rm Re} \beta_{m+1}}$ in $\QBG$, then the surjection \eqref{map} is an isomorphism.
If the edge does exist, then the kernel of \eqref{map} is isomorphic to the generalized global Weyl module with characteristics
$\W_{\sigma_1(\la)}(m)$ for $\sigma_1=\sigma s_{{\rm Re} \beta^i_{m+1}}$.\\
\item For any $\sigma\in W$ the module $\W_{\sigma(\la)}$ is free over $A_{\sigma(\la)}$ and the quotient is isomorphic to
the local Weyl module $W_{\sigma(\la)}$. In particular,
\[
\ch \W_{\sigma(\la)}=\frac{\ch W_{\sigma(\la)}}{(q)_{w_0\la}}.
\]
\item For any $m>0$ we have
\[
\ch \W_{\sigma(\la)}(m)=\frac{\ch W_{\sigma(\la)}(m)}{(q)_{w_0(\la+\om_i)}}.
\]
\end{enumerate}
\end{thm}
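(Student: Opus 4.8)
The plan is to argue by induction on the level $|\la|:=\sum_i m_i$ of $-\la$, where $\la=-\sum_i m_i\om_i\in X_-$. The base case $\la=0$ is immediate, since then $W_{\sigma(\la)}$ and $\W_{\sigma(\la)}$ are one-dimensional. For the inductive step, fix $i$ with $m_i>0$ and a reduced word for $t_{-\om_i}$, so that we have the chain $\W_{\sigma(\la)}(0)\to\dots\to\W_{\sigma(\la)}(r)=\W_{\sigma(\la+\om_i)}$; the inductive hypothesis applies to $\la+\om_i$ (which has level $|\la|-1$), and in particular already gives part (2) for $\la+\om_i$ and hence part (3) at $m=r$.

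First I would identify the highest-weight algebras of the modules with characteristics: for every $m>0$ one has $A_{\sigma(\la)}(m)\cong A(w_0(\la+\om_i))$. Indeed, $A_{\sigma(\la)}(m)$ is a quotient of $A_{\sigma(\la+\om_i)}\cong A(w_0(\la+\om_i))$ by the Lemma asserting that $A_{\sigma(\la)}(m)$ is a quotient of $A_{\sigma(\la+\om_i)}$ together with Proposition~\ref{Asi}, while the chain maps (sending cyclic vector to cyclic vector) induce a surjection $A_{\sigma(\la)}(m)\twoheadrightarrow A_{\sigma(\la+\om_i)}$; two graded algebras with finite-dimensional graded components that surject onto one another are isomorphic. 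By the bimodule structure this upgrades the estimate to $\ch\W_{\sigma(\la)}(m)\le\ch W_{\sigma(\la)}(m)/(q)_{w_0(\la+\om_i)}$ with equality exactly when $A_{\sigma(\la)}(m)$ acts freely; the analogous statement for $\W_{\sigma(\la)}$ itself is Corollary~\ref{estimate}.

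The main obstacle is the reverse inequality, i.e.\ the freeness of these highest-weight-algebra actions, since all three character identities (and the freeness assertion in (2)) reduce to it. I would establish it exactly as in the classical case: following the proof of Proposition~\ref{Asi}, realize $\W_{\sigma(\la)}$ and each $\W_{\sigma(\la)}(m)$ as the $\fn^{af}$-submodule generated by a suitable extremal-type vector inside a classical global Weyl module ($\W(w_0\la)$, resp.\ $\W(w_0(\la+\om_i))$), and use the $(t-1)$-adic filtration to compare. Combining this with the structural results on generalized local Weyl modules from \cite{FM3} (in particular that $\dim W_{\sigma(\la)}$ is independent of $\sigma$) and the freeness of the classical global Weyl module (Theorem~\ref{loc-glob-classical}), one obtains that the generic specialization of $\W_{\sigma(\la)}(m)$ over $\operatorname{Spec}A_{\sigma(\la)}(m)$ has the same dimension as $W_{\sigma(\la)}(m)$; as $A_{\sigma(\la)}(m)$ is a polynomial ring and the special fiber at the graded maximal ideal is $W_{\sigma(\la)}(m)$, upper semicontinuity of fiber dimension then forces local freeness, hence freeness. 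This yields the exact character formulas in parts (2) and (3) and their with-characteristics analogue; I expect the identification of the generic specialization to be the delicate point.

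Part (1) is then a formal consequence, proved by downward induction on $m$ using the quantum Bruhat graph. If there is no edge $\sigma\to\sigma s_{{\rm Re}\,\be^i_{m+1}}$, then $W_{\sigma(\la)}(m)\cong W_{\sigma(\la)}(m+1)$ by Theorem~\ref{localmain}, so the surjection \eqref{map} has the same graded character as its target and is therefore an isomorphism. If the edge exists, put $\sigma_1=\sigma s_{{\rm Re}\,\be^i_{m+1}}$; for $m>0$ the coroot ${\rm Re}\,\be^i_{m+1}$ is non-simple, hence there is no edge $\sigma_1\to\sigma$ and the previous case already gives $\W_{\sigma_1(\la)}(m)\cong\W_{\sigma_1(\la)}(m+1)$ with known character (for $m=0$ one first runs a secondary induction on the length of the minimal representative of the coset $\sigma\,\mathrm{Stab}_W(\la)$, the base $\sigma=\id$ being the classical module $\W_\la\cong\W(w_0\la)$). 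The local short exact sequence of Theorem~\ref{localmain}, the surjection $\W_{\sigma_1(\la)}(m)\twoheadrightarrow\ker(\W_{\sigma(\la)}(m)\to\W_{\sigma(\la)}(m+1))$ of Lemma~\ref{L:to-ker}, Lemma~\ref{notiso}, and the character formulas already in hand then show that this surjection is a bijection in every graded component, hence an isomorphism $\W_{\sigma_1(\la)}(m)\cong\ker(\W_{\sigma(\la)}(m)\to\W_{\sigma(\la)}(m+1))$, which is the remaining assertion of (1).
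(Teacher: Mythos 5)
Your overall architecture (get the character identities of parts (2)--(3) first, then deduce part (1) by comparing the character of the kernel with that of $\W_{\sigma_1(\la)}(m)$ via Lemma~\ref{L:to-ker}, Lemma~\ref{notiso} and Theorem~\ref{localmain}) matches the paper, and that last deduction is sound. The problem is that you have not actually proved the freeness on which everything rests. You propose to get it by identifying the \emph{generic} fiber of $\W_{\sigma(\la)}(m)$ over $\operatorname{Spec}A_{\sigma(\la)}(m)$ and invoking semicontinuity, and you yourself flag the identification of that generic fiber as ``the delicate point'' without supplying an argument. This is not a routine verification: the standard mechanism for computing generic fibers of global Weyl modules (evaluation at distinct points, giving a tensor product of fundamental local Weyl modules) uses that the algebra is of the form $\fg\otimes\C[t]$, whereas $\fn^{af}=\fn_+\otimes 1\oplus\fg\otimes t\C[t]$ is not of this form, and for general $\sigma\in W$ there is no automorphism of $\fn^{af}$ reducing $\W_{\sigma(\la)}$ to the classical case (that trick only works for $\fsl_2$, as in \S\ref{S:A1}). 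The embedding ${\rm U}(\fn^{af})v_\sigma\subset\W(w_0\la)$ from the proof of Proposition~\ref{Asi} only produces a \emph{quotient} of $\W_{\sigma(\la)}$ and hence only upper bounds on characters and fiber dimensions; it gives no lower bound on the generic fiber, which is what your semicontinuity argument needs. So the central claim of parts (2) and (3) is asserted, not proved.

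The paper closes exactly this gap by a completely different mechanism, which you should compare with: it inducts on $\ell(\sigma)$ (not on $|\la|$), and at each step combines (i) the \emph{exact} local character identity of Lemma~\ref{m=1}, rewritten as \eqref{ind}; (ii) the a priori coefficientwise \emph{inequalities} $\ch\W_{\sigma(\la)}(1)\le \ch W_{\sigma(\la)}(1)/(q)_{w_0(\la+\om_i)}$ and $\ch\W_{\sigma s(\la)}\le\ch W_{\sigma s(\la)}/(q)_{w_0\la}$ from Corollary~\ref{estimate} and its with-characteristics analogue; and (iii) the exact sequence coming from the surjection of Lemma~\ref{L:to-ker}, which forces $\ch\W_{\sigma(\la)}=\ch\W_{\sigma(\la)}(1)+\ch(\ker)$. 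Since the two upper bounds in (ii) sum to the known character of $\W_{\sigma(\la)}$ by (i) and (the inductive hypothesis for) part (2) at $\sigma$, both inequalities must be equalities; this simultaneously yields freeness for $\sigma s$, part (1) at $m=0$, and part (3) at $m=1$, and the same squeeze is then iterated along the chain for $m\ge1$ using Lemma~\ref{notiso}. No generic fibers are ever computed. If you want to salvage your route you would need to prove a tensor-product decomposition of generalized global Weyl modules at generic parameters, which is a substantial result not available from the ingredients you cite.
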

\begin{proof}
First, let $m=0$ and let us prove (1) and (2) by induction on $\ell(\sigma)$.
If $\ell(\sigma)=0$ then (2) is known (Theorem~\ref{loc-glob-classical}). Assume that (2) is known for $\sigma$ and suppose that $\si s>\si$ for $s=s_i, 1\le i \le n$. If $\langle \al_i^\vee, \la \rangle =0$, then $\si s_i(\lambda)=\lambda$ and we have $(2)$ for $\sigma s_i$. We may therefore assume that $\langle \al_i^\vee,\la\rangle <0$. Since $\si s>\si$, we have $\si(\al_i^\vee)>0$. Since ${\rm Re}\,\be_1^i=-\al_i^\vee$, Lemma~\ref{m=1} gives
\begin{equation}\label{ind}
\frac{\ch W_{\sigma(\la)}}{(q)_{w_0\la}}=\frac{\ch W_{\sigma(\la)}(1)}{(q)_{w_0(\la+\om_i)}}+
\frac{\ch W_{\sigma s(\la)}}{(q)_{w_0\la}}
\end{equation}
where $s=s_{{\rm Re} \beta^i_1}=s_i$. The left-hand side is the character of the global Weyl module $\W_{\sigma(\la)}$. The first summand on the right-hand side is greater than or equal to $\ch \W_{\sigma(\la)}(1)$ and the second summand satisfies
\[
\frac{\ch W_{\sigma s(\la)}}{(q)_{w_0\la}}\ge \ch \W_{\sigma s(\la)}\ge \ch \eqref{kernel}.
\]
Since $\ch \W_{\si(\la)} = \ch \W_{\si(\la)}(1)+\ch \eqref{kernel}$ by Lemma~\ref{L:to-ker}, formula \eqref{ind} implies that
\begin{itemize}
\item $\ch \W_{\sigma(\la)}(1)=\ch W_{\sigma(\la)}(1)/(q)_{w_0(\la+\om_i)}$,
\item $\eqref{kernel}\simeq \W_{\sigma s(\la)}$,
\item $\ch \W_{\sigma s(\la)}=\ch W_{\sigma s(\la)}/(q)_{w_0\la}$, i.e., $A_{\sigma s(\la)}$ acts freely.
\end{itemize}
Thus we have proved (1) for $\sigma$ and (2) for $\sigma s_i$. This gives the induction step.

Now let us go further with $m=2,3,\dots$ in the chain of maps
\[
\W_{\sigma(\la)}\to \W_{\sigma(\la)}(1)\to \W_{\sigma(\la)}(2)\to \dots.
\]
Assume that for some $m\ge 1$ we know that $\ch \W_{\sigma(\la)}(m)=\ch W_{\sigma(\la)}(m)/(q)_{w_0(\la+\om_i)}$ and the surjection
$\W_{\sigma(\la)}(m)\to \W_{\sigma(\la)}(m+1)$ is not an isomorphism. According to the local statement (Theorem~\ref{localmain}), we have
\[
\ch W_{\sigma(\la)}(m)=\ch W_{\sigma(\la)}(m+1) + \ch W_{\sigma s_{{\rm Re} \beta^i_m(\la)}}(m+1).
\]
Using Lemma \ref{notiso} we replace $\ch W_{\si s_{{\rm Re} \beta^i_m(\la)}}(m+1)$ by
$\ch W_{\si s_{{\rm Re} \beta^i_m(\la)}}(m)$ and obtain
\[
\ch \W_{\sigma(\la)}(m)=\ch W_{\sigma(\la)}(m+1)/(q)_{w_0(\la+\om_i)}+\ch W_{\sigma s_{{\rm Re} \beta^i_m(\la)}}(m)/(q)_{w_0(\la+\om_i)}.
\]
Since the first summand on the right-hand side is greater than or equal to the character of $\W_{\sigma(\la)}(m+1)$ and the second summand is greater than or equal to the character of the kernel of the map $\W_{\sigma(\la)}(m)\to \W_{\sigma(\la)}(m+1)$, we argue as we did in (2) to prove (3) for $m+1$. This completes the proof.
\end{proof}

\begin{cor}
The surjection \eqref{isosurj} is an isomorphism.
\end{cor}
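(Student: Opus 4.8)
The plan is to deduce this from the character bookkeeping already carried out in the proof of Theorem~\ref{T:struct}. By Lemma~\ref{L:to-ker}, the map \eqref{isosurj} is a surjective morphism of $\fn^{af}$-modules, and both its source $\W_{\sigma s(\la)}(m)$ and its target $\mathrm{ker}(\W_{\sigma(\la)}(m)\to\W_{\sigma(\la)}(m+1))$ are graded modules with finite-dimensional graded components. Hence it suffices to check that source and target carry the same graded character, with the caveat that the map is homogeneous of a possibly nonzero degree --- namely the $\delta$-component of the weight shift in Remark~\ref{shift}, equivalently the degree in $\W_{\sigma(\la)}(m)$ of the vector $\widehat{\sigma}(e_{{\rm Re}\beta^i_{m+1}})^{l_{{\rm Re}\beta^i_{m+1},m}}v$ to which the cyclic vector is sent. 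One should therefore present the source as $\W^\nu_{\sigma s(\la)}(m)$ for the appropriate affine weight $\nu$ (cf.\ Remark~\ref{R:hwt-glob}), so that the comparison of characters is between objects with matching grading conventions.

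The next step is to isolate the identity $\ch\eqref{kernel}=\ch\W_{\sigma s(\la)}(m)$, which is already implicit in the proof of Theorem~\ref{T:struct}. When $m=0$, combining the exact sequence $0\to\eqref{kernel}\to\W_{\sigma(\la)}\to\W_{\sigma(\la)}(1)\to0$ with Lemma~\ref{m=1} (using both its positive and negative cases in order to cover all $\sigma$, including $\sigma=w_0$), the estimate of Corollary~\ref{estimate}, and parts (2) and (3) of Theorem~\ref{T:struct}, one obtains $\ch\eqref{kernel}=\ch W_{\sigma s(\la)}/(q)_{w_0\la}=\ch\W_{\sigma s(\la)}$ (up to the grading shift above, which on the left-hand side shows up as the degree of the cyclic vector of the kernel). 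When $m>0$, the same inputs --- now with Theorem~\ref{localmain} and Lemma~\ref{notiso} used to rewrite $\ch W_{\sigma(\la)}(m)$ and Theorem~\ref{T:struct}(3) applied to $\W_{\sigma(\la)}(m+1)$ --- give $\ch\eqref{kernel}=\ch W_{\sigma s(\la)}(m)/(q)_{w_0(\la+\om_i)}=\ch\W_{\sigma s(\la)}(m)$.

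With the character identity in hand the conclusion is immediate: \eqref{isosurj} is a homogeneous surjection of graded $\fn^{af}$-modules with finite-dimensional graded pieces whose source and target have equal graded character, hence an isomorphism. I do not expect a real obstacle here --- the content is entirely contained in the proof of Theorem~\ref{T:struct}, which uses only the surjectivity (not the bijectivity) of \eqref{isosurj}, so there is no circularity. The only thing requiring care is the bookkeeping: tracking the grading shift on the kernel consistently between the two sides, and observing that when \eqref{map} is already an isomorphism there is nothing to prove, since \eqref{isosurj} is defined precisely in the complementary case.
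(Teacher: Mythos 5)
Your proposal is correct and follows essentially the same route as the paper: the corollary is exactly the bullet point $\eqref{kernel}\simeq \W_{\sigma s(\la)}$ extracted from the character-squeeze in the proof of Theorem~\ref{T:struct}, where the surjection of Lemma~\ref{L:to-ker} is sandwiched between the estimate of Corollary~\ref{estimate} and the exact identity \eqref{ind} (resp.\ its $m>0$ analogue via Theorem~\ref{localmain} and Lemma~\ref{notiso}), forcing all inequalities to be equalities. Your added care about the grading shift of Remark~\ref{shift} and about invoking the positive case \eqref{pos} of Lemma~\ref{m=1} when $\sigma s<\sigma$ (which the paper's induction does not write out explicitly) is a correct and welcome completion of the same argument, and you rightly note there is no circularity since only surjectivity of \eqref{isosurj} feeds into Theorem~\ref{T:struct}.
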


As another corollary, we obtain the following theorem.
\begin{thm}\label{filtration}
Fix $\sigma\in W$ and $\la\in X_-$ such that $-m_i=\langle\al_i^\vee,\la\rangle<0$ for some $i=1,\dots,n$.
Then we have an embedding of $\fn^{af}\oplus \fh \T 1\oplus \bC d$-modules
\[
\W^{\sigma(\la),m_i}_{\sigma(\la)}\to \W^{\si(\la)}_{\sigma(\la)}
\]
with the cyclic vector $v$ mapped to $\hat\sigma (f_i\T t)^{m_i} (\hat\sigma e_i)^{m_i}v$.
The cokernel can be filtered in such a way that each subquotient is isomorphic to a generalized global Weyl module of the form
$\W^\nu_{\tau(\la+\om_i)}$ for some $\tau\in W$ and some affine weight $\nu$.
The number of these generalized global Weyl modules is equal to the dimension of the fundamental
local Weyl module $W(\om_i)$ and the labeling set consists of the paths in QBG of the form
\[
\si\to \si s_{{\rm Re}\be^i_{m_1+1}}\to \si s_{{\rm Re}\be^i_{m_1+1}}s_{{\rm Re}\be^i_{m_2+1}}\to\dots
\]
for a sequence $0\le m_1<m_2<\dots <r$.
For a path $p$ the corresponding element $\tau$ is the end of $p$.
\end{thm}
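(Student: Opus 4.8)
The plan is to bootstrap everything off Theorem~\ref{T:struct}, mimicking the way the local filtration was obtained from Theorem~\ref{localmain} in the discussion following it; the essential new observation is that the submodule to be cut off is precisely the transport of the $m=0$ kernel of the chain for $\sigma s_i$, so that after removing it no further $m=0$ step occurs and the recursion terminates.

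First I would identify the embedding. Since $\mathrm{Re}\,\beta^i_1=-\al_i^\vee$ is simple, $\QBG$ has an edge $\sigma\to\sigma s_i$, so by Theorem~\ref{T:struct}(1), Lemma~\ref{L:to-ker} and the corollary that \eqref{isosurj} is an isomorphism, the kernel $K$ of $\W_{\sigma(\la)}=\W_{\sigma(\la)}(0)\to\W_{\sigma(\la)}(1)$ is isomorphic to $\W^{\nu_0}_{\sigma s_i(\la)}$ for a suitable affine weight $\nu_0$, via a map sending the cyclic vector to $\bar v:=(\widehat\sigma e_{\al_i})^{m_i}v$. Checking the two cases $\sigma(\al_i)\in\De_\pm$ separately, one sees that $\widehat\sigma(f_i\T t)^{m_i}\bar v$ goes over, under this isomorphism, to $(\widehat{\sigma s_i}\,e_{\al_i})^{m_i}\bar v$, which by \eqref{kernel} generates $\ker(\W^{\nu_0}_{\sigma s_i(\la)}(0)\to\W^{\nu_0}_{\sigma s_i(\la)}(1))$. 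Applying Theorem~\ref{T:struct}(1), Lemma~\ref{L:to-ker} and the same corollary once more — this time to $\sigma s_i$, and using $(\sigma s_i)s_i=\sigma$ — shows that $N:=\U(\fn^{af})\,\widehat\sigma(f_i\T t)^{m_i}(\widehat\sigma e_i)^{m_i}v\subseteq\W_{\sigma(\la)}$ is isomorphic to $\W^{\nu}_{\sigma(\la)}$ for some $\nu$; computing the $\fh$-weight and the degree of its generator pins $\nu=\sigma(\la)+m_i\delta$, so that $N\simeq\W^{\sigma(\la),m_i}_{\sigma(\la)}$ and the embedding carries $v$ to $\widehat\sigma(f_i\T t)^{m_i}(\widehat\sigma e_i)^{m_i}v$. (The weight shifts $\nu_0,\nu$ are tracked by the global form of Remark~\ref{shift}.)

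Next I would filter the cokernel. The computations above say $N=\ker(\W^{\nu_0}_{\sigma s_i(\la)}(0)\to\W^{\nu_0}_{\sigma s_i(\la)}(1))$ inside $K\simeq\W^{\nu_0}_{\sigma s_i(\la)}$, hence $K/N\simeq\W^{\nu_0}_{\sigma s_i(\la)}(1)$; since $\W_{\sigma(\la)}/K\simeq\W_{\sigma(\la)}(1)$, the inclusions $N\subseteq K\subseteq\W_{\sigma(\la)}$ give a short exact sequence
\[
0\longrightarrow\W^{\nu_0}_{\sigma s_i(\la)}(1)\longrightarrow\W_{\sigma(\la)}/N\longrightarrow\W_{\sigma(\la)}(1)\longrightarrow 0 .
\]
Now filter the two outer terms by running their chains up to characteristic $r$, e.g.\ $\W_{\sigma(\la)}(1)\to\W_{\sigma(\la)}(2)\to\cdots\to\W_{\sigma(\la)}(r)\simeq\W_{\sigma(\la+\om_i)}$, and similarly for $\W^{\nu_0}_{\sigma s_i(\la)}(1)$. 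By Theorem~\ref{T:struct}(1) the successive kernels are generalized global Weyl modules with characteristics of the form $\W^{\nu}_{\tau s_{\mathrm{Re}\,\beta^i_{m+1}}(\la)}(m)$, and at every such step $m\ge 1$, so the evident global analogue of Lemma~\ref{notiso} (whose proof is purely a statement about $\QBG$, using Theorem~\ref{T:struct}(1) for the isomorphism criterion) shows this kernel is in fact $\W^{\nu}_{\tau s_{\mathrm{Re}\,\beta^i_{m+1}}(\la)}(m+1)$; thus, recursing into it, the characteristic strictly increases. Hence the recursion terminates and every terminal subquotient has characteristic $r$, i.e.\ is a $\W^{\nu_p}_{\tau_p(\la+\om_i)}$. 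The branches are in bijection with the paths $\sigma\to\sigma s_{\mathrm{Re}\,\beta^i_{m_1+1}}\to\cdots$ in $\QBG$ with $0\le m_1<m_2<\cdots<r$ — those with $m_1=0$ arising from the subobject $\W^{\nu_0}_{\sigma s_i(\la)}(1)$, the empty path and those with $m_1\ge 1$ from the quotient $\W_{\sigma(\la)}(1)$ — with $\tau_p$ the endpoint of $p$; this indexing set, and the fact that it has $\dim W(\om_i)$ elements, is identical to the local situation and is contained in \cite{FM3}.

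The delicate points I expect are: (i) the case analysis $\sigma(\al_i)\in\De_\pm$ in the identification $\widehat\sigma(f_i\T t)^{m_i}(\widehat\sigma e_i)^{m_i}v\leftrightarrow(\widehat{\sigma s_i}\,e_{\al_i})^{m_i}\bar v$ (and the attendant degree bookkeeping); and, more substantively, (ii) verifying that the characteristic-bumping via Lemma~\ref{notiso} genuinely stops the recursion from oscillating between $\sigma$ and $\sigma s_i$. This is precisely where cutting off $N$ earns its keep: $N$ removes the unique $m=0$ step, after which along every branch one only ever meets steps with $m\ge 1$, and these strictly increase the characteristic.
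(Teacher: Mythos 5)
Your proposal is correct and follows exactly the route the paper intends: the paper states Theorem~\ref{filtration} as an immediate corollary of Theorem~\ref{T:struct}(1), the isomorphism of \eqref{isosurj}, and the local filtration procedure following Theorem~\ref{localmain}, without writing out details. Your elaboration — identifying the embedded copy as the $m=0$ kernel of the $m=0$ kernel via $\widehat{\sigma s_i}(e_{\al_i}\otimes 1)=\widehat{\sigma}(f_{-\al_i}\otimes t)$, and using the global analogue of Lemma~\ref{notiso} to force strictly increasing characteristics on all remaining branches — is precisely the intended argument and the bookkeeping checks out.
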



\comment{
\begin{rem}
We have monomial bases for the global Weyl modules compatible with the filtration described above. Each vector in the basis
is a monomial in variables $e_\al\T 1$ and $f_\al\T t$ applied to the cyclic vector.
\end{rem}
}

\section{Nonsymmetric $q$-Whittaker functions}
\label{S:whitt}

In this section we express the characters of generalized local and global Weyl modules in terms of the $E_\la^\si$ and establish the connection to nonsymmetric $q$-Whittaker functions.

\subsection{Scalar product}
The nonsymmetric Macdonald polynomials $E_\la(X;q,v)$ form an orthogonal basis of $\K[X]$ with respect to Cherednik's scalar product from \cite{Ch-ann}. Their squared norms are given explicitly by (see \cite[(7.5)]{M2} and \cite[(5.5)]{Ch1}):
\begin{align}\label{E:norm-E}
\langle E_\la, E_\la \rangle &= \prod_{\al+m\de^Y\in\Inv(m_\lambda)}\frac{(1-q^mt^{-\pair{\rho^{\vee Y}}{\alpha}-1})(1-q^mt^{-\pair{\rho^{\vee Y}}{\alpha}+1})}{(1-q^mt^{-\pair{\rho^{\vee Y}}{\alpha}})^2}.
\end{align}
We refer to \cite{Ch1} and \cite[Chapter 5]{M3} for proofs of \eqref{E:norm-E}. We note the following property of the minimal representatives $m_\la$ (see \cite[(2.4.7)]{M3}):
\begin{align}\label{E:inv-m}
\al+m\de^Y\in\Inv(m_\la)\Rightarrow\al\in \De_-(Y).
\end{align}

For our purposes, \eqref{E:norm-E} will be sufficient as the definition of the scalar product, which is linear in the first argument and $*$-linear in the second argument (where $q^*=q^{-1}$ and $v^*=v^{-1}$, naturally extended to fractional powers as needed). The adjoint of $T_\si$ with respect to this scalar product is $T_\si^{-1}$ (see \cite[(5.1.22)]{M3}), and hence for any fixed $\si\in W$, the basis $\{E_\la^\si\}_{\la\in X}$ of $\K[X]$ is orthogonal and $\langle E_\la^\si, E_\la^\si\rangle = \langle E_\la, E_\la \rangle.$

With \eqref{E:inv-m} in hand, the norm formula \eqref{E:norm-E} immediately gives
\begin{align*}
\langle E_\la, E_\la \rangle_{v=0} = \prod_{\substack{\al+m\de^Y\in\Inv(m_\lambda)\\\text{$-\alpha$ simple}}}(1-q^m)
\end{align*}
and hence
\begin{align}
\label{E:norm-E-0-anti}
\langle E_\lambda, E_\lambda \rangle_{v=0} =& \prod_{i\in I}\prod_{j=1}^{-(\la,\al_i^Y)}(1-q^j),\quad \text{for $\lambda\in X_-$.}
\end{align}
To get \eqref{E:norm-E-0-anti} we use the fact that, for $\la\in X_-$, we have $m_\la = t_\la$ and
\begin{align*}
\Inv(t_\la)=\{\al+m\de^Y\in\De(\tY) : 0<(\la,\al)\leq m\}.
\end{align*}
In the untwisted case, we have $\al_i^Y = \al_i^{\vee X}$ and $(\la,\al_i^Y)=\pair{\al_i^{\vee X}}{\la}$ for all $i\in I$.

\subsection{Characters of generalized Weyl modules}\label{SS:weyl-mac}
Suppose that $\De(\tX)$ is untwisted. Let $\la\in X_-$ and $\si\in W$ be arbitrary. Comparing \eqref{E:RY-limit-0} with \cite[Theorem B]{FM3}, we arrive at
\begin{align}
\label{E:mac-weyl}
\ch W_{\si(\la)} &= E_\la^\si(X;q,0).
\end{align}
Now comparing Theorem~\ref{T:struct}(2) and \eqref{E:norm-E-0-anti}, we deduce that
\begin{align}
\label{E:mac-glob}
\ch \W_{\si(\la)} &= \frac{E_\la^\si(X;q,0)}{\langle E_\la,E_\la\rangle_{v=0}}.
\end{align}

\subsection{Assumption}
The nonsymmetric $q$-Whittaker function is defined only when $\De(\tX)$ is dual untwisted (see \S\ref{SS:af}). Thus we make the following assumption for the remainder of the paper: $\De(\tX)$ is dual untwisted.

Therefore we have $\De(X)=\De(Y)$, $\De(\tX)=\De(\tY)$, and the pairing $(\cdot,\cdot) : X\times Y=X\times X\to \Q$ is normalized so that $(\al,\al)=2$ for all $\al\in\De_s(X)$. Recall that $e$ is the smallest positive integer such that $(X,Y)\subset\frac{1}{e}\Z$. When there is no possibility of confusion, we will drop superscripts $X$ and $Y$, e.g., we will write $\rho=\rho^X=\rho^Y$. We will, however, distinguish notationally between $X$ and $Y$ when it adds conceptual clarity.

The connection to characters of generalized Weyl modules via \eqref{E:mac-weyl} and \eqref{E:mac-glob} is available when $\De(\tX)$ is untwisted. The overlap with our standing assumption consists of exactly the simply-laced $\De(\tX)$. (Generalized Weyl modules for twisted affinizations are investigated in \cite{FM4}.)

\subsection{Nonsymmetric basic hypergeometric function}
The nonsymmetric $q$-Whittaker function is defined as a limit of the nonsymmetric basic hypergeometric function from \cite{Ch-mehta}. We will follow the notation of \cite[\S3.1]{CO} to introduce the latter, but we also refer the reader to \cite{Sto-dft} and \cite[\S 2.5]{Sto-hc} for more details.

Recall that $\LL=\Q(q^{\frac{1}{2e}},v)$, the extension of $\K$ required for the definition of DAHA and the polynomial module.
Consider the formal series
\begin{align}
\gamma(X)=\sum_{\lambda\in X}q^{\frac{(\lambda,\lambda)}{2}} X^\lambda.
\end{align}
The series $\ga(X)$ can be regarded as an element of $\LL[[X]] = \prod_{\la\in X} \LL X^\la$. See Remark~\ref{R:functions} for another interpretation of $\ga(X)$.

The nonsymmetric basic hypergeometric function \cite{Ch-mehta} is
\begin{align}
G(Z,X) &= (\gamma(Z)\gamma(X))^{-1}\sum_{\lambda\in X} q^{\frac{(\lambda,\lambda)}{2}}v^{-(\lambda_-,2\rho)}\,\frac{E_\lambda(Z)^* E_\lambda(X)}{\langle E_\lambda, E_\lambda\rangle},
\end{align}
where we write $E_\la(X)=E_\la(X;q,v)$ for short and $E_\la(Z)\in\K[Z]=\K[Z^\la : \la\in X]$ are the same nonsymmetric Macdonald polynomials but in a different set of variables $Z^\la$ for $\la\in X$. We define $(Z^\mu)^*=Z^{-\mu}$, as we did in $\K[X]$.

It is often more convenient to work with the auxiliary series
\begin{align}
\Xi(Z,X) &= \gamma(Z)\gamma(X)G(Z,X) = \sum_{\lambda\in X} q^{\frac{(\lambda,\lambda)}{2}}v^{-(\lambda_-,2\rho)}\,\frac{E_\lambda(Z)^* E_\lambda(X)}{\langle E_\lambda, E_\lambda\rangle}
\end{align}
We will refer to $\Xi(Z,X)$ the {\em series part} of $G(Z,X)$. These definitions of $G$ and $\Xi$ differ from \cite{Ch-mehta} by a multiplicative constant, which we do not need in this paper.

\comment{
This is well-defined as a series in $(\Q[t][X,Z])[[q^{1/2}]]$, due to the exponent $(\la,\la)/2$ of $q$. Problem: Translation operators are not defined here? Therefore, it makes sense to consider the specialization $\Xi(Z,X)\big|_{t=0}$.
}

\begin{rem}\label{R:functions}
There are different ways to understand these ``functions.'' Probably what is conceptually the simplest is to assume that $q^{\frac{1}{2e}},v$ are complex numbers with $0<|q^{\frac{1}{2e}}|,|v|<1$. Then $G$ and $\Xi$ can be regarded as meromorphic functions on $(X\otimes_\Z \C)^2$, where $Z^\mu X^\nu$ is the function sending $(z,x)\in(X\otimes_\Z\C)^2$ to $q^{(\mu,z)+(\nu,x)}$. In this setting, all of the infinite sums above are uniformly absolutely convergent on compact sets---see \cite[Lemma~9.2]{Sto-dft} for the relevant estimates. 

If one wishes to work purely algebraically, then one may regard $G$ and $\Xi$ as elements of the subspace of $(\Q[v]((q^{\frac{1}{2e}})))[[X,Z]]=\prod_{\la,\mu\in X} \Q[v]((q^{\frac{1}{2e}}))Z^\la X^\mu$ consisting of $(a_{\la\mu}X^\la Z^\mu)_{\la,\mu\in X}$ such that the order of $a_{\la\mu}$ with respect to $q^{1/(2e)}$ grows at least as fast as $(\la,\la)+(\mu,\mu)$.\footnote{For instance, it would be sufficient to require that there exists a constant $C>0$, depending only on the element $(a_{\la\mu}X^\la Z^\mu)$, such that $\ord(a_{\la\mu})\geq C((\la,\la)+(\mu,\mu))$ whenever $(\la,\la)+(\mu,\mu)$ is sufficiently large.} Multiplication by $\gamma(X)$ (or $\gamma(Z)$) is a well-defined invertible endomorphism of this space.

In either of these two settings, the operators from the polynomial module of DAHA acting on either the $Z$ or $X$ variables (but not both at the same time) extend to these larger spaces of functions.\qed
\end{rem}

The function $G(Z,X)$ has remarkable symmetry (see \cite[Theorem 5.4]{Ch-mehta}):
\begin{align}\label{E:G-duality}
H_Z\cdot G(Z,X) = \varphi(H)_X \cdot G(Z,X),\quad \text{for all $H\in\HH$},
\end{align}
expressed via the duality anti-involution $\varphi:\HH\to\HH$ of \cite{Ch-kz} (see also \cite{I-inv}):
\begin{align}\label{E:phi-T}
\varphi(T_i)&=T_i\\
\label{E:phi-X}
\varphi(X^\la)&=Y^{-\la}
\end{align}
for $i\in I$ and $\la\in X$.
Since $\varphi$ is an involution, the above formulas uniquely determine it. We use subscripts in \eqref{E:G-duality} to indicate the variables upon which the operators are acting. In either set of variables, the operators are those from the polynomial module of DAHA.
\begin{rem}
The assumption that $\De(\tX)$ is dual untwisted is essential here, as the function $G(Z,X)$ exists only in this case. When $X\neq Y$, there does exist an analogue of the anti-homomorphism $\varphi$, but it maps between two DAHAs of different types (see \cite[Theorem~5.11]{H} and \cite[Theorem~4.7]{Sto-surv}).
\end{rem}

\subsection{Nonsymmetric $q$-Whittaker function}
\label{SS:whitt}
Simultaneously extending results of \cite{CM2} (in the $\fsl_2$-case) and \cite{Ch-whitt} (in the symmetric case), the nonsymmetric $q$-Whittaker function $\Omega(Z,X)$ was defined in \cite{CO} by applying a limiting procedure to $G(Z,X)$ in the variable $Z$. In order to make our connection to $\Omega(Z,X)$, we now recall the definition of this limiting procedure.

Let $\cF$ be a suitable $\LL$-vector space of ``functions'' containing $G(Z,X)$ and carrying an action of the the difference-reflection operators from the polynomial module of DAHA; see Remark~\ref{R:functions} for two possibilities for $\cF$.

\comment{
Consider the algebra $\hcF=\Fun(W,\cF)$ of all functions $f:W\to\cF$ under pointwise multiplication. Let $W$ act on $\hcF$ by
$
\si\cdot f(\tau) = f(\si^{-1}\tau).
$
That is, we ignore the $W$-action on $\cF$. Let $e_\si : W\to\cF$ be the characteristic function: $e_\si(\tau)=\de_{\si\tau}$, where $\de_{\si\tau}$ is the Kronecker delta and $1$ means the unit element of $\cF$.
}

Consider the group algebra $\LL[W]=\oplus_{\si\in W}\LL e_\si$, where $e_\si$ denotes the standard basis vector indexed by $\si$. Form the space\footnote{More precisely, if one wishes to equip $\hcF$ with an algebra structure, one should take $\hcF=\Fun(W,\cF)$, the space of all functions on $W$ with values in $\cF$ (under pointwise multiplication). We will not need to make this technical distinction here.} $\hcF=\cF\otimes\LL[W]$. An arbitrary element of $\hcF$ can be represented as a linear combination $\sum_{\si\in W} f_\si e_\si$ with $f_\si\in\cF$, where we omit the tensor product for simplicity of notation. We call $f_\si$ the $\si$-component of $f$.

We now impose the relation $v=q^{k/2}$ for $k\in\Z$.
The nonsymmetric $q$-Whittaker function \cite[\S3.4]{CO} can be defined as follows:
\begin{align}
\label{E:NSWhitt-def-Xi}
\Omega(Z,X) &= (\gamma(Z)\gamma(X))^{-1}\Big(\sum_{\si\in W} \Gamma_{-k\rho}^Z \si_Z^{-1}\cdot \Xi(Z,X)\,e_\si\Big)\Big|_{v=0}.
\end{align}
Note that $\Gamma_{-k\rho}^Z\cdot Z^\la=v^{-(2\rho,\la)}Z^\la$, so the relation $v=q^{k/2}$ is more a notational convenience than a necessity (ultimately, after applying the operator $\Gamma_{-k\rho}^Z$, we can we work entirely in terms of $v$ and forget $k$).
As above, we call $\ga(Z)\ga(X)\Om(Z,X)$ the {\em series part} of $\Omega(Z,X)$. We note that in the analytic setting ($0<|q|<1$), the specialization $v=0$ corresponds to the limit as $k\to\infty$.

\begin{rem}\label{R:q-toda}
It is shown in \cite[Proposition 3.3]{CO} that $\Omega(Z,X)$ is well-defined. There is a compatible limiting procedure on the operators from the DAHA polynomial module $\K[Z]$, giving rise to an action of the nil-DAHA on $\hcF$ at $v=0$. See \cite[Theorem~3.4]{CO}, which also gives the analogue of \eqref{E:G-duality} for the nonsymmetric $q$-Whittaker function $\Om(Z,X)$. In particular, the action of the subalgebra $\K[Y]\subset\HH$ becomes in the limit a homomorphism $\Q(q^{1/(2e)})[Y]\to \End(\hcF_{v=0}), Y^\mu\mapsto\hY^\mu$, which is a nonsymmetric variant of the $q$-Toda system of difference operators. The operators $\hY^\mu$ are called $q$-Toda Dunkl operators. The analogue of \eqref{E:G-duality} in this special case is
\begin{align*}
\hY^\mu_Z\cdot\Omega(Z,X)&=X^{-\mu}\Omega(Z,X).
\end{align*}
\end{rem}

The following theorem improves on \cite[Proposition 3.3]{CO} by providing an explicit expression for the nonsymmetric $q$-Whittaker function. This new expression is manifestly $q$-positive and carries representation-theoretic meaning: in simply-laced types, one may use \eqref{E:mac-glob} to state the following theorem in terms of graded characters of generalized global Weyl modules.

\begin{thm}\label{T:whitt}
We have $\Omega(Z,X)=(\ga(Z)\ga(X))^{-1}\sum_{\si\in W}\Omega(Z,X)_\si\,e_\si$ where
\begin{align}
\Omega(Z,X)_\si = \sum_{\la\in X_-} q^{\frac{(\la,\la)}{2}} Z^{-\la} \frac{E_\la^\si(X;q,0)}{\langle E_\la,E_\la\rangle_{v=0}}
\end{align}
\end{thm}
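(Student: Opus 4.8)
The plan is to unwind the definition of $\Omega(Z,X)$ in \eqref{E:NSWhitt-def-Xi} by expanding the series part $\Xi(Z,X)$ and carefully tracking the effect of the operators $\Gamma_{-k\rho}^Z\si_Z^{-1}$ on the $Z$-variables, then passing to the limit $v=0$. First I would write
\[
\Xi(Z,X) = \sum_{\mu\in X} q^{\frac{(\mu,\mu)}{2}} v^{-(\mu_-,2\rho)}\,\frac{E_\mu(Z)^* E_\mu(X)}{\langle E_\mu,E_\mu\rangle}
\]
and apply $\Gamma_{-k\rho}^Z\si_Z^{-1}$ to each term. The operator $\si_Z^{-1}$ acts on $E_\mu(Z)^*$ as an element of $W\subset\HH$ acting in the $Z$-variables; using the $*$-linearity and \eqref{E:*-T}, one has $\si^{-1}\cdot(E_\mu(Z)^*) = (T_{\si^{-1}}^{-1}\cdot\text{[something]})$—more precisely I would use that $*$ intertwines $T_\si$ and $T_{\si^{-1}}^{-1}$, and reduce everything to the operator $T_{\si_\mu^{-1}\cdots}$ that produces $E_\mu^{\si}$ from $E_\mu$ via \eqref{E:def-E-u}. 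The key computation is that, after applying $\si_Z^{-1}$ and the $q$-shift $\Gamma_{-k\rho}^Z$ (which multiplies $Z^\nu$ by $v^{-(2\rho,\nu)}$), the $Z$-dependence of the $\si$-component collapses, in the $v=0$ limit, onto a single monomial $Z^{-\mu_-}$ with the surviving $q$-power being $q^{(\mu,\mu)/2} = q^{(\mu_-,\mu_-)/2}$.

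Next I would organize the sum over $\mu\in X$ by $W$-orbits, writing $\mu$ in terms of its antidominant representative $\la=\mu_-\in X_-$. The crucial point is the interplay between the normalization factor $v^{-(\mu_-,2\rho)}$ in $\Xi$, the length-dependent power of $v$ built into $E_\mu^\si$ via \eqref{E:def-E-u}, and the shift operator $\Gamma_{-k\rho}^Z$; these should combine so that, for each fixed $\si$ and each $\la\in X_-$, exactly one weight $\mu$ in the orbit $W\la$ contributes a nonzero term in the limit $v\to 0$, and that contribution is $q^{(\la,\la)/2} Z^{-\la} E_\la^\si(X;q,0)/\langle E_\la,E_\la\rangle_{v=0}$. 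Here I would invoke Corollary~\ref{C:TE-supp} (the leading term $[X^{\si(\la)}]E_\la^\si = 1$ and the support condition) together with Proposition~\ref{P:RY-limit-0} to guarantee that $E_\la^\si(X;q,0)$ is well-defined and $q$-positive, and the norm formula \eqref{E:norm-E-0-anti} for the denominator—which is finite and nonzero precisely because $\la$ is antidominant, so $m_\la = t_\la$.

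The main obstacle will be the limiting argument: showing that among all $\mu\in W\la$, only the antidominant one survives at $v=0$, and identifying the precise power of $v$ attached to each. This requires comparing (i) the factor $v^{-(\mu_-,2\rho)} = v^{-(\la,2\rho)}$ (constant on the orbit), (ii) the $v$-degree shift in $E_\mu(Z)^*$ coming from $T_\si$ versus $T_{\si_\mu^{-1}}$, and (iii) the effect of $\Gamma^Z_{-k\rho}$ acting through $v=q^{k/2}$, i.e. multiplication of $Z^\nu$ by $v^{-(2\rho,\nu)}$. One needs the total $v$-exponent of the $Z^{-\mu_-}$-coefficient to be nonnegative for all $\mu$ in the orbit, with equality iff $\mu=\la$; this should follow from standard facts about how the Demazure–Lusztig operators raise $v$-degree (the same mechanism used in the proof of Proposition~\ref{P:RY-limit-inf}, but now in the $Z$-variables and tracking the minimal-length coset representative $\si_\mu$). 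Once this bookkeeping is done, the identity follows by reindexing the surviving terms and reading off \eqref{E:mac-glob}. I would also double-check the edge case where $\la$ has nontrivial stabilizer in $W$, so that $\si_\la\neq\id$ in general and the normalization $v^{-\ell(\si\si_\la^{-1})+\ell(\si_\la^{-1})}$ in \eqref{E:def-E-u} interacts correctly with the orbit sum; this is where a careless computation would go wrong, but Corollary~\ref{C:TE-supp} pins down the relevant leading coefficients.
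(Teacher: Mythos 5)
There is a genuine gap, and it sits exactly where you flagged the ``main obstacle.'' The paper's proof does \emph{not} apply $\Gamma^Z_{-k\rho}\si_Z^{-1}$ directly to the terms $E_\mu(Z)^*E_\mu(X)$ of $\Xi$; it first invokes the self-duality \eqref{E:G-duality} with $\varphi(T_i)=T_i$ to write $G(Z,X)=(T_{\si^{-1}}^{-1})_Z(T_\si)_X\,G(Z,X)$, so that the $\mu$-term of the series becomes $E_\mu^\si(Z;q,v)^*\,E_\mu^\si(X;q,v)$ (the normalizing powers of $v$ from \eqref{E:def-E-u} cancel between the two factors via \eqref{E:*-T}). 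This step is what makes $E_\mu^\si$ appear in the $X$-variables at all. In your version the $X$-part of the $\mu$-term is still $E_\mu(X;q,v)$, which specializes to $E_\mu(X;q,0)$, and no amount of manipulation of the $Z$-coefficient can turn that into $E_\la^\si(X;q,0)$ for a single $\mu$. Relatedly, your attempt to convert $\si_Z^{-1}$ into Hecke operators via \eqref{E:*-T} cannot work: $\si^{-1}$ acts as the plain Weyl-group permutation of monomials $(0,\si^{-1})\in W(\tX)$, which is not equal to $T_{\si^{-1}}^{\pm1}$, and \eqref{E:*-T} only intertwines $*$ with $T$-operators, not with Weyl-group elements.

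The consequence is that your key intermediate claim --- that for each orbit $W\la$ exactly one $\mu$ survives and contributes $q^{(\la,\la)/2}Z^{-\la}E_\la^\si(X;q,0)/\langle E_\la,E_\la\rangle_{v=0}$ --- is false in the formulation you set up. Your positivity computation for the $v$-exponent is fine: writing $\si^{-1}(\nu)-\mu_-=(\si^{-1}(\nu)-\nu_-)+(\nu_--\mu_-)\in Q_+$, only the monomial $Z^{-\mu_-}$ can survive from the $\mu$-term. But its coefficient is then $\lim_{v\to0}\bigl([Z^{\si(\mu_-)}]E_\mu(Z;q,v)\bigr)^*$, and $[Z^{\si(\mu_-)}]E_\mu$ is generically nonzero for \emph{several} non-antidominant $\mu$ in the orbit (Corollary~\ref{C:TE-supp} gives the Kronecker delta only for $[Z^{\si(\mu_-)}]E_\mu^\si$, not for $E_\mu=E_\mu^{\id}$). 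So several orbit members survive, and one would be left having to prove a nontrivial linear identity expressing $E_\la^\si(X;q,0)$ in terms of the $E_\mu(X;q,0)$, $\mu\in W\la$ --- compare the $\fsl_2$ identity $(q)_m\ch\W_{m\om}=q^mE_{-m\om}(X;q,0)+(1-q^m)E_{m\om}(X;q,0)$ in Proposition~\ref{Omega}, where two orbit members genuinely contribute. After the duality rewrite, by contrast, Corollary~\ref{C:TE-supp} gives $[Z^{\si(\mu_-)}]E_\mu^\si=\de_{\mu,\mu_-}$, so exactly the antidominant terms survive and each yields $Z^{-\la}$ on the nose; the remaining ingredient, which you correctly identified, is Proposition~\ref{P:RY-limit-inf} to guarantee that the subleading coefficients of $E_\mu^\si(Z)^*$ stay bounded as $v\to0$.
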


\begin{proof}
We proceed by directly evaluating \eqref{E:NSWhitt-def-Xi}. To this end, we fix $\si\in W$ and consider the $\si$-component of \eqref{E:NSWhitt-def-Xi}. By \eqref{E:G-duality} and \eqref{E:phi-T}, we have
\begin{align}
G(Z,X) = (T_{\si^{-1}}^{-1})_Z\cdot(T_\si)_X\cdot G(Z,X).
\end{align}
Hence
\begin{align*}
&G(Z,X) = (\gamma(Z)\gamma(X))^{-1}\sum_{\lambda\in X} q^{(\lambda,\lambda)/2}v^{-(2\rho,\lambda_-)}\,\frac{(T_{\si^{-1}}^{-1}\cdot E_\lambda(Z)^*)(T_\si\cdot E_\lambda(X))}{\langle E_\lambda, E_\lambda\rangle}.
\end{align*}
Now
\begin{align*}
&(T_{\si^{-1}}^{-1}\cdot E_\lambda(Z)^*)(T_\si\cdot E_\lambda(X))\\
&\qquad=(v^{2\ell(\si\si_\lambda^{-1})-\ell(\si_\lambda^{-1})}T_{\si^{-1}}^{-1}\cdot E_\lambda(Z)^*)(v^{-2\ell(\si\si_\lambda^{-1})+\ell(\si_\lambda^{-1})} T_\si\cdot E_\lambda(X))\\
&\qquad=(v^{-2\ell(\si\si_\lambda^{-1})+\ell(\si_\lambda^{-1})}T_\si\cdot E_\lambda(Z))^*(v^{-2\ell(\si\si_\lambda^{-1})+\ell(\si_\lambda^{-1})} T_\si\cdot E_\lambda(X))\\
&\qquad=E_\la^\si(Z;q,v)^*E_\la^\si(X;q,v)
\end{align*}
where we use \eqref{E:*-T} in the second equality. Therefore, it suffices to show that
\begin{align}\label{E:Z-lim}
\lim_{v\to 0}v^{-(2\rho,\la_-)}\Gamma_{-k\rho}^Z \si_Z^{-1}(E_\la^\si(Z;q,v)^*) = \begin{cases}Z^{-\la_-}&\text{if $\la\in X_-$}\\0&\text{otherwise.}\end{cases}
\end{align}

By Corollary~\ref{C:TE-supp}, for any $\mu$ such that $[Z^{-\mu}]E_\la^\si(Z;q,v)^*\neq 0$, we have $\si^{-1}(\mu)\in\la_-+Q_+$, and $[Z^{-\si(\la_-)}]E_\la^\si(Z;q,v)^*=\de_{\la,\la_-}$. Hence $[Z^{-\si^{-1}(\mu)}]\si^{-1}_Z E_\la^\si(Z;q,v)\neq 0$ implies $\si^{-1}(\mu)\in\la_-+Q_+$, and $[Z^{-\la_-}]\si_Z^{-1}E_\la^\si(Z;q,v)=\de_{\la,\la_-}$. For any $\mu\in X$, let $a_{\la\mu}^\si(q,v)=[Z^{\si^{-1}(\mu)}]\si_Z^{-1}E_\la^\si(Z;q,v)$ and $b_{\la\mu}^\si(q,v)=[Z^{-u^{-1}(\mu)}]u_Z^{-1}E_\la^\si(Z;q,v)^*$. By the definition of $*$, we have $a_{\la\mu}^\si(q,v)=b_{\la\mu}^\si(q^{-1},v^{-1})$. By Proposition~\ref{P:RY-limit-inf}, the limit $\lim_{v\to\infty}a_{\la\mu}^\si(q,v)$ is well-defined. Hence $\lim_{v\to 0}b_{\la\mu}^\si(q,v)$ is well-defined. Now for any $\mu\in X$ such that $\si^{-1}(\mu)\in\la_-+Q_+$, we have
\begin{align*}
\lim_{v\to 0} v^{-(\la_-,2\rho)}\Gamma_{-k\rho}^Z \si_Z^{-1}(b_{\la\mu}^\si(q,v) Z^{-\mu})&= \lim_{v\to 0} v^{(\si^{-1}(\mu)-\la_-,2\rho)}b_{\la\mu}^\si(q,v) Z^{-\si^{-1}(\mu)}\\
&=\begin{cases}Z^{-\la_-}&\text{if $\si^{-1}(\mu)=\la_-=\la$}\\0&\text{otherwise.}\end{cases}
\end{align*}
This proves \eqref{E:Z-lim} and we are done.
\end{proof}

\section{The $\fsl_2$-case}
\label{S:A1}

In this section we work out the main constructions of the paper in the case $\fg=\msl_2$. We write $\om=\om_1$ for the single fundamental weight and $s=s_1$ for the corresponding reflection.

\subsection{Weyl modules}
The generalized global Weyl modules for $\fg=\msl_2$ are of the form $\W_{m\om}$, $m\in\bZ$.
They are defined by the relations
\begin{align*}
e^{-m+1}v&=0,\quad  (f\T t\bC[t])v=0,\quad \text{ for  $m\le 0$},
\end{align*}
and
\begin{align*}
(f\T t)^{m+1}v&=0,\quad e\T \bC[t]v=0,\quad \text{ for $m>0$}.
\end{align*}
If $m\ge 0$, then $\W_{-m\om}$
is isomorphic to the classical global Weyl module $W(m\om)$.
It is known that the algebra $A(m\om)$ acts freely on $\W(m\om)$ and the quotient
is isomorphic to the local Weyl module. In particular, one has
\begin{equation}\label{char}
\ch \W_{-m\om}=\frac{\ch W_{-m\om}}{(q)_m}
\end{equation}
and $(q)_m^{-1}$ is the character of the highest weight space $A(m\om)=A_{-m\om}$.

Let us show that $A_{m\om}\simeq A(m\om)$ for any $m\ge 0$. Indeed, the algebra $\fn^{af}$ is equipped with
an automorphism $I$, such that $I(e\T 1)=f\T t$, $I(e\T 1)=f\T t$, $I(h\T t^k)=-h\T t^k$.
It is easy to see that the $I$-twist of $\W_{m\om}$ is isomorphic to $\W_{-m\om}$.
Hence $A_{-m\om}\simeq A_{m\om}$ and this algebra acts freely on $\W_{m\om}$.

Now let us consider the decomposition procedure of $W_{-m\om}$, $m\ge 0$. We consider three vectors
$v, e^mv, (f\T t)^m e^mv$ in $\W_{-m\om}$, $m\ge 0$. It is easy to see that the following relations hold:
\begin{align*}
(f\T t)^{m+1}(e^mv)&=0,\quad e\T\bC[t](e^m v)=0,\\ (f\T t\bC[t])\left((f\T t)^m e^mv \right)&=0,\quad e^{m+1}\left((f\T t)^m e^mv \right)=0.
\end{align*}
So we have exact sequences for $m\ge 0$:
\begin{gather*}
\W_{m\om}\to {\rm U}(\fn^{af})e^mv\subset \W_{-m\om}\to \W_{(-m+1)\om}\to 0,\\
\W_{-m\om}\to {\rm U}(\fn^{af})(f\T t)^me^mv\subset \W_{m\om}\to \W_{(m-1)\om}\to 0.
\end{gather*}
In order to prove that the leftmost maps are in fact isomorphisms, we compare the characters. In particular, we need to prove that
\begin{equation}\label{dec-m}
\ch \W_{-m\om} = X^{-1} \ch \W_{(-m+1)\om} + \ch \W_{m\om},
\end{equation}
which is implied by
\[
\ch \W_{-m\om} = X^{-1} \ch \W_{(-m+1)\om} + X\ch \W_{(m-1)\om} + q^m\ch \W_{-m\om}.
\]
Using \eqref{char}, this equality is equivalent to
\[
\ch W_{-m\om} = X^{-1} \ch W_{(-m+1)\om} + X\ch W_{(m-1)\om},
\]
which is well known (see \cite{CP}).

One can perform the decomposition procedure for $\W_{m\om}$, $m\ge 0$ and derive the following analogue of \eqref{dec-m}:
\begin{equation}\label{dec+m}
\ch \W_{m\om} = X\ch \W_{(m-1)\om} + q^m\ch \W_{-m\om}.
\end{equation}

\subsection{Nonsymmetric $q$-Whittaker function}\label{NWf}
Here we follow \cite[\S2]{CM2}, but with some changes of variables. In particular, we change $X$ to $Z$ and $\La$ to $X$. We use the vector notation
\begin{align*}
\twovec{f_\id}{f_s} = f_\id e_\id + f_s e_s
\end{align*}
for elements of $\hcF$, and we write $Z=Z^\om$. The nonsymmetric $q$-Whittaker function (where we use $q^{x^2}$ as a substitute for $\gamma(X)$---see below):
\begin{align}\label{unit}
\Omega(Z,X) = q^{x^2}q^{z^2}\Bigg(1+\sum_{m>0}q^{m^2/4}\Bigg(\frac{E_{-m\om}(X;q,0)}{\prod_{i=1}^m(1-q^i)}\twovec{Z^m}{q^m Z^m}&\\
+\frac{E_{m\om}(X;q,0)}{\prod_{i=1}^{m-1}(1-q^i)}&\twovec{0}{Z^m}\Bigg)\Bigg)\notag
\end{align}
satisfies the eigenfunction equations
\begin{align}\label{E:A1-eigen'}
\hY_Z^{-1}\cdot\Omega(Z,X) &= X\cdot\Omega(Z,X)\\
\label{E:A1-eigen}
\hY_Z\cdot\Omega(Z,X) &= X^{-1}\cdot\Omega(Z,X)
\end{align}
where $\hY_Z^{-1}$ and $\hY_Z$ are the $q$-Toda Dunkl operators
\begin{align}\label{E:A1-toda-dunkl'}
\hY_Z^{-1}\cdot\twovec{f_\id}{f_s}&=\twovec{(1-Z^{-2})\Ga(f_\id)+\Ga^{-1}(f_s)}{\Ga^{-1}(f_s)-Z^{-2}\Ga(f_\id)}\\
\label{E:A1-toda-dunkl}
\hY_Z\cdot\twovec{f_\id}{f_s}&=\twovec{\Ga^{-1}(f_\id-f_s)}{\Ga(f_s)-\Ga(\frac{f_s-f_\id}{Z^2})}
\end{align}
acting on the variable $Z$, where $\Ga=\Ga_\om$ and hence $\Ga(Z^m)=q^{m/2}Z^m$. We regard $q^{z^2}$ as a formal function satisfying $\Ga(q^{z^2})=q^{1/4}Zq^{z^2}$ and $\Ga^{-1}(q^{z^2})=q^{1/4}Z^{-1}q^{z^2}$. (These are the same difference equations satisfied by $\gamma(Z)^{-1}$, which is the essential property we need.) Our notation is consistent with the logarithmic notation $Z=q^z$ and with $\Ga$ acting by the shift $z\to z+\frac{1}{2}$.

\subsection{Weyl and Whittaker}
\label{SS:A1-whitt}
In the case $\fg=\fsl_2$ we can see the connection between generalized Weyl modules and nonsymmetric $q$-Whittaker functions directly.

\begin{prop}\label{Omega}
\begin{align}
\Omega(Z,X) &= q^{x^2}q^{z^2}\sum_{m\geq 0} q^{m^2/4}Z^m\twovec{\ch \W_{-m\om}}{\ch \W_{m\om}}
\end{align}
\end{prop}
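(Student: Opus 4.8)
The plan is to compare the proposed formula with the defining expression \eqref{unit} for $\Omega(Z,X)$, reducing everything to the identities \eqref{char}, \eqref{dec-m}, and \eqref{dec+m} established in the previous subsection. Writing the claimed right-hand side componentwise, the $e_\id$-component is $q^{x^2}q^{z^2}\sum_{m\geq 0}q^{m^2/4}Z^m\,\ch\W_{-m\om}$ and the $e_s$-component is $q^{x^2}q^{z^2}\sum_{m\geq 0}q^{m^2/4}Z^m\,\ch\W_{m\om}$. On the other hand, in \eqref{unit} the $e_\id$-component is $q^{x^2}q^{z^2}\bigl(1+\sum_{m>0}q^{m^2/4}\frac{E_{-m\om}(X;q,0)}{(q)_m}Z^m\bigr)$ and the $e_s$-component is $q^{x^2}q^{z^2}\bigl(1+\sum_{m>0}q^{m^2/4}\bigl(q^m\frac{E_{-m\om}(X;q,0)}{(q)_m}+\frac{E_{m\om}(X;q,0)}{(q)_{m-1}}\bigr)Z^m\bigr)$.

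First I would settle the $e_\id$-component. By \eqref{E:mac-glob} specialized to $\fsl_2$ (equivalently, by \eqref{char} together with $\ch W_{-m\om}=E_{-m\om}(X;q,0)$ from \eqref{E:mac-weyl}, noting $\langle E_{-m\om},E_{-m\om}\rangle_{v=0}=(q)_m$ by \eqref{E:norm-E-0-anti}), we have $\ch\W_{-m\om}=E_{-m\om}(X;q,0)/(q)_m$ for $m\geq 0$; for $m=0$ this reads $\ch\W_0=1$, matching the constant term. So the $e_\id$-components agree term by term.

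Next, for the $e_s$-component I would show $\ch\W_{m\om}=q^m\frac{E_{-m\om}(X;q,0)}{(q)_m}+\frac{E_{m\om}(X;q,0)}{(q)_{m-1}}$ for $m>0$, and $\ch\W_0=1$ for $m=0$. Using the $\fsl_2$ version of \eqref{E:mac-glob} we have $\ch\W_{m\om}=E_{m\om}(X;q,0)/(q)_{m-1}$; indeed, by Theorem~\ref{T:struct}(2) with $\si=s$, $\ch\W_{s\cdot(-m\om)}=\ch W_{s(-m\om)}/(q)_{w_0(-m\om)}$ where $w_0(-m\om)=m\om$ has coordinate $m$ so $(q)_{w_0\la}=(q)_m$; however the relevant comparison is with $E_{m\om}^{\,?}$, so the cleaner route is to use the recursions. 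Substituting the already-verified formula $\ch\W_{-m\om}=E_{-m\om}(X;q,0)/(q)_m$ and $\ch\W_{(m-1)\om}$ (by induction on $m$) into \eqref{dec+m}, namely $\ch\W_{m\om}=X\ch\W_{(m-1)\om}+q^m\ch\W_{-m\om}$, gives a recursion that I would match against the known recursion for the $E_{m\om}(X;q,0)$; alternatively, it is cleanest simply to observe that the proposed formula, once written out componentwise, is \emph{literally} \eqref{unit} after rewriting $1+\sum_{m>0}(\cdots)$ as $\sum_{m\geq 0}(\cdots)$ using $\W_0\simeq\C$. So the proof amounts to: (i) recognize that the $e_\id$-component of \eqref{unit} is $\sum_{m\geq0}q^{m^2/4}Z^m\ch\W_{-m\om}$ by \eqref{char} and \eqref{E:mac-weyl}; (ii) recognize the $e_s$-component via the same identities applied to $\W_{m\om}$, using that $\ch\W_{m\om}=E_{m\om}(X;q,0)/(q)_{m-1}+q^m E_{-m\om}(X;q,0)/(q)_m$, which follows from \eqref{dec+m} and \eqref{char} by induction.

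The main obstacle is purely bookkeeping: one must correctly identify the highest-weight-space normalization $(q)_{w_0\la}$ in the $\fsl_2$ case (it is $(q)_m$ for $\W_{-m\om}$ and $(q)_{m}$ again for $\W_{m\om}$ via $A_{m\om}\simeq A_{-m\om}$, \emph{but} the character of $\W_{m\om}$ as written involves $(q)_{m-1}$ because $\ch W_{m\om}$ already carries one factor — so care is needed), and keep the powers $q^{m^2/4}$, $q^{m/2}$ (from $\Ga$) and the shift $q^{(\la,\la)/2}=q^{m^2/2}$ consistent. Concretely, the only nontrivial check is that the $e_s$-component identity $\ch\W_{m\om}=E_{m\om}(X;q,0)/(q)_{m-1}+q^m E_{-m\om}(X;q,0)/(q)_m$ is consistent with \eqref{dec+m}; this I would verify by plugging in and using the classical identity $\ch W_{m\om}=X\ch W_{(m-1)\om}+X^{-1}\ch W_{(-m+1)\om}$ quoted from \cite{CP}. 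Once these matchings are in place, the proposition follows immediately by comparing coefficients of $Z^m$ in each component.
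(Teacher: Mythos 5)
Your reduction of the proposition to a coefficient-wise comparison with \eqref{unit} is the right frame, and your treatment of the $e_\id$-component is correct and identical to the paper's: $\ch\W_{-m\om}=E_{-m\om}(X;q,0)/(q)_m$ by \eqref{E:mac-glob} and \eqref{E:norm-E-0-anti}. The gap is in the $e_s$-component, which is the only nontrivial part. What must be shown there is
\[
(q)_m\,\ch\W_{m\om}=q^m E_{-m\om}(X;q,0)+(1-q^m)E_{m\om}(X;q,0),
\]
and the paper obtains this from $\ch\W_{m\om}=E^s_{-m\om}(X;q,0)/(q)_m$ (that is, \eqref{E:mac-glob} with $\si=s$) together with the explicit evaluation $E^s_{-m\om}(X;q,0)=\lim_{v\to0}v^{-1}T_s\cdot E_{-m\om}(X;q,v)=q^mE_{-m\om}(X;q,0)+(1-q^m)E_{m\om}(X;q,0)$, cf.\ \eqref{E:Tw0} and Proposition~\ref{P:RY-limit-inf}. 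None of the three substitutes you offer closes this step. (a) Your intermediate assertion $\ch\W_{m\om}=E_{m\om}(X;q,0)/(q)_{m-1}$ is false: \eqref{E:mac-glob} produces $E^s_{-m\om}$, not $E_{m\om}$, and the two differ by the term $q^mE_{-m\om}(X;q,0)/(q)_m$ (you partly notice this but never resolve it). (b) The ``known recursion for $E_{m\om}(X;q,0)$'' that would make your induction via \eqref{dec+m} close is never stated; the identity actually needed is $E_{m\om}(X;q,0)=(1-q^{m-1})X\,E_{(m-1)\om}(X;q,0)+q^{m-1}X\,E_{-(m-1)\om}(X;q,0)$, which is true but is not among the quoted facts and requires its own proof. (c) The ``classical identity'' you fall back on, $\ch W_{m\om}=X\ch W_{(m-1)\om}+X^{-1}\ch W_{(-m+1)\om}$, is a misquotation of the one in the paper (which has $\ch W_{-m\om}$ on the left) and is false as written: already for $m=1$ one has $\ch W_{\om}=X+qX^{-1}\neq X+X^{-1}$. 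Finally, the claim that the proposed formula is ``literally'' \eqref{unit} after reindexing is exactly what fails for the $e_s$-component.

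Your inductive strategy via \eqref{dec+m} could be made rigorous if you supply and prove the displayed recursion for $E_{m\om}(X;q,0)$ (for instance from the intertwiner relations at $v=0$, or from the Ram--Yip formula), but as written the argument rests on an unproved identity and a false one. The paper's route through $T_s$ and the $v\to0$ limit avoids the induction entirely.
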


\begin{proof}
This is a consequence of the identities ($m\ge 0$)
\begin{align}
(q)_m\ch \W_{-m\om} &= E_{-m\om}(X;q,0)\\
(q)_m \ch \W_{m\om} &= E_{-m\om}(X^{-1};q^{-1},\infty)\\
&= \lim_{v\to 0} v^{-1}T_s\cdot E_{-m\om}(X;q,v)\notag\\
&= q^m E_{-m\om}(X;q,0)+(1-q^m)E_{m\om}(X;q,0).\notag\qedhere
\end{align}
\end{proof}

The eigenfunction equations \eqref{E:A1-eigen'} and \eqref{E:A1-eigen} translate to the following relations among the characters of generalized Weyl modules. These are proved by expanding the action of the operators $\hY^{-1}_Z$ and $\hY_Z$ on $\Om(Z,X)$ and equating the coefficients of $Z^m$ on both sides.

\begin{cor}\label{rr}
The graded characters of generalized global Weyl modules satisfy the following recurrence relations
\begin{align}
X\ch \W_{-m\om} &= \ch \W_{(-m+1)\om} - q^{m+1}\ch \W_{-(m+1)\om} +\ch \W_{(m+1)\om}\label{1}\\
X\ch \W_{m\om} &= \ch \W_{(m+1)\om} - q^{m+1}\ch \W_{-(m+1)\om}\label{2}\\
X^{-1}\ch \W_{-m\om} &= \ch \W_{-(m+1)\om} - \ch \W_{(m+1)\om}\label{3}\\
X^{-1}\ch \W_{m\om} &= \ch \W_{(m-1)\om} - q^m (\ch \W_{(m+1)\om}- \ch \W_{-(m+1)\om}).\label{4}
\end{align}
\end{cor}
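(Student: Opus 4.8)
The plan is to derive the four relations by substituting the closed form of $\Omega(Z,X)$ from Proposition~\ref{Omega} into the eigenfunction equations \eqref{E:A1-eigen'} and \eqref{E:A1-eigen} and then comparing coefficients of $Z^m$ in each of the $\id$- and $s$-components. Write $a_m=\ch\W_{-m\om}$ and $b_m=\ch\W_{m\om}$; these do not involve $Z$, and by Proposition~\ref{Omega} the $\id$- and $s$-components of $\Omega(Z,X)$ are, up to the common factor $q^{x^2}q^{z^2}$, the series $\sum_{m\ge0}q^{m^2/4}a_mZ^m$ and $\sum_{m\ge0}q^{m^2/4}b_mZ^m$. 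The only computational input is the behaviour of the shift operators on these monomials: since $\Ga(q^{z^2})=q^{1/4}Zq^{z^2}$, $\Ga(Z^m)=q^{m/2}Z^m$ and $q^{m^2/4}\cdot q^{1/4+m/2}=q^{(m+1)^2/4}$, the operator $\Ga$ carries the term $q^{m^2/4}Z^m$ (inside $q^{z^2}(\cdots)$) to $q^{(m+1)^2/4}Z^{m+1}$, and symmetrically $\Ga^{-1}$ carries it to $q^{(m-1)^2/4}Z^{m-1}$; also $\Ga(Z^{-2}g)=q^{-1}Z^{-2}\Ga(g)$.

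First I would treat \eqref{E:A1-eigen'}: applying $\hY_Z^{-1}$ through \eqref{E:A1-toda-dunkl'} and the shift rule, the $\id$-component of $\hY_Z^{-1}\cdot\Omega(Z,X)$ becomes $q^{x^2}q^{z^2}$ times $(1-Z^{-2})\sum_{m\ge0}q^{(m+1)^2/4}a_mZ^{m+1}+\sum_{m\ge0}q^{(m-1)^2/4}b_mZ^{m-1}$. After reindexing, matching the coefficient of $Z^m$ against the $\id$-component of $X\cdot\Omega(Z,X)$, namely $X\sum_{m\ge0}q^{m^2/4}a_mZ^m$, produces $a_{m-1}-q^{m+1}a_{m+1}+b_{m+1}=Xa_m$, which is \eqref{1}. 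The $s$-component of the same equation, handled identically, produces $b_{m+1}-q^{m+1}a_{m+1}=Xb_m$, which is \eqref{2}. A convenient consistency check is that the would-be $Z^{-1}$-coefficients cancel, because $a_0=b_0=\ch\W_0=1$.

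Then I would repeat this for \eqref{E:A1-eigen} using \eqref{E:A1-toda-dunkl}: the $\id$-component gives $a_{m+1}-b_{m+1}=X^{-1}a_m$, which is \eqref{3}, and the $s$-component gives $b_{m-1}-q^m(b_{m+1}-a_{m+1})=X^{-1}b_m$, which is \eqref{4}. (One could instead note that \eqref{2} and \eqref{3} are just reindexed forms of \eqref{dec+m} and \eqref{dec-m}, and that \eqref{1} and \eqref{4} then follow by combining neighbouring instances of \eqref{2} and \eqref{3}; but the direct Dunkl-operator computation is the most transparent route.) I do not expect any genuine difficulty: the whole argument is a finite, essentially mechanical manipulation of formal series in $Z$. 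The one place that needs care is the Gaussian bookkeeping---tracking how the factor $q^{m^2/4}$ combines with the half-integer exponents coming from $\Ga^{\pm1}$---together with checking that the boundary coefficients at $Z^{-1}$ and $Z^0$ are consistent with the claimed relations (this last point is what restricts \eqref{1} and \eqref{4} to $m\ge1$).
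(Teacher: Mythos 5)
Your proposal is correct and is exactly the paper's argument: the text preceding Corollary~\ref{rr} proves it by expanding $\hY_Z^{-1}$ and $\hY_Z$ on the expression for $\Omega(Z,X)$ from Proposition~\ref{Omega} and equating coefficients of $Z^m$ componentwise. Your Gaussian bookkeeping ($\Gamma$ sending $q^{z^2}q^{m^2/4}Z^m$ to $q^{z^2}q^{(m+1)^2/4}Z^{m+1}$) and the boundary observations at $Z^{-1}$ and $Z^0$ (which force $m\ge 1$ in \eqref{1} and \eqref{4}) are accurate and fill in the details the paper leaves implicit.
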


\begin{rem}
We note that \eqref{2} and \eqref{3} imply \eqref{1} and \eqref{4}. Moreover, \eqref{2} is equivalent to \eqref{dec+m}
and \eqref{3} is equivalent to \eqref{dec-m}.
\end{rem}


\begin{thebibliography}{FFJMM}

\bibitem[A]{A}
P.~Alexandersson.
Non-symmetric Macdonald polynomials and Demazure-Lusztig operators.
arXiv:1602.05153

\bibitem[BF1]{BF1}
A.~Braverman and M.~Finkelberg.
Weyl modules and $q$-Whittaker functions.
Mathematische Annalen, 359 (2014), 45--59.

\bibitem[BF2]{BF2}
A.~Braverman and M.~Finkelberg.
Twisted zastava and $q$-Whittaker functions.
arXiv:1410.2365

\bibitem[BFP]{BFP}
F.~ Brenti, S.~Fomin, and A.~ Postnikov.
Mixed Bruhat operators and Yang-Baxter equations for Weyl groups.
Internat. Math. Res. Notices 1999, no. 8, 419--441.

\bibitem[C]{C}
V.~Chari.
On the fermionic formula and the Kirillov-Reshetikhin conjecture.
Internat. Math. Res. Notices 2001, no. 12, 629--654.

\bibitem[CP]{CP}
V. Chari and A. Pressley.
Weyl modules for classical and quantum affine algebras.
Represent. Theory, 5:191--223 (electronic), 2001.

\bibitem[CFK]{CFK}
V.~Chari, G.~Fourier, and T.~Khandai.
A categorical approach to Weyl modules.
Transform. Groups, 15(3):517--549, 2010.

\bibitem[CI]{CI}
V.~Chari and B.~Ion.
BGG reciprocity for current algebras.
Compositio Mathematica 151 (2015), pp. 1265--1287.

\bibitem[CL]{CL}
V.~Chari and S.~Loktev.
Weyl, Demazure and fusion modules for the current algebra of $\msl_{r+1}$.
Adv. Math. 207 (2006), 928--960.

\bibitem[C1]{Ch-kz}
I.~Cherednik.
Double affine Hecke algebras, Knizhnik-Zamolodchikov equations, and Macdonald's operators.
Internat. Math. Res. Notices 1992, no. 9, 171--180.

\bibitem[C2]{Ch-ann}
I.~Cherednik.
Double affine Hecke algebras and Macdonald's conjectures.
Ann. of Math. (2) 141 (1995), no. 1, 191--216.

\bibitem[C3]{Ch1}
I.~Cherednik.
Nonsymmetric Macdonald polynomials.
Internat. Math. Res. Notices 1995, no. 10, 483--515.

\bibitem[C4]{Ch-mehta}
I.~Cherednik.
Difference Macdonald-Mehta conjecture.
Internat. Math. Res. Notices 1997, no. 10, 449--467.

\bibitem[C5]{Ch2}
I.~Cherednik.
Double affine Hecke algebras.
London Mathematical Society Lecture Note Series, {319}, Cambridge University Press, Cambridge, 2006.

\bibitem[C6]{Ch-whitt}
I.~Cherednik.
Whittaker limits of difference spherical functions.
Int. Math. Res. Not. IMRN 2009, no. 20, 3793--3842.



\bibitem[CM]{CM2}
I.~Cherednik and X.~Ma.
Spherical and Whittaker functions via DAHA II.
Selecta Math. (N.S.) 19 (2013), no. 3, 819--864.

\bibitem[CO]{CO}
I.~Cherednik and D.~Orr.
Nonsymmetric difference Whittaker functions.
Math. Z. 279 (2015), no. 3--4, 879--938.

\bibitem[E]{E}
P.~Etingof.
Whittaker functions on quantum groups and $q$-deformed Toda operators.
Differential topology, infinite-dimensional Lie algebras, and applications, 9--25,
Amer. Math. Soc. Transl. Ser. 2, 194, Amer. Math. Soc., Providence, RI, 1999.


\bibitem[FFJMM]{FFJMM}
B.~Feigin, E.~Feigin, M.~Jimbo, T.~Miwa, and E.~Mukhin.
Fermionic formulas for eigenfunctions of the difference Toda Hamiltonian.
Lett. Math. Phys. 88 (2009), no. 1-3, 39--77.


\bibitem[FM1]{FM1}
E.~Feigin and I.~Makedonskyi.
Nonsymmetric Macdonald polynomials and PBW filtration: towards the proof of the Cherednik-Orr conjecture.
Journal of Combinatorial Theory, Series A (2015), pp. 60--84.

\bibitem[FM2]{FM2}
E.~Feigin and I.~Makedonskyi.
Weyl modules for $\mathfrak{osp}(1,2)$ and nonsymmetric Macdonald polynomials.
arxiv.1507.01362

\bibitem[FM3]{FM3}
E.~Feigin and I.~Makedonskyi.
Generalized Weyl modules, alcove paths and Macdonald polynomials.
arxiv.1512.03254

\bibitem[FM4]{FM4}
E.~Feigin and I.~Makedonskyi.
Generalized Weyl modules for twisted affine algebras, in preparation.

\bibitem[FL1]{FL1}
G.~Fourier and P.~Littelmann.
Tensor product structure of affine Demazure modules and limit constructions.
Nagoya Math. Journal 182 (2006), 171--198.

\bibitem[FL2]{FL2}
G.~Fourier and P.~Littelmann.
Weyl modules, Demazure modules, KR-modules, crystals, fusion products and limit constructions.
Advances in Mathematics 211 (2007), no. 2, 566--593.


\bibitem[GL]{GL}
S.~Gaussent and P.~Littelmann.
LS galleries, the path model, and MV cycles.
Duke Math. J. 127 (2005), no. 1, 35--88.

\bibitem[GLO]{GLO}
A.~Gerasimov, D.~Lebedev, and S.~Oblezin.
On $q$-deformed $\mathfrak{gl}_{l+1}$-Whittaker function. I.
Comm. Math. Phys. 294 (2010), no. 1, 97--119.

\bibitem[GiL]{GiL}
A.~Givental and Y.~P.~Lee.
Quantum $K$-theory on flag manifolds, finite-difference Toda lattices and quantum groups.
Invent. Math. 151 (2003), no. 1, 193--219.

\bibitem[H]{H}
M.~Haiman.
Cherednik algebras, Macdonald polynomials and combinatorics.
Proceedings of the International Congress of Mathematicians, Madrid 2006, Vol. III, 843--872.

\bibitem[HHL]{HHL}
M.~Haiman, J.~Haglund, and N.~Loehr.
A combinatorial formula for non-symmetric Macdonald polynomials.
Amer. J. Math. { 130}:2 (2008), 359--383.


\bibitem[I1]{I}
B.~Ion.
Nonsymmetric Macdonald polynomials and Demazure characters.
Duke Mathematical Journal {116}:2 (2003), 299--318.

\bibitem[I2]{I-inv}
B.~Ion.
Involutions of double affine Hecke algebras.
Compositio Math. 139 (2003), no.~1, 67--84.

\bibitem[INS]{INS}
M.~Ishii, S.~Naito, and D.~Sagaki.
Semi-infinite Lakshmibai-Seshadri path model for level-zero extremal weight modules over quantum affine algebras.
Adv. Math. 290 (2016), 967--1009.


\bibitem[Kac]{Kac}
V.~Kac.
Infinite dimensional Lie algebras. Third edition.
Cambridge University Press, Cambridge, 1990.

\bibitem[K]{Kato}
S.~Kato. Private communication.









\bibitem[LNSSS1]{LNSSS1}
C.~Lenart, S.~Naito, D.~Sagaki, A.~Schilling, and M.~Shimozono.
A uniform model for Kirillov-€"Reshetikhin crystals I: Lifting the parabolic quantum Bruhat graph.
Int. Math. Res. Not. 2015 (2015), 1848--1901.

\bibitem[LNSSS2]{LNSSS2}
C.~Lenart, S.~Naito, D.~Sagaki, A.~Schilling, and M.~Shimozono.
A uniform model for Kirillov-€"Reshetikhin crystals II: Alcove model, path model, and $P = X$.
arXiv:1402.2203

\bibitem[LNSSS3]{LNSSS3}
C.~Lenart, S.~Naito, D.~Sagaki, A.~Schilling, and M.~Shimozono.
Quantum Lakshmibai-Seshadri paths and root operators.
arXiv:1308.3529

\bibitem[LNSSS4]{LNSSS4}
C.~Lenart, S.~Naito, D.~Sagaki, A.~Schilling, and M.~Shimozono.
A uniform model for Kirillov-Reshetikhin crystals III: Nonsymmetric Macdonald polynomials at $t=0$ and Demazure characters.
arXiv:1511.00465

\bibitem[LP]{LP}
C. Lenart and A. Postnikov.
Affine Weyl groups in K-theory and representation theory.
Int. Math. Res. Not. 2007 (2007), 1--€"65.

\bibitem[Lu]{lu}
G.~Lusztig.
Hecke algebras and Jantzen's generic decomposition patterns.
Advances in Math. 37 (1980), 121--164.


\bibitem[M1]{M2}
I.~G.~Macdonald.
Affine Hecke algebras and orthogonal polynomials, S{\' e}minaire Bourbaki, Vol. 1994/95.
Ast{\' e}risque No. 237 (1996), Exp. No. 797, 4, 189--207.

\bibitem[M2]{M3}
I.~G.~Macdonald.
Affine Hecke algebras and orthogonal polynomials.
Cambridge Tracts in Mathematics, 157. Cambridge University Press, Cambridge, 2003.



\bibitem[NNS]{NNS}
S.~Naito, F.~Nomoto, and D.~Sagaki.
An explicit formula for the specialization of nonsymmetric Macdonald polynomials at $t=\infty$.
arXiv:1511.07005

\bibitem[NS]{NS}
S.~Naito and D.~Sagaki.
Demazure submodules of level-zero extremal weight modules and specializations of Macdonald polynomials.
arXiv:1404.2436

\bibitem[O]{O}
E.~Opdam.
Harmonic analysis for certain representations of graded Hecke algebras.
Acta Math. 175 (1995), no. 1, 75--121.

\bibitem[OS]{OS}
D.~Orr and M.~Shimozono.
Specializations of nonsymmetric Macdonald-Koornwinder polynomials.
arXiv:1310.0279

\bibitem[R]{Ram}
A.~Ram.
Alcove walks, Hecke algebras, spherical functions, crystals and column strict tableaux.
Pure Appl. Math. Q. 2 (2006), no. 4, part 2, 963--1013.

\bibitem[RY]{RY}
A.~Ram and M.~Yip.
A combinatorial formula for Macdonald polynomials.
Advances in Mathematics, vol. 226 (1), 2011, pp. 309--331.

\bibitem[Ru]{R}
S.~N.~M.~Ruijsenaars.
Relativistic Toda systems.
Comm. Math. Phys. 133 (1990), no.~2, 217--247.

\bibitem[Sa]{San}
Y.~Sanderson.
On the Connection Between Macdonald Polynomials and Demazure Characters.
J. of Algebraic Combinatorics, {11} (2000), 269--275.

\bibitem[S]{S}
A.~Sevostyanov.
Quantum deformation of Whittaker modules and the Toda lattice.
Duke Math. J. 105 (2000), no. 2, 211--238.

\bibitem[St1]{Sto-dft}
J.~V.~Stokman.
Difference Fourier transforms for nonreduced root systems.
Selecta Math. (N.S.) 9 (2003), no. 3, 409--494.

\bibitem[St2]{Sto-hc}
J.~V.~Stokman.
The $c$-function expansion of a basic hypergeometric function associated to root systems.
Ann. of Math. (2) 179 (2014), no. 1, 253--299.

\bibitem[St3]{Sto-surv}
J.~V.~Stokman.
Macdonald-Koornwinder polynomials.
arXiv:1111.6112

\end{thebibliography}
\end{document}